\newtheorem{thm}{Theorem}[section]
\newtheorem{lem}[thm]{Lemma}
\theoremstyle{remark}
\newtheorem{rem}{\bf Remark}[section]
\theoremstyle{definition}
\newtheorem{defn}[thm]{Definition}
\numberwithin{equation}{section}
\begin{document}
\title[Full dimensional KAM tori for NLS] 
      {On the existence of full dimensional KAM tori for 1D periodic nonlinear Schr\"odinger equation }

\author[Y.Wu]{Yuan Wu}
\address[Y. Wu]{School of Mathematics and Statistics,
Huazhong University of Science and Technology, Wuhan 430070, China}
\address[Y. Wu]{Hubei Key Laboratory of Engineering Modeling and Scientific Computing,
Huazhong University of Science and Technology, Wuhan 430074, China}
\email{wuyuan@hust.edu.cn}

\date{\today}

\keywords{Almost periodic solution; Full dimensional tori; NLS equation; KAM Theory}


\begin{abstract}
In this paper, we will prove the existence of full dimensional tori for 1-dimensional nonlinear Schr\"odinger equation
\begin{eqnarray}\label{maineq0}
\mathbf{i}u_{t}-u_{xx}+V*u+\epsilon f(x)|u|^{4}u=0,\ x\in\mathbb{T}=\mathbb{R}/2\pi\mathbb{Z},
\end{eqnarray}
with boundary conditions, where $V*$ is the Fourier multiplier, and $f(x)$ is  Gevrey smooth. Here the radius of the invariant tori satisfies a slower decay, i.e.
\[
I_n\sim e^{-2\ln^{\sigma}|n|}, \mbox{as}\ n\rightarrow\infty,
\]
for any $ \sigma> 2, $ which extends results of Bourgain \cite{BJFA2005}  and Cong \cite{cong2024} to the case that the nonlinear perturbation depends explicitly on the space variable $x$.
\end{abstract}

\maketitle

\section{Introduction and main result}
The study of the full dimensional invariant tori for Hamiltonian PDEs has attracted many attentions over the years, cf. e.g., \cite{BBM2014,BBM2016,BA1998,BZ2004,CLSY,G2012,GX2013,NG2007,P1990,WG2011}.\
In this paper, we focus on  the nonlinear Schr\"odinger equation (NLS) with periodic boundary conditions
\begin{equation}\label{maineq}
\textbf{i}u_{t}- u_{xx}+V*u+\epsilon f(x)|u|^4u=0,\  x\in\mathbb{T},
\end{equation}
where $\textbf{i}=\sqrt{-1}$, $V*$ is a Fourier multiplier defined by
\begin{equation}\nonumber
V*u=\sum_{n\in\mathbb{Z}}V_n\widehat{u}_ne^{\textbf{i}nx},\  V_n\in[-1,1],
\end{equation} $f(x)$ is $2\pi$-periodic and gevrey analytic in $x$.\ Note that the basic idea in \cite{B1996,P2002} is to use repeatedly (infinitely many times) the KAM theorem dealing with lower dimensional KAM tori. In a different way, Bourgain \cite{BJFA2005} constructed the full dimensional tori directly, and then was generalized to a slower decay rate by Cong \cite{cong2024}. We also mention that P\"{o}schel in \cite{P2002} proved the existence of full dimensional tori for infinite dimensional Hamiltonian system with spatial structure of short range couplings. The present work aims to prove the existence of the full dimensional tori for such a family of NLS (\ref{maineq}) via classical KAM way in the spirits of Bourgain \cite{BJFA2005} and  Cong \cite{cong2024}.

The groundbreaking work of Bourgain \cite{BJFA2005} was to treat all Fourier modes at once under Diophantine conditions. See the nonresonant conditions (\ref{005}) for the details, which is similar as the one given in \cite{BJFA2005}. It is well known that the core of KAM theory is how to
deal with small divisor. Note that the conditions (\ref{005}) is totally different from the nonresonant
conditions used to construct the low dimensional tori, since the factors $ n^{4}$ appears in the
denominator, which causes a much worse small denominator problem. Besides this, Bourgain constructed
a KAM theorem in Cartesian coordinates while all previous researchers \cite{CW1993,KPB2003,K1987,KB2000,KZ2004} studied in action-angle coordinates.
There are some differences between two coordinates. See \cite{BFN2020} for more comments. This method is based on quantitative analysis of small denominator problem, requiring the momentum conservation and the quadratic growth of the frequencies for 1-dimensional NLS. More precisely, Bourgain made use of two simple but important facts: let $(n_i)$ be a finite set of modes satisfying
$$|n_1|\geq |n_2|\geq \cdots$$
and
\begin{equation}\label{M}
n_1-n_2+n_3-\cdots=0.
\end{equation}
The first observation is that: the first two biggest indices $|n_1|$ and $|n_2|$ can be controlled by other indices unless $n_1=n_2$, i.e.
\begin{equation}\label{M''}
|n_1|+|n_2|\leq \sum_{j\geq 3}|n_j|,
\end{equation}
under the assumption that
\begin{equation}\label{M'}
n^2_1
-n^2_2
+n^2_3
-\cdots=o(1).
\end{equation}
Another key observation is
\begin{equation}\label{M'''}
\sum_{j\geq 1}\sqrt{|n_j|}-2\sqrt{|n_1|}\geq \frac{1}{4}\sum_{j\geq 3}\sqrt{|n_j|}.
\end{equation}
The conditions (\ref{M}) and (\ref{M'}) hold true for 1-dimensional NLS, and the equality (\ref{M'''}) guarantees the KAM iteration works. More than ten years later, Cong-Liu-Shi-Yuan \cite{CLSY} extended Bourgain's results to any $\theta\in (0,1)$ and proved the obtained tori are stable in a sub-exponential long time. In 2021, Biasco-Massetti-Procesi \cite{BMP2021} proved the
existence and linear stability of almost periodic solution for 1-dimensional NLS, which is based on a more geometric point of view.\ Recently, Cong \cite{cong2024} generalized Bourgain's results to a slower decay rate, where the action satisfying $I_n\sim e^{-2\ln^{\sigma}|n|}, \sigma> 2.$ Instead of (\ref{M'''}), Cong  \cite{cong2024} proved another equality
\begin{equation}\label{M''''}
\ln^{\sigma}(x+y)-\ln^{\sigma}x-\frac12\ln^{\sigma}y\leq 0, \quad \mbox{for}\ c(\sigma)\leq y\leq x,
\end{equation}
where $ c(\sigma)$ is a positive constant depending $ \sigma$ only.

A natural question arises: can one establish the full dimensional invariant tori be with a suitable decay without conditions (\ref{M}) or (\ref{M'})? This problem has been solved by Cong-Mi-Shi-Wu \cite{CMSW} and Cong-Yuan \cite{CY2021} with a sub-exponential decay of the action. In this paper, we will discuss the existence of full dimensional KAM tori for equations (\ref{maineq}) with decay rate of \begin{equation}\label{083102}\frac{1}{4}e^{-2r{\ln^{\sigma}|n|}}\leq I_n\leq 4e^{-2r{\ln^{\sigma}|n|}}, n\in\mathbb{Z},\ r>0, \ \sigma>2.
\end{equation}
Written in Fourier modes $ (q_{n})_{n\in \mathbb{Z}}$,\ then (\ref{maineq}) can be rewritten as
\begin{eqnarray}
\label{002} \dot{q}_{n}= \mathbf{i}\frac{\partial H}{\partial \overline{q}_{n}}
\end{eqnarray}
with the Hamiltonian
\begin{equation}
\label{003}
H(q,\overline{q})=\sum_{n\in\mathbb{Z}}(n^{2}+V_{n})|q_{n}|^{2}
+\epsilon\sum_{n\in\mathbb{Z}}\underset{n_{1}-n_{2}+n_{3}-n_{4}+n_{5}-n_{6}=-n}{\sum}
\widehat{f}(n)q_{n_{1}}\overline{q}_{n_{2}}q_{n_{3}}\overline{q}_{n_{4}}q_{n_{5}}\overline{q}_{n_{6}}.
\end{equation}
Note that the condition (\ref{M}) is no longer valid for the Hamiltonian (\ref{003}).\ But if the function $f(x)$ is Gevrey smooth with $\mu>0$, i.e.
\begin{align}\label{M''}
|\widehat{f}(n)|\leq Ce^{-\mu\ln^{\sigma}|n|},\ \mu>0,\ \sigma>2,
\end{align}
then $|n_1|+|n_2|$ can be controlled by $\sum_{j\geq 3}|n_j|+|n|$ and the property (\ref{M''}) can also guarantee the KAM iteration works.

To state our result precisely, we will give some definitions firstly.
\begin{defn}
Denote $ \|x\| = \mathrm{dist} (x,\mathbb{Z})$. A vector $\omega=(\omega_n)_{n\in\mathbb{Z}}$ is called to be Diophantine if there exists a real number $ \gamma > 0 $ such that the following resonance issues
\begin{eqnarray}
\label{005} \left\| \sum_{n\in \mathbb{Z}}l_{n}\omega_{n}\right\|\geq \gamma \prod_{n\in \mathbb{Z}}\frac{1}{1+l^{2}_{n}\langle n\rangle^{4}}
\end{eqnarray}
hold, where $ 0 \neq l = (l_{n})_{n\in \mathbb{Z}}$ is a finitely supported sequence of integers and
\[
\langle n\rangle= \max\{ 1,|n|\}.
\]
\end{defn}
\begin{thm}\label{Thm}
Given $r>0$, $\sigma > 2$ and a Diophantine vector $ \omega = (\omega_{n})_{n\in\mathbb{Z}}$ satisfying $
\sup_{n}|\omega_{n}| < 1$,\ then for any $\mu>2r$, sufficiently small $ \epsilon > 0 $ and some appropriate $V$, (\ref{maineq}) has a full dimensional invariant torus $ \mathcal{E}$ with amplitude in $ \mathfrak{H}_{r,\infty}$ satisfying:\\
\begin{itemize}
\item[(1)] the amplitude of $ \mathcal{E}$ is restricted as
\begin{equation*}
\frac14e^{-2r\ln^{\sigma}\lfloor n\rfloor}\leq I_n\leq 4e^{-2r\ln^{\sigma}\lfloor n\rfloor}, \forall n,
\end{equation*}
where
$$
\lfloor n\rfloor=\max\{c(\sigma), |n|\};
$$
\item[(2)] the frequency on  $ \mathcal{E}$ was prescribed to be $(n^2+\omega_n)_{n\in\mathbb{Z}}$;\\
\item[(3)] the invariant torus  $ \mathcal{E}$ is linearly stable.
\end{itemize}
\end{thm}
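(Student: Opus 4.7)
The plan is to execute a classical KAM iteration in Cartesian (Fourier) coordinates on the Hamiltonian (\ref{003}), in the spirit of Bourgain \cite{BJFA2005} and Cong \cite{cong2024}, with the new element being the treatment of the missing momentum conservation caused by the explicit dependence of $f(x)$ on $x$. First I would fix the weighted space $\mathfrak{H}_{r,\infty}$ in which the candidate torus has amplitude $|q_n|^2 = I_n \sim e^{-2r \ln^{\sigma}\lfloor n\rfloor}$ and rewrite (\ref{003}) as $H = N + \epsilon P$, where $N = \sum_n (n^2 + V_n)|q_n|^2$ is the initial normal form and $P$ is the sextic perturbation. As usual in this type of KAM result, the Fourier multiplier $V$ will be determined at the end by a measure-type argument so that the frequencies $\omega_n = n^2 + V_n$ satisfy the Diophantine condition (\ref{005}).

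At each step $s$, given $H^{(s)} = N^{(s)} + P^{(s)}$ with $N^{(s)}$ a normal form (integrable plus action-dependent correction), I would solve the homological equation
\[
\{N^{(s)}, F^{(s)}\} + P^{(s)} = [P^{(s)}] + P^{(s)}_{\mathrm{trunc}},
\]
where $[P^{(s)}]$ is the resonant (diagonal-in-action) part absorbed into $N^{(s+1)}$ and $P^{(s)}_{\mathrm{trunc}}$ is a high-order remainder. The small divisors for the non-resonant monomials $\prod_n q_n^{a_n}\overline q_n^{b_n}$ with $l_n = a_n - b_n$ are controlled by (\ref{005}) at the cost of a factor $\prod_n (1 + l_n^2 \langle n\rangle^4)$. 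The symplectic flow of $F^{(s)}$ then transforms $H^{(s)}$ into $H^{(s+1)} = N^{(s+1)} + P^{(s+1)}$, and the inductive bookkeeping of \cite{BJFA2005,cong2024} has to be redone with the new weight $e^{-r\ln^{\sigma}\lfloor n\rfloor}$ and, crucially, the new combinatorics dictated by $\widehat f$.

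The heart of the proof is the following replacement of (\ref{M''}): every sextic seed carries a factor $\widehat f(n)$ with $n_1 - n_2 + n_3 - n_4 + n_5 - n_6 = -n$, so ordering $|n_1|\ge |n_2|\ge\cdots$ and imposing the second-order smallness $n_1^2 - n_2^2 + \cdots = o(1)$ forced by the small divisors yields, in the non-trivial case $n_1\ne n_2$,
\[
|n_1| + |n_2| \le \sum_{j\ge 3}|n_j| + |n|.
\]
The borrowed $|n|$ is exactly compensated by the Gevrey decay $|\widehat f(n)|\le C e^{-\mu\ln^{\sigma}|n|}$: pairing this factor with the weight ratio $e^{-r\ln^{\sigma}\lfloor n_1\rfloor + r\ln^{\sigma}\lfloor n_2\rfloor}$ and invoking Cong's inequality (\ref{M''''}), the condition $\mu > 2r$ gives the decisive positive margin $\mu - 2r > 0$ that closes the estimate. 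Tracking this through a generic iterated commutator $\{F^{(s_k)},\{\cdots,P^{(0)}\}\cdots\}$ requires that each newly generated momentum defect be charged to a fresh Gevrey factor from $\widehat f$; this is the combinatorial ``momentum budget'' one has to verify balances at every step.

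Once the key estimate is in place, the standard machinery from \cite{BJFA2005,cong2024} applies: one chooses a sequence of truncation orders and parameter excisions to produce super-exponential decay of $P^{(s)}$, obtains convergence of the composed symplectic transformations in $\mathfrak{H}_{r,\infty}$, and reads off the invariant torus $\mathcal{E}$ with the prescribed amplitude range and frequencies $(n^2+\omega_n)$; linear stability follows from the resulting normal-form structure around $\mathcal{E}$. The main obstacle will be the third paragraph above: verifying that the Gevrey exponent $\mu > 2r$ is \emph{exactly} the right threshold to absorb the iteratively generated momentum defects, and that the combinatorial accounting of factors $\widehat f(n)$ versus weight losses remains consistent across arbitrarily deep Poisson-bracket trees. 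The remaining steps, though technically demanding, are adaptations of the framework already developed in \cite{BJFA2005,cong2024}.
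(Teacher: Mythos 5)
Your high-level plan matches the paper's strategy, and you correctly isolate the new combinatorial difficulty: the broken momentum conservation $m(k,k') = \sum_n (k_n-k_n')n \neq 0$ must be paid for by the Gevrey decay of $\widehat{f}$. You also correctly anticipate the dichotomy $|n_1|+|n_2| \le \sum_{j\ge 3}|n_j| + |n|$ from the small-divisor constraint $\bigl|\sum(k_n-k_n')(n^2+\widetilde V_n)\bigr|\le 1$, and that Cong's superadditivity inequality (\ref{122201}) is the right scalar tool. However, the thing you flag as ``the main obstacle'' --- consistent bookkeeping of ``factors $\widehat f(n)$ versus weight losses'' across arbitrary Poisson-bracket trees --- is precisely what your proposal leaves open, and charging each momentum defect in a deep commutator tree to ``a fresh Gevrey factor'' does not close on its own, because the iteration is not expressed as a fixed finite tree.

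The paper's resolution, which is the genuinely non-routine step here, is to build the momentum budget into the Banach-space norm from the outset. Instead of one decay exponent, it uses the two-parameter norm (\ref{H00}),
\[
\|R\|_{\rho,\mu}=\sup_{a,k,k'}\frac{|B_{akk'}|}{\exp\bigl\{\rho\sum_n(2a_n+k_n+k'_n)\ln^{\sigma}\lfloor n\rfloor - 2\rho\ln^{\sigma}\lfloor n_1^*\rfloor - \mu\ln^{\sigma}\lfloor m^*(k,k')\rfloor\bigr\}},
\]
so that $\mu$ literally measures how much Gevrey decay in the momentum defect $m^*(k,k')$ a given Hamiltonian has already spent. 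Because momenta add under brackets, $m(\kappa,\kappa')=m(k,k')+m(K,K')$, and $\ln^{\sigma}$ is subadditive on $[c(\sigma),\infty)$, the $\mu$-weight is automatically controlled in the Poisson-bracket estimate Lemma \ref{H3}, with a small parameter loss $\rho\mapsto\rho-\delta$, $\mu\mapsto\mu+2\delta$ per bracket; this turns the infinite tree into a self-consistent one-step estimate. The other missing piece is the specific lower bound (\ref{H2}) of Lemma \ref{H1},
\[
\sum_n(2a_n+k_n+k'_n)\ln^{\sigma}\lfloor n\rfloor - 2\ln^{\sigma}\lfloor n_1^*\rfloor + \ln^{\sigma}\lfloor m^*(k,k')\rfloor \geq \tfrac12\sum_{i\ge 3}\ln^{\sigma}\lfloor n_i^*\rfloor,
\]
which is what guarantees a positive margin in every bracket and in the small-divisor analysis (Lemma \ref{a1}); without it the inequality $|n_1|+|n_2|\le\sum_{j\ge3}|n_j|+|n|$ alone does not produce the summable decay that closes the iteration. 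Once this norm and lemma are in hand, the iteration with $\rho_{s+1}=\rho_s+3\delta_s$, $\mu_{s+1}=\mu_s-6\delta_s$, $\sum\delta_s<\infty$ runs essentially as in Cong \cite{cong2024}. One small correction: in the paper the multiplier $V$ is pinned down by an inverse-function-theorem freezing argument (Lemma \ref{E2}, estimates (\ref{119})--(\ref{122})), not by measure excision.
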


\section{The norm of the Hamiltonian}
 Let $q=(q_n)_{n\in\mathbb{Z}}$ and its complex conjugate $\bar {q}=(\bar q_n)_{n\in\mathbb{Z}}$. Introduce $I_n=|q_n|^2$ and $J_n=I_n-I_n(0)$ as notations but not as new variables, where $I_n(0)$ will be considered as the initial data. Then the  Hamiltonian (\ref{maineq}) has the form of
\begin{equation*}
H(q,\bar q)=N(q,\bar q)+R(q,\bar q),
\end{equation*}
where
\begin{equation*}
N(q,\bar q)=\sum_{n\in\mathbb{Z}}(n^2+V_n)|q_n|^2,
\end{equation*}
 \begin{equation*}
 R(q,\bar q)=\sum_{a,k,k'\in{\mathbb{N}^{\mathbb{Z}}}}B_{akk'}\mathcal{M}_{akk'}
 \end{equation*}
with
\begin{eqnarray}\nonumber
\mathcal{M}_{akk'}=\prod_{n\in\mathbb{Z}}I_n(0)^{a_n}q_n^{k_n}\bar q_n^{k_n'},
\end{eqnarray}
and $B_{akk'}$ are the coefficients.

Define by
\begin{eqnarray}\label{008}
\mbox{supp}\ \mathcal{M}_{akk'}=\{n:2a_n+k_n+k_n'\neq 0\},
\end{eqnarray}
and define the momentum of $\mathcal{M}_{akk'}$ by
\begin{eqnarray}\label{010}
\mbox{momentum}\ \mathcal{M}_{akk'}:=m(k,k')=\sum_{n\in\mathbb{Z}} (k_n-k_n')n.
\end{eqnarray}
Moreover, denote by
\[
 n^{\ast}_{1}= \max\{|n|: a_{n}+k_{n}+k'_{n} \neq 0\},
\]
and
\[
m^*(k,k')=\left|m(k,k')\right|.
\]
Now we define the norm of the Hamiltonian as follows
\begin{defn}\label{007}
 Given $ \sigma> 2$ and $ r > 0 $,\ we define the Banach space $ \mathfrak{H}_{r,\infty}$  consisting of all complex sequences $ q = (q_{n})_{n\in\mathbb{Z}}$ with
\begin{eqnarray}
\label{H0} \|q \|_{r,\infty}= \sup_{n\in \mathbb{Z}}|q_{n}|e^{r\ln^{\sigma}\lfloor n\rfloor} < \infty.
\end{eqnarray}
\end{defn}

\begin{defn}
For any given $ \rho > 0,\mu > 0$ and $\sigma > 2$, define the norm of the Hamiltonian
$R$ by
\begin{eqnarray}\label{H00}
\| R\|_{\rho,\mu}= \sup_{a,k,k'\in\mathbb{N}^{\mathbb{Z}}}\frac{\left|B_{akk'}\right|}{e^{\rho\sum_{n}(2a_{n}+k_{n}+k_{n}')
\ln^{\sigma}\lfloor n\rfloor
-2\rho\ln^{\sigma}\lfloor n_1^\ast\rfloor-\mu \ln^{\sigma}\lfloor m^\ast(k,k')\rfloor}}.
\end{eqnarray}
\end{defn}

For any $a, k, k\in \mathbb{N}^{\mathbb{Z}}$, denote $(n^{*}_{i})_{i\geq1}$ the decreasing rearrangement of
\[
\{|n| : \mbox{where}\ n\ \mbox{is repeated}\ 2a_{n} + k_{n} + k'_{n}\ \mbox{times}\},
\]
and $ (n_{i})_{i\geq1}$ the system
\[
\{n : \mbox{where}\ n\ \mbox{is repeated}\ 2a_{n} + k_{n} + k'_{n}\ \mbox{times}\},
\]
which satisfies $|n_{1}|\geq|n_{2}|\geq\cdot\cdot\cdot.$

In fact we can prove a positive lower bound of
\[
\sum_{n}(2a_{n}+k_{n}+k_{n}')\ln^{\sigma}\lfloor n\rfloor
-2\ln^{\sigma}\lfloor n_1^\ast\rfloor+\ln^{\sigma}\lfloor m^*{(k,k')}\rfloor,
\]
which is important to overcome the small divisor. Precisely we have the following
lemma:

\begin{lem}\label{H1}
Denote $(n^*_i)_{i\geq1}$ the decreasing rearrangement of
\begin{equation*}
\{|n|:\ \mbox{where $n$ is repeated}\ 2a_n+k_n+k_n'\ \mbox{times}\}.
\end{equation*}
Then for any $ \sigma> 2$, one has
\begin{eqnarray}\label{H2}
\sum_{n\in\mathbb{Z}}(2a_n+k_n+k_n')\ln^{\sigma}\lfloor n\rfloor-2\ln^{\sigma}\lfloor n_1^*\rfloor +\ln^{\sigma}\lfloor m^*{(k,k')}\rfloor\geq\frac{1}{2}\left(\sum_{i\geq 3}\ln^{\sigma}\lfloor n_i^*\rfloor\right).
\end{eqnarray}
\end{lem}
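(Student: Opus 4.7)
\textbf{Proof proposal for Lemma \ref{H1}.} The first move is to rewrite the left-hand side using the decreasing rearrangement: since each $n\in\mathbb{Z}$ contributes $2a_n+k_n+k'_n$ copies of $|n|$ to $(n_i^*)_{i\ge 1}$, one has $\sum_n(2a_n+k_n+k'_n)\ln^\sigma\lfloor n\rfloor=\sum_{i\ge 1}\ln^\sigma\lfloor n_i^*\rfloor$. The claimed inequality is therefore equivalent to
\begin{equation*}
\ln^\sigma\lfloor n_1^*\rfloor \,\le\, \ln^\sigma\lfloor n_2^*\rfloor+\ln^\sigma\lfloor m^*(k,k')\rfloor+\tfrac{1}{2}\sum_{i\ge 3}\ln^\sigma\lfloor n_i^*\rfloor,
\end{equation*}
and the whole argument is devoted to this reformulation.

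The proof then splits according to the multiplicity of the largest modulus. If $n_1^*=n_2^*$, the bound is immediate since all remaining summands on the right are non-negative. Otherwise $n_1^*>n_2^*$, which forces the multiplicity $2a_{n_1}+k_{n_1}+k'_{n_1}$ to equal $1$ (else $n_1^*$ would appear twice) and also precludes $-n_1$ from the support (else $-n_1$ would contribute $n_1^*$ again). Without loss of generality $k_{n_1}=1$ and $a_{n_1}=k'_{n_1}=0$. The momentum identity $m(k,k')=n_1+\sum_{n\ne n_1}(k_n-k'_n)n$ then gives
\begin{equation*}
n_1^*\le m^*(k,k')+\sum_{n\ne n_1}(k_n+k'_n)|n|\le m^*(k,k')+\sum_{i\ge 2}n_i^*,
\end{equation*}
since the multiset $\{|n|:n\ne n_1,\text{ repeated }k_n+k'_n\text{ times}\}$ is a sub-multiset of $\{n_i^*:i\ge 2\}$. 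Passing to $\lfloor\cdot\rfloor$ preserves this: $\lfloor n_1^*\rfloor\le\lfloor m^*(k,k')\rfloor+\sum_{i\ge 2}\lfloor n_i^*\rfloor$.

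The final step converts this additive bound to a $\ln^\sigma$ bound by iterating Cong's inequality (\ref{M''''}). Setting $S_2=\lfloor m^*(k,k')\rfloor+\lfloor n_2^*\rfloor$ and $S_j=S_{j-1}+\lfloor n_j^*\rfloor$ for $j\ge 3$, the monotonicity $\lfloor n_j^*\rfloor\le\lfloor n_2^*\rfloor\le S_{j-1}$ together with $\lfloor n_j^*\rfloor\ge c(\sigma)$ validates the hypothesis of (\ref{M''''}) at every step, yielding
\begin{equation*}
\ln^\sigma\lfloor n_1^*\rfloor\le\ln^\sigma S_N\le\ln^\sigma S_2+\tfrac{1}{2}\sum_{i=3}^N\ln^\sigma\lfloor n_i^*\rfloor.
\end{equation*}
A last application of (\ref{M''''}) to $S_2$ (with whichever of $\lfloor m^*(k,k')\rfloor$, $\lfloor n_2^*\rfloor$ is smaller playing the role of $y$) gives $\ln^\sigma S_2\le\ln^\sigma\lfloor m^*(k,k')\rfloor+\ln^\sigma\lfloor n_2^*\rfloor$, closing the inequality.

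The main obstacle I anticipate is the combinatorial rigidity in the case $n_1^*>n_2^*$: one has to carefully rule out $a_{n_1}\ge 1$, $k_{n_1}=k'_{n_1}=1$, and $-n_1\in\mathrm{supp}$, each of which would spoil $n_1^*>n_2^*$. Once that is pinned down, the momentum identity does all the geometric work, and the analytic content reduces to the iteration of (\ref{M''''}), which is where the hypothesis $\sigma>2$ is ultimately consumed.
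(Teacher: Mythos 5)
Your proposal is correct and follows essentially the same route as the paper: the momentum identity gives $n_1^*\le m^*(k,k')+\sum_{i\ge 2}n_i^*$, and the $\ln^\sigma$ bound then follows by iterating Cong's inequality (\ref{122201}). The only real difference is that your case split on $n_1^*=n_2^*$ versus $n_1^*>n_2^*$ and the accompanying combinatorial rigidity analysis are unnecessary — the paper obtains the same additive momentum bound unconditionally by writing $m(k,k')=\sum_{i\ge 1}\mu_i n_i$ with $\mu_i\in\{-1,1\}$ (assigning cancelling signs to the $a_n$-pairs), after which the iteration proceeds identically in all cases.
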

\begin{proof}  Without loss of generality,\ denote $(n_i)_{i\geq 1},\ |n_1|\geq |n_2|\geq\cdots$, the system $\{\mbox{$ n $ is repeated}\ 2a_n+k_n+k_n'\ \mbox{times}\}$ and we have $n_i^*=|n_i|\ \mbox{for}\ \forall\ i\geq1$. There exists $(\mu_i)_{i\geq1}$ with $\mu_i\in\{-1,1\}$ such that
\begin{equation*}
m(k,k')=\sum_{i\geq1}\mu_in_i,
\end{equation*}
and hence
\begin{equation*}
n_1^*\leq \sum_{i\geq 2}n_i^*+m^*(k,k').
\end{equation*}
Consequently
\begin{equation*}
\ln^{\sigma}\lfloor n_1^*\rfloor\leq\ln^{\sigma}\left(\sum_{i\geq 2}\lfloor n_i^*\rfloor+\lfloor m^*(k,k')\rfloor\right).
\end{equation*}
Thus the inequality (\ref{H2}) will follow from the inequality
\begin{equation}\label{0001}
\sum_{i\geq 2}\ln^{\sigma}\lfloor n_i^*\rfloor+ \ln^{\sigma}\lfloor m^*(k,k')\rfloor\geq \ln^{\sigma} \left(\sum_{i\geq 2}\lfloor n_i^*\rfloor+\lfloor m^*(k,k')\rfloor\right)+\frac{1}{2}\left(\sum_{i\geq 3}\ln^{\sigma}\lfloor n_i^*\rfloor\right).
\end{equation}
By iteration and in view of (\ref{122201}), one obtains
\begin{eqnarray*}
&&\sum_{i\geq 2}\ln^{\sigma}\lfloor n_i^*\rfloor+\ln^{\sigma}\lfloor m^*(k,k')\rfloor\\
&\geq&\ln^{\sigma}\left(\sum_{i\geq 2}\lfloor n_i^*\rfloor\right)
+\ln^{\sigma}\lfloor m^*(k,k')\rfloor+\frac{1}{2}\left(\sum_{i\geq 3}\ln^{\sigma}\lfloor n_i^*\rfloor\right)\\
&\geq&\ln^{\sigma}\left(\sum_{i\geq 2}\lfloor n_i^*\rfloor+\lfloor m^*(k,k')\rfloor\right)+ \frac{1}{2}\ln^{\sigma}\left(\sum_{i\geq 3}\lfloor n_i^*\rfloor\right),
\end{eqnarray*}
which finishes the proof of (\ref{H2}).
\end{proof}

\begin{lem}\label{H3}\textbf{(Poisson Bracket)}
Let $ \sigma > 2,\rho,\mu>0$ and
$$0<\delta_{1},\delta_{2}< \min\{\frac{1}{4}\rho, 3-2\sqrt{2}\}.$$
Then one has
\begin{equation}\label{H4}
\left|\left|\{H_1,H_2\}\right|\right|_{\rho,\mu}\leq \frac{1}{\delta_{2}}\exp\left\{\frac{300}{\delta_{1}}\exp\left\{\left(\frac{50}{\delta_{1}}
\right)^{\frac{1}{\sigma-1}}\right\}\right\}
||H_1||_{\rho-\delta_{1},\mu+2\delta_{1}}||H_2||_{\rho-\delta_{2},\mu+2\delta_{2}}.
\end{equation}
\end{lem}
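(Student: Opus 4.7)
The plan is to expand the Poisson bracket into its monomial components, compute the exponential weight attached to each resulting monomial, and absorb the combinatorial multiplicities into the strictly negative reservoir supplied by Lemma~\ref{H1}. Writing $H_i=\sum B^{(i)}_{akk'}\mathcal{M}_{akk'}$ and
\[
\{H_1,H_2\}=\mathbf{i}\sum_{n\in\mathbb{Z}}\left(\frac{\p H_1}{\p q_n}\frac{\p H_2}{\p\bar q_n}-\frac{\p H_1}{\p\bar q_n}\frac{\p H_2}{\p q_n}\right),
\]
differentiation gives $\p_{q_n}\mathcal{M}_{akk'}\cdot\p_{\bar q_n}\mathcal{M}_{bll'}=k_nl'_n\,\mathcal{M}_{a+b,k+l-e_n,k'+l'-e_n}$. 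So the coefficient of a generic target monomial $\mathcal{M}_{\tilde a\tilde k\tilde k'}$ in the bracket is a finite sum, over decompositions $\tilde a=a+b$, $\tilde k=k+l-e_n$, $\tilde k'=k'+l'-e_n$ and admissible pairing sites~$n$, of $\mathbf{i}(k_nl'_n-k'_nl_n)B^{(1)}_{akk'}B^{(2)}_{bll'}$.

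Three elementary observations link the exponential weights of the glued pair to those of the factors: (i)~the weighted sum is additive up to the pairing, $\tilde S=S^{(1)}+S^{(2)}-2\ln^{\sigma}\lfloor n\rfloor$; (ii)~since $|n|$ lies in both original supports, $2\ln^{\sigma}\lfloor n\rfloor\leq\ln^{\sigma}\lfloor n_1^{(1)*}\rfloor+\ln^{\sigma}\lfloor n_1^{(2)*}\rfloor$ and $\ln^{\sigma}\lfloor\tilde n_1^{*}\rfloor\leq\max(\ln^{\sigma}\lfloor n_1^{(1)*}\rfloor,\ln^{\sigma}\lfloor n_1^{(2)*}\rfloor)$; (iii)~because $q_n$ and $\bar q_n$ contribute equal and opposite amounts to the momentum, $\tilde m=m^{(1)}+m^{(2)}$ exactly, and the subadditivity (\ref{122201}) yields $\ln^{\sigma}\lfloor\tilde m^{*}\rfloor\leq\ln^{\sigma}\lfloor m^{(1)*}\rfloor+\ln^{\sigma}\lfloor m^{(2)*}\rfloor$. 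Inserting the bounds $|B^{(i)}|\leq\|H_i\|_{\rho-\delta_i,\mu+2\delta_i}e^{E_i}$ and applying (i)--(iii) followed by Lemma~\ref{H1} to each factor separately rearranges the algebra into
\[
E_1+E_2\leq\rho\tilde S-2\rho\ln^{\sigma}\lfloor\tilde n_1^{*}\rfloor-\mu\ln^{\sigma}\lfloor\tilde m^{*}\rfloor-\tfrac{\delta_1}{2}\sum_{i\geq 3}\ln^{\sigma}\lfloor n_i^{(1)*}\rfloor-\tfrac{\delta_2}{2}\sum_{i\geq 3}\ln^{\sigma}\lfloor n_i^{(2)*}\rfloor-\delta_1\ln^{\sigma}\lfloor m^{(1)*}\rfloor-\delta_2\ln^{\sigma}\lfloor m^{(2)*}\rfloor.
\]
The first three terms are exactly what the target norm $\|\cdot\|_{\rho,\mu}$ demands; the remaining four form a strictly negative reservoir.

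The main obstacle is to show that this reservoir swallows both the multiplicity $|k_nl'_n-k'_nl_n|\leq 2(\tilde k_n+1)(\tilde k'_n+1)$ and the full count of decompositions (bounded by $\prod_m(\tilde a_m+1)(\tilde k_m+1)(\tilde k'_m+1)$ multiplied by the range of the pairing index~$n$), with room left to accommodate the explicit prefactor in~(\ref{H4}). The mechanism is to invoke, for any prescribed $\varepsilon>0$, the elementary inequality $j+1\leq e^{\varepsilon\ln^{\sigma}j}$ valid for $j\geq c(\sigma,\varepsilon)$, then to pick $\varepsilon$ equal to a small multiple of $\delta_1$ (resp.\ $\delta_2$) so that the cumulative polynomial cost $\sum_m\varepsilon\ln^{\sigma}\lfloor m\rfloor$ over the support is dominated by the reservoir. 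The restriction $\delta_i<3-2\sqrt 2$ is what keeps the iterated step in (\ref{122201}) valid with enough margin; the restriction $\delta_i<\rho/4$ leaves enough of $\rho$ unused in step~(ii); the inner double-exponential $\exp((50/\delta_1)^{1/(\sigma-1)})$ is precisely the threshold at which the inequality $j+1\leq e^{\varepsilon\ln^{\sigma}j}$ starts to pay off with $\varepsilon\sim\delta_1$; and the $\delta_2^{-1}$ prefactor is the value of the remaining geometric sum over the pairing site~$n$. I expect the most delicate book-keeping to be exactly the borderline case where the pairing site $n$ is comparable in size to $n_1^{(1)*}$ or $n_1^{(2)*}$, since the whole budget in step~(ii) is then spent on matching $\ln^{\sigma}\lfloor n\rfloor$ against $\ln^{\sigma}\lfloor\tilde n_1^{*}\rfloor$, and the reservoir must still beat the combinatorial growth uniformly in the size of the supports.
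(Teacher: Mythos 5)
The proposal correctly identifies the high-level architecture: expand $\{H_1,H_2\}$ into monomials, exploit additivity of the exponential weights and the fact that the pairing site lies in both supports, invoke Lemma~\ref{H1} to extract a negative reservoir of the form $-\tfrac{\delta_i}{2}\sum_{j\geq 3}\ln^{\sigma}\lfloor n_j^{*}\rfloor-\delta_i\ln^{\sigma}\lfloor m^{*}\rfloor$, and reduce to bounding a combinatorial sum. This matches the paper through (\ref{007*})--(\ref{008*}). Two steps, however, are either wrong or missing, and both are essential.

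First, the proposed multiplicity bound $j+1\leq e^{\varepsilon\ln^{\sigma}j}$ applied index-by-index over the support does not control $\prod_{m}(\tilde a_m+1)(\tilde k_m+1)(\tilde k'_m+1)$. At a fixed index $m$ the multiplicity $\tilde a_m$ can be arbitrarily large while $\ln^{\sigma}\lfloor m\rfloor$ is fixed, so the inequality $\tilde a_m+1\leq e^{\varepsilon\ln^{\sigma}\lfloor m\rfloor}$ simply fails. What is needed, and what the paper uses via Lemma~\ref{b1} (inequality (\ref{042807})), is a bound with the multiplicity in the exponent, $(1+a_n^{p})e^{-2\delta a_n\ln^{\sigma}\lfloor n\rfloor}\leq \mathrm{const}$: the reservoir supplies roughly $a_n$ copies of $\ln^{\sigma}\lfloor n\rfloor$ once $a_n$ exceeds $2$, and it is this $a_n$-dependence in the exponent that your version drops.

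Second, and more seriously, you flag the delicate case --- pairing site $j$ comparable to the largest source indices --- but leave it without a mechanism. This is precisely where the paper's proof does the real work, and it splits into cases. When $|j|\leq n_3^{*}$ (Subcase~1.1), one donates part of the unused $\rho$-budget (the hypothesis $\delta_1<\rho/4$ is exactly for this) to push $\ln^{\sigma}\lfloor n_1^{*}\rfloor$ into the reservoir; see (\ref{019})--(\ref{021}). When $j\in\{n_1,n_2\}$ (Subcase~1.2 and Case~2), the reservoir $\sum_{i\geq3}$ contains no contribution from the two largest indices, and the required summability comes from a combinatorial fact that your argument lacks: by Remark~\ref{026}, once $j$, $\{n_i\}_{i\geq3}$, and the momentum $l=m(k,k')$ are specified, the remaining large index among $\{n_1,n_2\}$ is determined by $\sum_i\mu_i n_i=l$. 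This turns the sum over two free large indices into a sum over a bounded index range and a sum over $l$, the latter converging because of $e^{-\delta_1\ln^{\sigma}\lfloor l\rfloor}$. Saying ``the reservoir must still beat the combinatorial growth'' is not a substitute for this re-parameterization; without it the estimate for the decomposition count does not close. Supplying the case split on $j$, the momentum-determination argument, and the corrected multiplicity inequality would fill the gap, but as stated the proposal is not yet a proof.
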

\begin{proof}
Let
\begin{equation*}
H_1=\sum_{a,k,k'} b_{akk'}\mathcal{M}_{akk'}
\end{equation*}
and
\begin{equation*}
H_2=\sum_{A,K,K'} B_{AKK'}\mathcal{M}_{AKK'}.
\end{equation*}
It follows easily that
\begin{equation*}
\{H_1,H_2\}=\sum_{a,k,k',A,K,K'}b_{akk'}B_{AKK'}\{\mathcal{M}_{akk'},\mathcal{M}_{AKK'}\},
\end{equation*}
where
\begin{eqnarray*}
\{\mathcal{M}_{akk'},\mathcal{M}_{AKK'}\}
&=&\frac{1}{2\textbf{i}}\sum_j\left(\prod_{n\neq j}I_n(0)^{a_n+A_n}q_n^{k_n+K_n}\bar{q}_n^{k_n'+K_n'}\right)\\
&&\times\left((k_jK_j'-k_j'K_j)I_j(0)^{a_j+A_j}q_j^{k_j+K_j-1}\bar{q}_j^{k_j'+K_j'-1}\right).
\end{eqnarray*}
Then the coefficient of
\[
\mathcal{M}_{\alpha\kappa\kappa'} :=\prod_{n}I_n(0)^{\alpha_n}q_n^{\kappa_n}\bar{q}_n^{\kappa'_n}
\]
is given by
\begin{equation}\label{006*}
B_{\alpha\kappa\kappa'}= \frac{1}{2\textbf{i}}\sum_{j}\sum_{*}\sum_{**}(k_jK_j'-k_j'K_j)b_{akk'}B_{AKK'},
\end{equation}
where
\begin{equation*}
\sum_{*}=\sum_{a,A \atop a+A=\alpha},
\end{equation*}
and
\begin{equation*}
\sum_{**}=\sum_{\substack{k,k',K,K'\\ \mbox{when}\ n\neq j, k_n+K_n=\kappa_n,k_n'+K_n'=\kappa_n'\\ \mbox{when}\ n=j, k_n+K_n-1=\kappa_n,k_n'+K_n'-1=\kappa_n'}}.
\end{equation*}
To estimate (\ref{H4}), we first note some simple facts:

$\textbf{1}.$ Let
\begin{equation*}
N_1^*=\max\{|n|:A_n+K_n+K_n'\neq0\},
\end{equation*}
and
\begin{equation*}
\nu_1^*=\max\{|n|:\alpha_n+\kappa_n+\kappa_n'\neq0\}.
\end{equation*}
If $j\notin\ \mbox{supp}\ (k+k') \bigcap\ \mbox{supp}\ (K+K')$, then
\begin{equation*}
\frac{\partial\mathcal{M}_{akk'}}{\partial q_j}
\frac{\partial\mathcal{M}_{AKK'}}{\partial \bar{q}_j}-\frac{\partial\mathcal{M}_{akk'}}{\partial \bar{q}_j}
\frac{\partial\mathcal{M}_{AKK'}}{\partial {q}_j}=0.
\end{equation*}
Hence we always assume $j\in\ \mbox{supp}\ (k+k') \bigcap\ \mbox{supp}\ (K+K')$. Therefore one has
\begin{equation}\label{014*}
|j|\leq \min\{n_1^*,N^*_1\}.
\end{equation}
Then the following inequality always holds
\begin{equation}\label{014}
\nu_1^*\leq \max\{n_1^*,N_1^*\},
\end{equation}
Combining (\ref{014*}) and (\ref{014}), one has
\begin{equation*}
\ln^{\sigma}\lfloor j\rfloor+\ln^{\sigma}\lfloor \nu_1^*\rfloor-\ln^{\sigma}\lfloor n_1^*\rfloor-\ln^{\sigma}\lfloor N_1^*\rfloor\leq 0.
\end{equation*}

$\textbf{2}.$ It is easy to see
\begin{eqnarray}
\nonumber\sum_{i\geq 1}\ln^{\sigma}\lfloor n_i^*\rfloor
\nonumber&=&\sum_n(2a_n+k_n+k_n')\ln^{\sigma}\lfloor n\rfloor\\
\nonumber&\geq&\sum_n(2a_n+k_n+k_n')\\
\label{015}&\geq&\sum_n(k_n+k_n')
\end{eqnarray}
and
\begin{eqnarray}
\nonumber\sum_{i\geq 3}\ln^{\sigma}\lfloor N_i^*\rfloor
\nonumber&\geq&\sum_n(2A_n+K_n+K_n')-2\\
\nonumber&\geq&\frac12\sum_n(2A_n+K_n+K_n')\\
\label{016}&\geq&\frac12\sum_{n}(K_n+K_n').
\end{eqnarray}
Based on (\ref{015}) and (\ref{016}), we obtain
\begin{eqnarray}
\sum_{n}(k_n+k_n')(K_n+K_n')\nonumber
&\leq& \left(\sup_{n}(K_n+K_n')\right)\left(\sum_{n}(k_n+k_n')\right)\\
&\leq&\label{017} 2\left(\sum_{i\geq 1}\ln^{\sigma}\lfloor n_i^*\rfloor\right)\left(\sum_{i\geq 3}\ln^{\sigma}\lfloor N_i^*\rfloor\right).
\end{eqnarray}
$\textbf{3}.$ Note that
\begin{equation}\label{012}
\sum_n(2\alpha_n+\kappa_n+\kappa_n')=\sum_n(2a_n+k_n+k_n')+\sum_n(2A_n+K_n+K_n')-2
\end{equation}
and
\begin{eqnarray}
\nonumber&&\sum_n(2\alpha_n+\kappa_n+\kappa_n')\ln^{\sigma}\lfloor n\rfloor\\
 \nonumber&=&\sum_n(2a_n+k_n+k_n')\ln^{\sigma}\lfloor n\rfloor\\
 \label{013}&&+\sum_n(2A_n+K_n+K_n')\ln^{\sigma}\lfloor n\rfloor-2\ln^{\sigma}\lfloor j\rfloor.
\end{eqnarray}
In view of (\ref{016}) and (\ref{012}),\ we have
\begin{eqnarray}\label{009**}
\sum_n(2\alpha_n+\kappa_n+\kappa_n') \leq 2\left(\sum_{i\geq 1}\ln^{\sigma}\lfloor n_i^*\rfloor\right) +2\left(\sum_{i\geq 3}\ln^{\sigma}\lfloor N_i^*\rfloor\right).
\end{eqnarray}
$\textbf{4}.$ It is easy to see
\begin{equation*}
m(\kappa,\kappa')=m(k,k')+m(K,K').
\end{equation*}
Hence,
\begin{equation*}
m^*(\kappa,\kappa')\leq m^*(k,k')+m^*(K,K').
\end{equation*}
Moreover, one has
\begin{equation*}
\ln^{\sigma}\lfloor m^*(\kappa,\kappa'\rfloor\leq \ln^{\sigma}\lfloor m^*(k,k')\rfloor+\ln^{\sigma}\lfloor m^*(K,K')\rfloor.
\end{equation*}
In view of (\ref{H00}) and Lemma \ref{H1}, one has
\begin{eqnarray}
\label{007*} |b_{akk'}|
  &\leq& ||H_1||_{\rho-\delta_{1},\mu+2\delta_{1}}
e^{\rho\sum_{n}(2a_n+k_n+k_n')\ln^{\sigma}\lfloor n\rfloor-2\rho\ln^{\sigma}\lfloor n_1^*\rfloor-\mu \ln^{\sigma}\lfloor m^*(k,k')\rfloor}\nonumber\\
&&
\times
e^{-\frac{1}{2}\delta_{1}
\sum_{i\geq 3}\ln^{\sigma}\lfloor n_i^*\rfloor-\delta_{1} \ln^{\sigma}\lfloor m^*(k,k')\rfloor},
\end{eqnarray}
and
\begin{eqnarray}\label{008*}
|B_{AKK'}|&\leq&||H_2||_{\rho-\delta_{2},\mu+2\delta_{2}}e^{\rho\sum_{n}(2A_n+K_n+K_n')\ln^{\sigma}\lfloor n\rfloor-2\rho\ln^{\sigma}\lfloor N_1^*\rfloor
-\mu \ln^{\sigma}\lfloor m^*(K,K')\rfloor}\nonumber\\
&&
\times
e^{-\frac{1}{2}\delta_{2}\sum_{i\geq 3}\ln^{\sigma}\lfloor N_i^*\rfloor-\delta_{2} \ln^{\sigma}\lfloor m^*(K,K')\rfloor}.
\end{eqnarray}
Substitution of (\ref{007*}) and (\ref{008*}) in (\ref{006*}) gives
\begin{eqnarray*}
|B_{\alpha\kappa\kappa'}|&\leq&\frac{1}{2} ||H_1||_{\rho-\delta_{1},\mu+2\delta_{1}}||H_2||_{\rho-\delta_{2},\mu+2\delta_{2}}
\sum_{j}\sum_{*}\sum_{**}\left\{|k_jK_j'-k_j'K_j|\right.\\
\nonumber &&\times e^{\rho\left(\underset{n}{\sum}(2(a_n+A_{n})+k_n +K_{n}+k_n'+K_{n}')\ln^{\sigma}\lfloor n\rfloor-2\ln^{\sigma}\lfloor n_1^*\rfloor-2\ln^{\sigma}\lfloor N_1^*\rfloor\right)}\\
\nonumber &&\times e^{-\mu (\ln^{\sigma}\lfloor m^{\ast}(k,k')\rfloor+ \ln^{\sigma}\lfloor m^*(K,K')\rfloor)}\\
\nonumber &&\left.\times e^{-\frac{1}{2}\left(\delta_{1}\sum_{i\geq 3}\ln^{\sigma}\lfloor n_i^*\rfloor+\delta_{2}\sum_{i\geq 3}\ln^{\sigma}\lfloor N_i^*\rfloor\right)}\right.\\
\nonumber && \left.\times e^{-\delta_{1} \ln^{\sigma}\lfloor m^*(k,k')\rfloor-\delta_{2} \ln^{\sigma}\lfloor M^*(K,K')\rfloor}\right\}.
\end{eqnarray*}
Then one has
\begin{eqnarray*}
\nonumber \left|\left|\{H_1,H_2\}\right|\right|_{\rho,\mu}
 &\leq& \frac{1}{2}||H_1||_{\rho-\delta_{1},\mu+2\delta_{1}}||H_2||_{\rho-\delta_{2},\mu+2\delta_{2}}\sum_{j}\sum_{*}\sum_{**}\left\{ |k_jK_j'-k_j'K_j|\right.\\
\nonumber &&\left.  \times e^{2\rho(\ln^{\sigma}\lfloor j\rfloor+\ln^{\sigma}\lfloor \nu_1^*\rfloor-\ln^{\sigma}\lfloor n_1^*\rfloor-\ln^{\sigma}\lfloor N_1^*\rfloor)}\right.\\
\nonumber &&\left.\times e^{-\frac{1}{2}\left(\delta_{1}\sum_{i\geq 3}\ln^{\sigma}\lfloor n_i^*\rfloor)+\delta_{2}\sum_{i\geq 3}\ln^{\sigma}\lfloor N_i^*\rfloor\right)}\right.\\
\nonumber && \left.\times e^{-\delta_{1} \ln^{\sigma}\lfloor m^*(k,k')\rfloor-\delta_{2} \ln^{\sigma}\lfloor M^*(K,K')\rfloor}\right\}.
\end{eqnarray*}
To show (\ref{H4}) holds, it suffices to prove
\begin{equation}\label{H5}
I\leq \frac{1}{\delta_{2}}\exp\left\{\frac{300}{\delta_{1}}\exp\left\{\left(\frac{50}{\delta_{1}}
\right)^{\frac{1}{\sigma-1}}\right\}\right\},
\end{equation}
where
\begin{eqnarray*}
I&=& \frac{1}{2}\sum_{j}\sum_{*}\sum_{**}\left\{|k_jK_j'-k_j'K_j|e^{2\rho(\ln^{\sigma}\lfloor j\rfloor+\ln^{\sigma}\lfloor \nu_1^*\rfloor-\ln^{\sigma}\lfloor n_1^*\rfloor-\ln^{\sigma}\lfloor N_1^*\rfloor)}\right.\\
&&\left.\times e^{-\frac{1}{2}\left(\delta_{1}\sum_{i\geq 3}\ln^{\sigma}\lfloor n_i^*\rfloor)+\delta_{2}\sum_{i\geq 3}\ln^{\sigma}\lfloor N_i^*\rfloor\right)}\right.\\
\nonumber && \left.\times e^{-\delta_{1} \ln^{\sigma}\lfloor m^*(k,k')\rfloor-\delta_{2} \ln^{\sigma}\lfloor M^*(K,K')\rfloor}\right\}.
\end{eqnarray*}

Now we will prove the inequality (\ref{H5}) holds:

$\textbf{Case. 1.} \ \nu_1^*\leq N_1^*$.

$\textbf{Subase. 1.1.}\ |j|\leq n_3^*\Rightarrow \lfloor j\rfloor\leq \lfloor n_3^*\rfloor.$

Using $ 0< \delta_{1}< \frac{\rho}{4}$, one has
\begin{eqnarray}\label{019}
e^{2\rho(\ln^{\sigma}\lfloor j\rfloor- \ln^{\sigma}\lfloor n_1^*\rfloor)}
&\leq& e^{\frac{1}{2}\delta_{1}(\ln^{\sigma}\lfloor n_3^*\rfloor-\ln^{\sigma}\lfloor n_1^*\rfloor)},
\end{eqnarray}
Hence one obtains
\begin{eqnarray}
\nonumber &&e^{2\rho(\ln^{\sigma}\lfloor j\rfloor+\ln^{\sigma}\lfloor \nu_1^*\rfloor-\ln^{\sigma}\lfloor n_1^*\rfloor-\ln^{\sigma}\lfloor N_1^*\rfloor)}
e^{-\frac{1}{2}\delta_{1}\sum_{i\geq 3}\ln^{\sigma}\lfloor n_i^*\rfloor}\\
&\leq&\nonumber e^{-\frac{1}{2}\delta_{1}(\ln^{\sigma}\lfloor n_1^*\rfloor+\sum_{i\geq 4}\ln^{\sigma}\lfloor n_i^*\rfloor)}\\
\label{021}&\leq& e^{-\frac{1}{6}\delta_{1}\sum_{i\geq 1}\ln^{\sigma}\lfloor n_i^*\rfloor}.
\end{eqnarray}
\begin{rem}\label{022}
Note that if $j,a,k,k'$ are specified, and then $A,K,K'$ are uniquely determined.
\end{rem}
In view of (\ref{021}), we have
\begin{eqnarray*}
I&\leq&\frac{1}{2}\sum_{j}\sum_{*}\sum_{**}\left\{(k_j+k_j')(K_j+K_j')
e^{-\frac{1}{6}\delta_{1}\sum_{i\geq 1}n_i^*\rfloor}e^{-\frac{1}{2}\delta_{2}\sum_{i\geq 3}\ln^{\sigma}\lfloor N_i^*\rfloor}\right.\\
&\leq&\sum_{a,k,k'}\left(\sum_{i\geq 1}\ln^{\sigma}\lfloor n_i^*\rfloor\right)\left(\sum_{i\geq 3}\ln^{\sigma}\lfloor N_i^*\rfloor\right)e^{-\frac{1}{6}\delta_{1}\sum_{i\geq 1}\ln^{\sigma}\lfloor n_i^*\rfloor}e^{-\frac{1}{2}\delta_{2}\sum_{i\geq 3}\ln^{\sigma}\lfloor N_i^*\rfloor}\\
&&\mbox{(in view of the inequality (\ref{017}))}\\
\nonumber &\leq&\frac{24}{e^{2}\delta_{1}\delta_{2}}\sum_{a,k,k'}e^{-\frac{1}{12}\delta_{1}\sum_{i\geq 1}\ln^{\sigma}\lfloor n_i^*\rfloor}\\
&\leq&\frac{24}{e^{2}\delta_{1}\delta_{2}}\prod_{n\in\mathbb{Z}}
\left({1-e^{-\frac{1}{12}\delta_1\ln^{\sigma}\lfloor n\rfloor}}\right)^{-1}
\left({1-e^{-\frac{1}{12}\delta_{1}\ln^{\sigma}\lfloor n\rfloor}}\right)^{-2}\\
&&\nonumber\mbox{(which is based on (\ref{041809}))}\\
&\leq& \frac{1}{\delta_{2}}\exp\left\{\frac{300}{\delta_{1}}\exp\left\{\left(\frac{50}{\delta_{1}}
\right)^{\frac{1}{\sigma-1}}\right\}\right\},
\end{eqnarray*}
where the last inequality is based on (\ref{122401}).

$\textbf{Subcase. 1.2.}\ j\in\{n_1,n_2\},\ |n_1|=n_1^*,\ |n_2|=n_2^*.$

If $2a_j+k_j+k'_j>2$, then $|j|\leq n_3^*$, we are in $\textbf{Subcase. 1.1.}$. Hence in what follows, we always assume
\begin{equation*}
2a_j+k_j+k'_j\leq2,
\end{equation*}
which implies
\begin{equation}\label{023}
k_j+k_j'\leq 2
\end{equation}
and
\begin{equation}\label{024}
n_2^*>n_3^*.
\end{equation}
From (\ref{023}) and in view of $j\in\{n_1,n_2\}$, it follows that
\begin{eqnarray*}
I\nonumber
\nonumber&\leq&\sum_{a,k,k'}\left\{(K_{n_1}+K'_{n_1}+K_{n_2}+K'_{n_2})\right. \\
\label{025*}&&\left.\times e^{-\frac{1}{2}\left(\delta_{1}\sum_{i\geq 3}\ln^{\sigma}\lfloor n_i^*\rfloor+\delta_{2}\sum_{i\geq 3}\ln^{\sigma}\lfloor N_i^*\right)-\delta_{1}\ln^{\sigma}\lfloor m^*(k,k')\rfloor}\right\}.
\end{eqnarray*}
Since
\begin{equation*}
 K_j+K'_j\leq \kappa_j+\kappa'_j-k_j-k'_j+2\leq\kappa_j+\kappa'_j+2, \forall j,
\end{equation*}
one has
\begin{eqnarray}
\nonumber I\nonumber&\leq& \sum_{a,k,k'}\left\{(\kappa_{n_1}+\kappa'_{n_1}+\kappa_{n_2}+\kappa'_{n_2}+4)\right. \\
\nonumber&&\left.\times e^{-\frac{1}{2}\left(\delta_{1}\sum_{i\geq 3}\ln^{\sigma}\lfloor n_i^*\rfloor+\delta_{2}\sum_{i\geq 3}\ln^{\sigma}\lfloor N_i^*\rfloor\right)-\delta_{1}\ln^{\sigma}\lfloor m^*(k,k')\rfloor}\right\}\\
 &\leq& \nonumber \sum_{a,k,k'}\left\{(\kappa_{n_1}+\kappa'_{n_1}+\kappa_{n_2}+\kappa'_{n_2}+4)\right. \\
\nonumber&&\times e^{-\frac14\delta_{1}\sum_{i\geq 3}\ln^{\sigma}\lfloor n_i^*\rfloor-\delta_{1} \ln^{\sigma}\lfloor m^*(k,k')\rfloor}\  \mbox{ ( based  on  (\ref{009**}) )}\\
&&\nonumber \left.\times e^{-\frac18\delta\sum_{n}(2\alpha_n+\kappa_n+\kappa'_n)}\right\}\\
\label{025} &=& \sum_{l\in\mathbb{Z}}\sum_{a,k,k',\atop{m(k,k')=l}}\left\{(\kappa_{n_1}+\kappa'_{n_1}+\kappa_{n_2}+\kappa'_{n_2}+4)\right. \\
\nonumber&&\times e^{-\frac14\delta_{1}\sum_{i\geq 3}\ln^{\sigma}\lfloor n_i^*\rfloor-\delta_{1}\ln^{\sigma}\lfloor l\rfloor }\\
&&\nonumber \left.\times e^{-\frac18\delta\sum_{n}(2\alpha_n+\kappa_n+\kappa'_n)}\right\},
\end{eqnarray}
where $\delta_{1}\wedge\delta_{2}=\min\{\delta_{1},\delta_{2}\} $.
\begin{rem}\label{026} Obviously, $\{n_1,n_2\}\bigcap \mathrm{supp}\ \mathcal{M}_{\alpha\kappa\kappa'}\neq \emptyset$, and if $n_1$ (resp. $n_2$), $\{n_i\}_{i\geq 3}$ and $m(k,k')=l$  is specified, then $n_2$ (resp. $n_1$) is determined uniquely. Thus $n_1,n_2$ range in a set of cardinality no more than \begin{equation}\label{027}\#\mathrm{supp} \ \mathcal{M}_{\alpha\kappa\kappa'}\leq\sum_{n}(2\alpha_n+\kappa_n+\kappa'_n).
\end{equation}
\end{rem}
Also, if $\{n_i\}_{i\geq 1}$ is given, then $\{2a_n+k_n+k'_n\}_{n\in\mathbb{Z}}$ is specified, and hence $(a,k,k')$ is specified up to a factor of
$$\prod_{n}(1+l_n^2),$$
where
$$l_n=\#\{j:n_j=n\}.$$
Following the inequality (\ref{025}), we thus obtain
\begin{eqnarray}
\nonumber I&\leq& \sum_{l\in\mathbb{Z}}\sum_{\{n_i\}_{i\geq1}}\left\{\prod_{m}(1+l_m^2)(\kappa_{n_1}+\kappa'_{n_1}+\kappa_{n_2}+\kappa'_{n_2}+4) \right.\\
\nonumber&&\left.\times e^{-\frac14\delta_{1}\sum_{i\geq 3}\ln^{\sigma}\lfloor n_i^*\rfloor-\delta_{1} \ln^{\sigma}\lfloor l\rfloor}\cdot e^{-\frac18\delta\sum_{n}(2\alpha_n+\kappa_n+\kappa'_n)}\right\}\\
\nonumber&\leq& 5\sum_{l\in\mathbb{Z}}\sum_{\{n_i\}_{i\geq3}}\left\{\prod_{|m|\leq n_3^*}(1+l_m^2)\left(\sum_{n_1,n_2}(\kappa_{n_1}+\kappa'_{n_1}+\kappa_{n_2}+\kappa'_{n_2}+4)\right)\right. \\
\nonumber&& \times e^{-\frac14\delta_{1}\sum_{i\geq 3}\ln^{\sigma}\lfloor n_i^*\rfloor
-\delta_{1} \ln^{\sigma}\lfloor l\rfloor}\left.\cdot e^{-\frac18\delta\sum_{n}(2\alpha_n+\kappa_n+\kappa'_n)}\right\}\\
&&\nonumber\mbox{(in view of $\prod_{|m|>n^*_1}(1+l_m^2)=1$ and $\prod_{m\in\{n_1,n_2\}}(1+l_m^2)\leq 5$)}\\
\nonumber& \leq & 5\sum_{l\in\mathbb{Z}}\sum_{\{n_i\}_{i\geq3}}\left\{\prod_{|m|\leq n_3^*}(1+l_m^2)\left(\sum_{n}(\kappa_{n}+\kappa'_{n})+4\#\mathrm{supp}\ \mathcal{M}_{\alpha\kappa\kappa'}\right)\right. \\
\nonumber&& \times 5e^{-\frac14\delta_{1}\sum_{i\geq 3}\ln^{\sigma}\lfloor n_i^*\rfloor-\delta_{1} \ln^{\sigma}\lfloor l\rfloor}\left.\cdot e^{-\frac18\delta\sum_{n}(2\alpha_n+\kappa_n+\kappa'_n)}\right\}\\
&&\nonumber\mbox{(the inequality is based on Remark \ref{026})}\\
\nonumber&\leq& \frac{200}{e\delta}\left(\sum_{l\in\mathbb{Z}}e^{-\delta_{1} \ln^{\sigma}\lfloor l\rfloor}\right)\left(\sum_{\{n_i\}_{i\geq3}}\prod_{|m|\leq n_3^*}(1+l_m^2)e^{-\frac14\delta_{1}\sum_{i\geq 3}\ln^{\sigma}\lfloor n_i^*\rfloor}\right)\\
&&\mbox{ (based on (\ref{027}) and (\ref{042805*}))}\nonumber\\
\nonumber&\leq&  \frac{600}{e\delta\delta_{1}}\exp\left\{\left(\frac{2}{\delta_{1}\sigma}
\right)^{\frac{1}{\sigma-1}}\right\}\left(\sum_{\{l_m\}_{|m|\leq n_3^{\ast}}}
e^{-\frac16\delta_{1}\sum_{|m|\leq n_3^\ast}l_m\ln^{\sigma}\lfloor m\rfloor}\right)\\
\nonumber&&\times\sup_{\{l_m\}_{|m|\leq n_3^\ast}}\left(\prod_{|m|\leq n_3^\ast}(1+l_m^2)
e^{-\frac{1}{12}\delta_{1}\sum_{|m|\leq n_3^\ast}l_m \ln^{\sigma}\lfloor m\rfloor^{\theta}}\right)\\
&&\mbox{ (based on (\ref{0418011}))}\nonumber\\
\nonumber&\leq& \frac{600}{e\delta\delta_{1}}\exp\left\{\left(\frac{2}{\delta_{1}\sigma}
\right)^{\frac{1}{\sigma-1}}\right\}
\exp\left\{6\left(\frac{48}{\delta_{1}}\right)^{\frac{1}{\sigma-1}}
\exp\left\{\left(\frac{24}{\delta_{1}}
\right)^{\frac{1}{\sigma}}\right\}\right\}\\
\nonumber&&\times
\left(\prod_{m\in\mathbb{Z}}\frac{1}{1-e^{-\frac16\delta_{1} \ln^{\sigma}\lfloor m\rfloor}}\right)\quad \mbox{ (in view of (\ref{042807}))}\nonumber\\
\nonumber&\leq&\frac{1}{\delta_{2}}\exp\left\{\frac{300}{\delta_{1}}\exp\left\{\left(\frac{50}{\delta_{1}}
\right)^{\frac{1}{\sigma-1}}\right\}\right\},
\end{eqnarray}
where the last inequality is based on (\ref{122401}).

$\textbf{Case. 2.}\ \nu_1^*>N_1^*.$

In view of (\ref{014}), one has $n_1^*=\nu_1^*$. Hence,  $n_2$ is determined by $n_1$, $\{n_i\}_{i\geq 3}$ and the momentum $m(k,k')$. Similar to Case 1.2, we have
\begin{eqnarray*}
I&\leq&\frac{1}{\delta_{2}}\exp\left\{\frac{300}{\delta_{1}}\exp\left\{\left(\frac{50}{\delta_{1}}
\right)^{\frac{1}{\sigma-1}}\right\}\right\}.
\end{eqnarray*}

Therefore,\ we finish the proof of (\ref{H4}).
\end{proof}

\begin{lem}\label{E1}
Let $\sigma>2, \rho>0$ and
\[
0<\delta_1,\delta_2< \min\{\frac{1}{4}\rho, 3-2\sqrt{2}\}.
\]
Assume further \begin{equation}\label{035}
\frac{e}{\delta_{2}}\exp\left\{\frac{300}{\delta_{1}}\exp\left\{\left(\frac{50}{\delta_{1}}
\right)^{\frac{1}{\sigma-1}}\right\}\right\}||F||_{\rho-\delta_1,\mu+2\delta_{1}} \ll 1.
\end{equation}
Then for any Hamiltonian function $H$, we get
\begin{equation}\nonumber
||H\circ\Phi_F||_{\rho,\mu}
\leq\left(1+\frac{e}{\delta_{2}}\exp\left\{\frac{300}{\delta_{1}}\exp\left\{\left(\frac{50}{\delta_{1}}
\right)^{\frac{1}{\sigma-1}}\right\}\right\}
||F||_{\rho-\delta_1,\mu+2\delta_{1}}\right)
||H||_{\rho-\delta_2,\mu+2\delta_{2}}.
\end{equation}
\end{lem}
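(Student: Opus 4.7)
The plan is to expand $H\circ\Phi_F$ in a Lie series and control each term by iterated application of Lemma~\ref{H3}. Writing $\Phi_F$ for the time-$1$ flow of the Hamiltonian vector field of $F$, one has formally
\[
H\circ\Phi_F=\sum_{k\ge 0}\frac{1}{k!}H_k,\qquad H_0=H,\quad H_k=\{H_{k-1},F\},
\]
so it suffices to bound $\|H_k\|_{\rho,\mu}$ for each $k\ge 1$ and then sum.

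For fixed $k$, I introduce interpolating scales $\rho_j=\rho-(j/k)\delta_2$, $\mu_j=\mu+(2j/k)\delta_2$ for $j=0,1,\dots,k$, connecting $(\rho_0,\mu_0)=(\rho,\mu)$ to $(\rho_k,\mu_k)=(\rho-\delta_2,\mu+2\delta_2)$. At the $m$-th step I would apply Lemma~\ref{H3} to estimate $\|H_m\|_{\rho_{k-m},\mu_{k-m}}$ in terms of $\|H_{m-1}\|_{\rho_{k-m+1},\mu_{k-m+1}}$, using the $H$-side shift $\delta_2/k$ and an $F$-side shift $\delta_1^{(m)}$ chosen so that the $F$-input scale coincides with the given $(\rho-\delta_1,\mu+2\delta_1)$. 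With this choice each iteration contributes a factor $(k/\delta_2)\,C(\delta_1)\,\|F\|_{\rho-\delta_1,\mu+2\delta_1}$, where $C(\delta_1)=\exp\{(300/\delta_1)\exp\{(50/\delta_1)^{1/(\sigma-1)}\}\}$, and after $k$ iterations
\[
\|H_k\|_{\rho,\mu}\le\Bigl(\frac{k\,C(\delta_1)}{\delta_2}\|F\|_{\rho-\delta_1,\mu+2\delta_1}\Bigr)^k\|H\|_{\rho-\delta_2,\mu+2\delta_2}.
\]

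Dividing by $k!$ and invoking Stirling's bound $k^k\le e^k k!$ replaces $k^k/k!$ by $e^k$, so summing over $k$ produces a geometric series with ratio
\[
x=\frac{e}{\delta_2}\,\exp\Bigl\{\frac{300}{\delta_1}\exp\bigl\{(50/\delta_1)^{1/(\sigma-1)}\bigr\}\Bigr\}\,\|F\|_{\rho-\delta_1,\mu+2\delta_1}.
\]
The hypothesis \eqref{035} forces $x\ll 1$, so $\sum_{k\ge 0}x^k=(1-x)^{-1}\le 1+x+O(x^2)$, which, after absorbing the $O(x^2)$ term into the leading constant, is exactly the claimed bound.

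The main technical obstacle is the scale bookkeeping in the iteration. Because the norm $\|\cdot\|_{\rho',\mu'}$ is monotone under the shift $(\rho',\mu')\mapsto(\rho'-\delta,\mu'+2\delta)$ (a direct consequence of Lemma~\ref{H1}), one cannot freely evaluate $\|F\|$ at scales deeper than $(\rho-\delta_1,\mu+2\delta_1)$ where it is controlled; this forces one to adjust $\delta_1^{(m)}$ at each step so as to pin the $F$-input scale, while simultaneously verifying that the accompanying $C(\delta_1^{(m)})$ remains comparable to $C(\delta_1)$ across all $k$ iterations. Once this scale management is in place, the Stirling cancellation and the geometric summation deliver the estimate with essentially no further work.
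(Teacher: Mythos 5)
Your argument is essentially the paper's: Lie-series expansion of $H\circ\Phi_F$, iterated application of Lemma~\ref{H3} with the $H$-side loss split into $\delta_2/k$ slices so that the $1/\delta_2$ factors accumulate to $(k/\delta_2)^k$, then $k^k\le e^k\,k!$ and the geometric sum under hypothesis~\eqref{035}. The scale-bookkeeping caveat you flag is genuine, but the paper's own proof is equally terse on this point, asserting the iterated bound with $\|F\|_{\rho-\delta_1,\mu+2\delta_1}$ appearing $n$ times without explaining how the $F$-input scale is kept from drifting deeper through the iteration.
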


\begin{proof}
Firstly,  we expand $H\circ\Phi_F$ into the Taylor series
 \begin{equation}\label{037}
 H\circ\Phi_F=\sum_{n\geq 0}\frac{1}{n!}H^{(n)},
 \end{equation}
where $H^{(n)}=\{H^{(n-1)},F\}$ and $H^{(0)}=H$.

We will estimate $||H^{(n)}||_{\rho,\mu}$ by using Lemma \ref{H3} again and again:
\begin{eqnarray}
\nonumber&&||H^{(n)}||_{\rho,\mu}=||\{H^{(n-1)},F\}||_{\rho,\mu}\\
\nonumber&\leq&\left(\exp\left\{\frac{300}{\delta_{1}}\exp\left\{\left(\frac{50}{\delta_{1}}
\right)^{\frac{1}{\sigma-1}}\right\}\right\}
||F||_{\rho-\delta_1,\mu+2\delta_{1}}\right)^n\left(\frac{n}{\delta_2}\right)^n
||H||_{\rho-\delta_2,\mu+2\delta_{2}}.
\end{eqnarray}
Hence in view of (\ref{037}), one has
\begin{eqnarray*}
&&||H\circ\Phi_F||_{\rho,\mu}\\
\nonumber&\leq&\sum_{n\geq 0}\frac{n^n}{n!}\left(\frac{1}{\delta_{2}}\exp\left\{\frac{300}{\delta_{1}}\exp\left\{\left(\frac{50}{\delta_{1}}
\right)^{\frac{1}{\sigma-1}}\right\}\right\}
||F||_{\rho-\delta_1,\mu+2\delta_{1}}\right)^{n}||H||_{\rho-\delta_2,\mu+2\delta_{2}}\\
&\leq&\sum_{n\geq 0}
\left(\frac{e}{\delta_{2}}\exp\left\{\frac{300}{\delta_{1}}\exp\left\{\left(\frac{50}{\delta_{1}}
\right)^{\frac{1}{\sigma-1}}\right\}\right\}
||F||_{\rho-\delta_1,\mu+2\delta_{1}}\right)^{n}||H||_{\rho-\delta_2,\mu+2\delta_{2}}\\
&&(\mbox{in view of $n^n<n!e^n$})\\
&\leq & \left(1+\frac{e}{\delta_{2}}\exp\left\{\frac{300}{\delta_{1}}\exp\left\{\left(\frac{50}{\delta_{1}}
\right)^{\frac{1}{\sigma-1}}\right\}\right\}
||F||_{\rho-\delta_1,\mu+2\delta_{1}}\right)
||H||_{\rho-\delta_2,\mu+2\delta_{2}}.
\end{eqnarray*}
\end{proof}

Finally, we give the estimate of the Hamiltonian vector field.
\begin{lem}\label{H6}
Given a Hamiltonian
\begin{equation}\label{028}
H=\sum_{a,k,k'\in\mathbb{N}^{\mathbb{Z}}}B_{akk'}\mathcal{M}_{akk'},
\end{equation}
then for any $\mu>r> 5\rho, ||q||_{r,\infty} < 1$ and $ ||I(0)||_{r,\infty} < 1 $,\ one has
\begin{equation}\label{029}
\sup_{j \in \mathbb{Z}}\left|e^{r \ln^{\sigma}\lfloor j\rfloor}\frac{\partial{H}}{\partial q_{j}}\right|\leq C(r,\rho,\mu,\sigma)||H||_{\rho,\mu},
\end{equation}
where $C(r,\rho,\mu,\sigma)$ is a positive constant depending on $r,\rho,\mu$ and $\sigma$ only,\ and
\begin{equation}\label{030}
||I(0)||_{r,\infty} := \sup_{n\in \mathbb{Z}}|I_{n}(0)|e^{2r \ln^{\sigma}\lfloor n\rfloor}.
\end{equation}
\end{lem}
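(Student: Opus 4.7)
The plan is to differentiate the series for $H$ term by term, bound each contribution by combining the norm of $H$ with the amplitude hypotheses $\|q\|_{r,\infty}<1$ and $\|I(0)\|_{r,\infty}<1$, apply Lemma~\ref{H1} to extract decay, and close by a combinatorial summation in the spirit of the proof of Lemma~\ref{H3}.

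Concretely, I would first write
\begin{equation*}
\frac{\partial H}{\partial q_j}=\sum_{a,k,k':\,k_j\geq 1} k_j\, B_{akk'}\, I(0)^a q^{k-e_j}\bar q^{k'},
\end{equation*}
where $e_j$ is the indicator at mode $j$. The pointwise bounds $|q_n|\leq e^{-r\ln^{\sigma}\lfloor n\rfloor}$ and $|I_n(0)|\leq e^{-2r\ln^{\sigma}\lfloor n\rfloor}$ control the monomial factor by $e^{r\ln^{\sigma}\lfloor j\rfloor-r\sum_n(2a_n+k_n+k_n')\ln^{\sigma}\lfloor n\rfloor}$, and the definition of $\|H\|_{\rho,\mu}$ controls $|B_{akk'}|$. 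Multiplying these estimates, inserting the weight $e^{r\ln^{\sigma}\lfloor j\rfloor}$, and using $|j|\leq n_1^{\ast}$ (forced by $k_j\geq 1$), the resulting exponent is
\begin{equation*}
(2r-2\rho)\ln^{\sigma}\lfloor n_1^\ast\rfloor-(r-\rho)\sum_n(2a_n+k_n+k_n')\ln^{\sigma}\lfloor n\rfloor-\mu\ln^{\sigma}\lfloor m^\ast\rfloor.
\end{equation*}
Rescaling Lemma~\ref{H1} by the positive factor $r-\rho$ then dominates this by $-(\mu-r+\rho)\ln^{\sigma}\lfloor m^\ast\rfloor-\tfrac{r-\rho}{2}\sum_{i\geq 3}\ln^{\sigma}\lfloor n_i^\ast\rfloor$, and both coefficients are positive thanks to $\mu>r>5\rho$.

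What remains is to verify that
\begin{equation*}
\sum_{a,k,k':\,k_j\geq 1} k_j\, e^{-(\mu-r+\rho)\ln^{\sigma}\lfloor m^\ast\rfloor-\tfrac{r-\rho}{2}\sum_{i\geq 3}\ln^{\sigma}\lfloor n_i^\ast\rfloor}
\end{equation*}
is bounded uniformly in $j$, which is the main obstacle since the exponent supplies no direct decay in the two leading modes $n_1^\ast, n_2^\ast$ nor in the multiplicities at small-index modes. I would address this exactly as in Subcases~1.1--1.2 of the proof of Lemma~\ref{H3}: split according to whether $|j|\leq n_3^\ast$, in which case the slack furnished by $r>5\rho$ together with the iterated inequality $\ln^{\sigma}(x+y)\leq\ln^{\sigma}x+\tfrac12\ln^{\sigma}y$ of (\ref{M''''}) redistributes enough decay onto every $n_i^\ast$; or $j\in\{n_1,n_2\}$, in which case, given $\{n_i\}_{i\geq 3}$ and the momentum $m$, the other leading index is determined up to an $O(1)$ sign ambiguity by $\pm n_1\pm n_2=m-\sum_{i\geq 3}(\pm n_i)$, so the corresponding sum collapses. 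The number of $(a,k,k')$ producing a given system $\{n_i\}$ is bounded by $\prod_n(1+l_n^2)$ with $l_n=\#\{i:n_i=n\}$, and summing the resulting Gaussian-type products by the elementary estimates (\ref{041809}) and (\ref{122401}) already used in Lemma~\ref{H3} yields a bound of the form $C(r,\rho,\mu,\sigma)$, of the same double-exponential shape as the constants appearing in (\ref{H4}) and (\ref{035}).
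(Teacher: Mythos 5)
Your setup (differentiate term by term, bound the monomial factor using $\|q\|_{r,\infty}<1$ and $\|I(0)\|_{r,\infty}<1$, bound the coefficient using $\|H\|_{\rho,\mu}$) matches the paper's, and so does your treatment of the sub-case $j\in\{n_1,n_2\}$. The problem lies in the other sub-case: you bound $\ln^{\sigma}\lfloor j\rfloor\leq\ln^{\sigma}\lfloor n_1^{\ast}\rfloor$ \emph{before} splitting into cases and then apply Lemma~\ref{H1}, which brings you to the sum
\begin{equation*}
\sum_{a,k,k':\,k_j\geq 1} k_j\, e^{-(\mu-r+\rho)\ln^{\sigma}\lfloor m^{\ast}\rfloor-\tfrac{r-\rho}{2}\sum_{i\geq 3}\ln^{\sigma}\lfloor n_i^{\ast}\rfloor}
\end{equation*}
with no decay whatsoever on the two leading modes. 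But in the sub-case $|j|\leq n_3^{\ast}$ you are now stuck: $j$ is one of the smaller modes $(n_i)_{i\geq 3}$, and with $\{n_i\}_{i\geq 3}$, $m(k,k')$ and $j$ all fixed, there is still a one–parameter family of choices for $(n_1,n_2)$ (the momentum constraint kills one variable, not both), over which your exponent gives no decay at all. That sum diverges; the claim that the slack $r>5\rho$ together with~(\ref{122201}) will ``redistribute enough decay onto every $n_i^{\ast}$'' does not follow from the expression you wrote, because all of the slack from $r>5\rho$ has already been spent when you replaced $\ln^{\sigma}\lfloor j\rfloor$ by $\ln^{\sigma}\lfloor n_1^{\ast}\rfloor$ and invoked Lemma~\ref{H1}.

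The fix—this is what the paper does—is to split into cases \emph{first} and in the case $|j|\leq n_3^{\ast}$ never apply Lemma~\ref{H1}: simply keep the full exponent $(\rho-r)\sum_{i\geq 1}\ln^{\sigma}\lfloor n_i^{\ast}\rfloor+2r\ln^{\sigma}\lfloor j\rfloor-2\rho\ln^{\sigma}\lfloor n_1^{\ast}\rfloor-\mu\ln^{\sigma}\lfloor m^{\ast}\rfloor$, bound $2r\ln^{\sigma}\lfloor j\rfloor\leq\tfrac{2r}{3}(\ln^{\sigma}\lfloor n_1^{\ast}\rfloor+\ln^{\sigma}\lfloor n_2^{\ast}\rfloor+\ln^{\sigma}\lfloor n_3^{\ast}\rfloor)$ (possible precisely because $\lfloor j\rfloor\leq\lfloor n_3^{\ast}\rfloor\leq\lfloor n_2^{\ast}\rfloor\leq\lfloor n_1^{\ast}\rfloor$), and observe that this leaves a coefficient of roughly $\tfrac{1}{3}(-r+3\rho)<0$ in front of \emph{every} $\ln^{\sigma}\lfloor n_i^{\ast}\rfloor$; the sum over all of $a,k,k'$ (and over $l=m(k,k')$) then converges by~(\ref{0418011}), (\ref{041809}) and~(\ref{122401}). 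Lemma~\ref{H1} is only the right tool in the complementary case $j\in\{n_1,n_2\}$, where the combinatorics of fixing $n_1,n_2$ from $\{n_i\}_{i\geq 3}$ and $m$ saves you instead.
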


\begin{proof}
In view of (\ref{028}) and for each $ j \in \mathbb{Z}$,\ one has
\begin{equation*}
\frac{\partial{H}}{\partial q_{j}}=\sum_{a,k,k'}B_{akk'}\left(\prod_{n\neq j}I_{n}(0)^{a_{n}}q_{n}^{k_{n}}
\bar{q}_{n}^{k_{n}'}\right)\left(k_{j}
I_{j}(0)^{a_{j}}q_{j}^{k_{j}-1}\bar{q}_{j}^{k_{j}'}\right).
\end{equation*}
Now we would like to estimate
\begin{equation}\label{031}
\left|e^{r \ln^{\sigma}\lfloor j\rfloor}\frac{\partial{H}}{\partial q_{j}}\right|=\left|e^{r \ln^{\sigma}\lfloor j\rfloor}\sum_{a,k,k'}B_{akk'}\left(\prod_{n\neq j}I_{n}(0)^{a_{n}}q_{n}^{k_{n}}
\bar{q}_{n}^{k_{n}'}\right)\left(k_{j}
I_{j}(0)^{a_{j}}q_{j}^{k_{j}-1}
\bar{q}_{j}^{k_{j}'}\right)\right|.
\end{equation}
Based on (\ref{H00}), one has
\begin{equation}\label{032}
|B_{akk'}|\leq ||H||_{\rho,\mu}e^{\rho(\sum_{n}(2a_{n}+k_{n}+k_{n}') \ln^{\sigma}\lfloor n\rfloor
-2 \ln^{\sigma}\lfloor n_1^\ast\rfloor)-\mu  \ln^{\sigma}\lfloor m^\ast(k,k')\rfloor}.
\end{equation}
In view of $||q||_{r,\infty}<1$ and $ ||I(0)||_{r,\infty} < 1$, one has
\begin{eqnarray}\label{033}
|q_{n}| < e^{-r \ln^{\sigma}\lfloor n\rfloor},
\end{eqnarray}
and
\begin{eqnarray}\label{034}
|I_{n}(0)| < e^{-2r \ln^{\sigma}\lfloor n\rfloor}.
\end{eqnarray}
Substituting (\ref{033}) and (\ref{034}) into (\ref{032}),\ one has
\begin{eqnarray}
|(\ref{032})|
&\leq&\nonumber||H||_{\rho,\mu}\left|e^{r \ln^{\sigma}\lfloor j\rfloor}\sum_{l\in\mathbb{Z}}\sum_{a,k,k',\atop{m(k,k')=l}}\left\{
k_je^{\rho(\underset{n}{\sum}(2a_n+k_n+k_n') \ln^{\sigma}\lfloor n\rfloor-2 \ln^{\sigma}\lfloor n_1^*\rfloor)}\right.\right.\\ \nonumber &&\left.\left.\cdot
e^{-r(\underset{n}{\sum}(2a_n+k_n+k_n') \ln^{\sigma}\lfloor n\rfloor- \ln^{\sigma}\lfloor j\rfloor)-\mu \ln^{\sigma}\lfloor l\rfloor}\right\}\right|\\
&=&\nonumber||H||_{\rho,\mu}\left|\sum_{l\in\mathbb{Z}}\sum_{a,k,k',\atop{m(k,k')=l}}\left\{
k_je^{\rho(\underset{n}{\sum}(2a_n+k_n+k_n') \ln^{\sigma}\lfloor n\rfloor-2 \ln^{\sigma}\lfloor n_1^*\rfloor)}\right.\right.\\ \nonumber &&\left. \left.\cdot
e^{-r(\underset{n}{\sum}(2a_n+k_n+k_n') \ln^{\sigma}\lfloor n\rfloor-2 \ln^{\sigma}\lfloor j\rfloor)-\mu \ln^{\sigma}\lfloor l\rfloor}\right\}\right|.
\end{eqnarray}
Then we only need to estimate
\begin{eqnarray}\label{L1}
&&\left|\sum_{l\in\mathbb{Z}}\sum_{a,k,k',\atop{m(k,k')=l}}\left\{
k_je^{\rho(\underset{n}{\sum}(2a_n+k_n+k_n') \ln^{\sigma}\lfloor n\rfloor-2 \ln^{\sigma}\lfloor n_1^*\rfloor)}\right.\right.\\
\nonumber &&\left. \left.\cdot
e^{-r(\underset{n}{\sum}(2a_n+k_n+k_n') \ln^{\sigma}\lfloor n\rfloor-2 \ln^{\sigma}\lfloor j\rfloor)-\mu \ln^{\sigma}\lfloor l\rfloor}\right\}\right|.
\end{eqnarray}

Now we will estimate the last inequality in the following two cases:

\textbf{Case 1.} $|j|\leq n_3^*$.

Then one has
\begin{eqnarray*}
(\ref{L1})
&\leq& \left|\sum_{a,k,k'}k_je^{\rho\sum_{i\geq 1}\ln^{\sigma}\lfloor n_i^*\rfloor}e^{-r(n_1^*)^{\theta}-r\sum_{i\geq 4}\ln^{\sigma}\lfloor n_i^*\rfloor}\right|\left( \sum_{l\in \mathbb{Z}}e^{-\mu\ln^{\sigma}\lfloor l\rfloor}\right)\\
\nonumber&\leq&\sum_{a,k,k'}\left(\sum_{i\geq 1}\ln^{\sigma}\lfloor n_i^*\rfloor\right)e^{\frac13(-r+3\rho)\sum_{i\geq 1}\ln^{\sigma}\lfloor n_i^*\rfloor}\left( \sum_{l\in \mathbb{Z}}e^{-\mu\ln^{\sigma}\lfloor l\rfloor}\right)\\
\nonumber&\leq& \frac{3}{\mu}\exp\left\{\left(\frac{2}{\mu\sigma}\right)^{\frac{1}{\sigma-1}}\right\}\left(\frac{12}{e(r-3\rho)}\right)\left(\sum_{a,k,k'}
e^{\frac14(-r+3\rho)\sum_{i\geq 1}\ln^{\sigma}\lfloor n_i^*\rfloor}\right)\\
&&\nonumber \quad\mbox{(in view of (\ref{0418011}))}\\
\nonumber&=&  \frac{3}{\mu}\exp\left\{\left(\frac{2}{\mu\sigma}\right)^{\frac{1}{\sigma-1}}\right\}\left(\frac{12}{e(r-3\rho)}\right) \sum_{a,k,k'}
e^{\frac14(-r+3\rho)\underset{n}{\sum}(2a_n+k_n+k'_n)\ln^{\sigma}\lfloor n\rfloor}\\
\nonumber&\leq&\frac{3}{\mu}\exp\left\{\left(\frac{2}{\mu\sigma}\right)^{\frac{1}{\sigma-1}}\right\}\left(\frac{12}{e(r-3\rho)}\right)\prod_{n\in\mathbb{Z}}
\left({1-e^{\frac12(-r+3\rho)\ln^{\sigma}\lfloor n\rfloor}}\right)^{-1}\\
\nonumber&&\times\prod_{n\in\mathbb{Z}}
\left({1-e^{\frac14(-r+3\rho)\ln^{\sigma}\lfloor n\rfloor}}\right)^{-2} \quad
\mbox{(in view of (\ref{041809}))}\\
\nonumber&\leq& \exp\left\{\left(\frac{2}{\mu}\right)^{\frac{1}{\sigma-1}}\right\}\left(\frac{3}{\rho}\right)\prod_{n\in\mathbb{Z}}
\left({1-e^{-\rho\ln^{\sigma}\lfloor n\rfloor}}\right)^{-1}
\nonumber\prod_{n\in\mathbb{Z}}
\left({1-e^{-\frac{1}{2}\rho\ln^{\sigma}\lfloor n\rfloor}}\right)^{-2}\\
&&\nonumber  \ {\mbox{(in view of $r>5\rho$)}}    \\
&\leq&\nonumber \exp\left\{\left(\frac{2}{\mu}\right)^{\frac{1}{\sigma-1}}\right\}
\exp\left\{\frac{100}{\rho}\cdot\exp\left\{\left(\frac{8}{\rho}\right)^{\frac{1}{\sigma-1}}\right\}\right\},
\end{eqnarray*}
where the last inequality is based on (\ref{122401}).

\textbf{Case 2.} $|j|> n_3^*$, which implies $k_j\leq 2$.

Then one has
\begin{eqnarray}
\nonumber(\ref{L1})
&\leq&2 \left|\sum_{l\in\mathbb{Z}}\sum_{a,k,k',\atop{m(k,k')=l}}e^{\rho\sum_{i\geq 3}\ln^{\sigma}\lfloor n_i^*\rfloor}e^{-\frac{1}{2}r\sum_{i\geq 3}\ln^{\sigma}\lfloor n_i^*\rfloor}e^{-(\mu-r)\ln^{\sigma}\lfloor l\rfloor}\right|\\
\nonumber&=& 2\left|\sum_{l\in\mathbb{Z}}\sum_{a,k,k',\atop{m(k,k')=l}}e^{(-\frac{1}{2}r+\rho)\sum_{i\geq 3}\ln^{\sigma}\lfloor n_i^*\rfloor}e^{-(\mu-r)\ln^{\sigma}\lfloor l\rfloor}\right|:= A.
\end{eqnarray}
If $\{n_i\}_{i\geq 1}$ is given, then $\{2a_n+k_n+k'_n\}_{n\in\mathbb{Z}}$ is specified, and hence $(a,k,k')$ is specified up to a factor of
$\prod_{n}(1+l_n^2),$
where
$l_n=\#\{j:n_j=n\}.$
Since $|j|>n_3^*$, then $j\in\{n_1,n_2\}$. Hence, if $(n_i)_{i\geq 3}$ and $j,m^{\ast}(k,k')$ are given, then $n_1$ and $n_2$ are uniquely determined. Then, one has
\begin{eqnarray}
\nonumber A &\leq&  2\left|\sum_{l\in\mathbb{Z}}\sum_{(n_i)_{i\geq3}}\prod_{|n|\leq n_1^*}(1+l_n^2)e^{(-(2-2^{\theta})r+\rho)\sum_{i\geq 3}\ln^{\sigma}\lfloor n_i^*\rfloor}e^{-(\mu-r)\ln^{\sigma}\lfloor l\rfloor}\right| \\
\nonumber&\leq&10\left|\sum_{l\in\mathbb{Z}}\sum_{(n_i)_{i\geq3}}\prod_{|n|\leq n_3^*}(1+l_n^2)e^{(-\frac{1}{2}r+\rho)\sum_{i\geq 3}\ln^{\sigma}\lfloor n_i^*\rfloor}e^{-(\mu-r)\ln^{\sigma}\lfloor l\rfloor}\right| \\
\nonumber && \mbox{ ( in view of $ \prod_{n \in \{n_1, n_{2}\}}(1+l_n^2) \leq 5 )$ }\\
\nonumber&\leq&10\left(\sum_{(n^*_i)_{i\geq3}}e^{-\rho\sum_{i\geq 3}\ln^{\sigma}\lfloor n_i^*\rfloor}\right)\sup_{(n^\ast_i)_{i\geq3}}\left(\prod_{|n|\leq n_3^*}(1+l_n^2)e^{-\frac{1}{2}\rho\sum_{i\geq 3}\ln^{\sigma}\lfloor n_i^*\rfloor}\right)\\
\nonumber && \times\left(\sum_{l\in\mathbb{Z}}e^{-(\mu-r)\ln^{\sigma}\lfloor l\rfloor}\right)\  \mbox{ ( in view of $ r > 5\rho $ ) }\\
\nonumber&\leq&10\left(\frac{3}{\mu-r}\right)
\exp\left\{\left(\frac{2}{(\mu-r)\sigma}\right)^{\frac{1}{\sigma-1}}\right\}
\exp\left\{6\left(\frac{8}{\rho}\right)^{\frac{1}{\sigma-1}}\cdot
\exp\left\{\left(\frac{4}{\rho}\right)^{\frac{1}{\sigma}}\right\}\right\}\\
\nonumber&&\times
\left(\sum_{(n^*_i)_{i\geq3}}e^{-\rho\sum_{i\geq 3}(n_i^*)^{\theta}}\right) \quad
\mbox{( in view of (\ref{0418011}) and (\ref{042807}))}\\
\nonumber&=& 10\left(\frac{3}{\mu-r}\right)
\exp\left\{\left(\frac{2}{(\mu-r)\sigma}\right)^{\frac{1}{\sigma-1}}\right\}
\exp\left\{6\left(\frac{8}{\rho}\right)^{\frac{1}{\sigma-1}}\cdot
\exp\left\{\left(\frac{4}{\rho}\right)^{\frac{1}{\sigma}}\right\}\right\}\\
\nonumber&&\times\left(\sum_{(l_n)_{|n|\leq n_3^*}}e^{-\rho\sum_{|n|\leq n_3^*}l_n\ln^{\sigma}\lfloor n\rfloor}\right)\\
\nonumber&\leq& \exp\left\{\left(\frac{2}{\mu-r}\right)^{\frac{1}{\sigma-1}}\right\}
\exp\left\{\frac{20}{\rho}\cdot
\exp\left\{\left(\frac{4}{\rho}\right)^{\frac{1}{\sigma-1}}\right\}\right\},
\end{eqnarray}
where the last inequality is based on (\ref{041809}).\\
Hence, we finished the proof of (\ref{029}).
\end{proof}

\section{KAM iteration}
\subsection{Derivation of homolnical equations}
The proof of Theorem \ref{Thm} employs the rapidly
converging iteration scheme of Newton type to deal with small divisor problems
introduced by Kolmogorov, involving the infinite sequence of coordinate transformations.
At the $s$-th step of the scheme, a Hamiltonian
$H_{s} = N_{s} + R_{s}$
is considered as a small perturbation of some normal form $N_{s}$. A transformation $\Phi_{s}$ is
set up so that
$$ H_{s}\circ \Phi_{s} = N_{s+1} + R_{s+1}$$
with another normal form $N_{s+1}$ and a much smaller perturbation $R_{s+1}$. We drop the index $s$ of $H_{s}, N_{s}, R_{s}, \Phi_{s}$ and shorten the index $s+1$ as $+$.

Rewrite $R$ as
\begin{equation}\label{N1}
R=R_0+R_1+R_2
\end{equation}
where
\begin{eqnarray*}
{R}_0&=&\sum_{a,k,k'\in{\mathbb{N}^{\mathbb{Z}}}\atop\mbox{supp}\ k\bigcap \mbox{supp}\ k'=\emptyset}B_{akk'}\mathcal{M}_{akk'},\\
{R}_1&=&\sum_{m\in\mathbb{Z}}J_m\left(\sum_{a,k,k'\in{\mathbb{N}^{\mathbb{Z}}}\atop\mbox{supp}\ k\bigcap \mbox{supp}\ k'=\emptyset}B_{akk'}^{(m)}\mathcal{M}_{akk'}\right),\\
{R}_2&=&\sum_{m_1,m_2\in\mathbb{Z}}J_{m_1}J_{m_2}\left(\sum_{a,k,k'\in{\mathbb{N}^{\mathbb{Z}}}\atop\mbox{no assumption}}B_{akk'}^{(m_1,m_2)}\mathcal{M}_{akk'}\right).
\end{eqnarray*}

We desire to eliminate the terms $R_0,R_1$ in (\ref{N1}) by the coordinate transformation $\Phi$, which is obtained as the time-1 map $X_F^{t}|_{t=1}$ of a Hamiltonian
vector field $X_F$ with $F=F_0+F_1$. Let ${F}_{0}$ (resp. ${F}_{1}$) has the form of ${R}_0$ (resp. ${R}_{1}$),
that is \begin{eqnarray}
\label{039}&&{F}_0=\sum_{a,k,k'\in{\mathbb{N}^{\mathbb{Z}}}\atop\mbox{supp}\ k\bigcap \mbox{supp}\ k'=\emptyset}F_{akk'}\mathcal{M}_{akk'},\\
\label{040}&&{F}_1=\sum_{m\in\mathbb{Z}}J_m\left(\sum_{a,k,k'\in{\mathbb{N}^{\mathbb{Z}}}\atop\mbox{supp}\ k\bigcap \mbox{supp}\ k'=\emptyset}F_{akk'}^{(m)}\mathcal{M}_{akk'}\right),
\end{eqnarray}
and the homolnical equations become
\begin{equation}\label{041}
\{N,{F}\}+R_0+R_{1}=[R_0]+[R_1],
\end{equation}
where
\begin{equation}\label{042}
[R_0]=\sum_{a\in\mathbb{N}^{\mathbb{Z}}}B_{a00}\mathcal{M}_{a00},
\end{equation}
and
\begin{equation}\nonumber
[R_1]=\sum_{m\in\mathbb{Z}}J_m\sum_{a\in\mathbb{N}^{\mathbb{Z}}}B_{a00}^{(m)}\mathcal{M}_{a00}.
\end{equation}
The solutions of the homological equations (\ref{041}) are given by
\begin{equation}\label{044}
F_{akk'}=\frac{B_{akk'}}{\sum_{n\in\mathbb{Z}}(k_n-k^{'}_n)(n^2+\widetilde{V}_n)},
\end{equation}
and
\begin{equation}\label{045}
F_{akk'}^{(m)}=\frac{B_{akk'}^{(m)}}{\sum_{n\in\mathbb{Z}}(k_n-k^{'}_n)(n^2+\widetilde{V}_n)},
\end{equation}
where $ \widetilde{V}_n$ denote the modulated frequencies by readjusting the multiplier $(\widetilde{V}_n)$ in (\ref{maineq}) to ensure at each stage $ \widetilde{V}_n = \omega_{n}$ with $ \omega=(\omega_{n})$ a fixed frequency.\\
The new Hamiltonian ${H}_{+}$ has the form
\begin{eqnarray}
H_{+}\nonumber&=&H\circ\Phi\\
&=&\nonumber N+\{N,F\}+R_0+R_1\\
&&\nonumber+\int_{0}^1\{(1-t)\{N,F\}+R_0+R_1,F\}\circ X_F^{t}\ \mathrm{d}{t}
+\nonumber R_2\circ X_F^1\\
&=&\label{046}N_++R_+,
\end{eqnarray}
where
\begin{equation}\label{047}
N_+=N+[R_0]+[R_1],
\end{equation}
and
\begin{equation}\label{048}
R_+=\int_{0}^1\{(1-t)\{N,F\}+R_0+R_1,F\}\circ X_F^{t}\ \mathrm{d} t+R_2\circ X_F^1.
\end{equation}
\subsection{The solvability of the homolnical equations (\ref{041})}In this subsection, we will estimate
the solutions of the homolnical equations (\ref{041}). To this end, we define the new norm for the Hamiltonian ${R}$ of the form as follows:
\begin{eqnarray}\label{049}
||{R}||_{\rho,\mu}^{+}=\max\{||R_0||_{\rho,\mu}^{+},||R_1||_{\rho,\mu}^{+}|,||R_2||_{\rho,\mu}^{+}\},
\end{eqnarray}
where
\begin{eqnarray}\nonumber
&&||R_0||_{\rho,\mu}^{+}=\sup_{a,k,k'\in\mathbb{N}^{\mathbb{Z}}}\frac{\left|B_{akk'}\right|}
{e^{\rho(\sum_{n}(2a_n+k_n+k_n')\ln^{\sigma}|\lfloor n\rfloor-2\ln^{\sigma}\lfloor n_1^*\rfloor)-\mu \ln^{\sigma}\lfloor m^*(k,k')\rfloor}},\\
\nonumber&&||R_1||_{\rho,\mu}^{+}=\sup_{a,k,k'\in\mathbb{N}^{\mathbb{Z}}\atop m\in\mathbb{Z}}\frac{\left|B^{(m)}_{akk'}\right|}{e^{\rho(\sum_{n}(2a_n+k_n+k_n')\ln^{\sigma}\lfloor n\rfloor+2\ln^{\sigma}\lfloor m\rfloor-2\ln^{\sigma}\lfloor n_1^*\rfloor)
-\mu\ln^{\sigma}\lfloor m^\ast(k,k')\rfloor}},\\
\nonumber&&||R_2||_{\rho,\mu}^{+}\\
\nonumber&=&\sup_{a,k,k'\in\mathbb{N}^{\mathbb{Z}}\atop
m_1,m_2\in\mathbb{Z}}\frac{\left|B^{(m_1,m_2)}_{akk'}\right|}{e^{\rho(\sum_{n}(2a_n+k_n+k_n')\ln^{\sigma}\lfloor n\rfloor
+2\ln^{\sigma}\lfloor m_1\rfloor+2\ln^{\sigma}\lfloor m_2\rfloor-2\ln^{\sigma}\lfloor n_1^*\rfloor)-\mu \ln^{\sigma}\lfloor m^\ast(k,k')\rfloor}}.
\end{eqnarray}
Moreover, one has the following estimates:
\begin{lem}\label{N2}
Given any $ \mu>\delta>0,\rho>0$, one has
\begin{equation}\label{053}
||R||_{\rho+\delta,\mu-\delta}^{+}\leq\exp\left\{3\left(\frac{4}{\delta}\right)^{\frac{1}{\sigma-1}}\cdot
\exp\left\{\left(\frac{4}{\delta}\right)^{\frac{1}{\sigma}}\right\}\right\}||R||_{\rho,\mu}
\end{equation}
and
\begin{equation}\label{054}
||R||_{\rho+\delta,\mu-\delta}\leq\frac{64}{e^{2}\delta^2}||R||_{\rho,\mu}^{+}.
\end{equation}
\end{lem}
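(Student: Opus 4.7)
The plan is to translate directly between the pure monomial expansion $R=\sum_{a,k,k'}B_{akk'}\mathcal{M}_{akk'}$ (controlled by $\|R\|_{\rho,\mu}$) and the decomposition $R=R_0+R_1+R_2$ (controlled by $\|R\|^+_{\rho,\mu}$). The identity in play throughout is $J_m=q_m\bar q_m-I_m(0)$, equivalently
\[
|q_j|^{2\ell_j}=(J_j+I_j(0))^{\ell_j}=\sum_{p_j=0}^{\ell_j}\binom{\ell_j}{p_j}J_j^{p_j}I_j(0)^{\ell_j-p_j},\qquad \ell_j=\min(k_j,k_j').
\]
Two observations guide the weight bookkeeping: (i) every such manipulation preserves the momentum $m(k,k')$, so the $\mu$-piece of the two norms matches exactly; (ii) a $J_m$ factor is charged weight $e^{2\rho\ln^\sigma\lfloor m\rfloor}$ in the $+$-norm, and the corresponding $q_m\bar q_m$ or $I_m(0)$ factor is charged the same weight in the pure monomial norm. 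The $\delta$-loss in $\mu$ is therefore used only to generate decay for the sums over the new indices $m,m_1,m_2$, and the $\delta$-gain in $\rho$ is used to absorb the possible jump of $n_1^\ast$ to $\max(m,n_1^\ast)$ when an outer $J_m$ is pushed inside a pure monomial.

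For the second inequality I would expand $J_m=q_m\bar q_m-I_m(0)$ in $R_1$ and both $J_{m_i}$'s in $R_2$. A single term of $R_1$ at index $m$ produces two pure $\mathcal{M}_{akk'}$ monomials, and one of $R_2$ produces four. At the shifted parameters $(\rho+\delta,\mu-\delta)$ the ratio of the two weights reduces, after absorbing the $n_1^\ast$-jump with $2\delta$ of slack, to a single factor $e^{-2\delta\ln^\sigma\lfloor m\rfloor}$. Summing $\sum_m e^{-2\delta\ln^\sigma\lfloor m\rfloor}\le 8/(e\delta)$ by the same type of sum-to-integral estimate used in the proofs of Lemmas \ref{H3} and \ref{H6}, and squaring for the $R_2$ double sum, yields the prefactor $64/(e^2\delta^2)$.

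For the first inequality I would conversely Taylor-expand each source monomial $B_{akk'}\mathcal{M}_{akk'}$ with common support $S=\mathrm{supp}\,k\cap\mathrm{supp}\,k'$ via the binomial identity above, and collect the result by total $J$-degree $P=\sum_{j\in S}p_j$: the $P=0$ piece feeds $R_0$; the $P=1$ piece feeds $R_1$ at the unique active index; the $P\ge 2$ piece feeds $R_2$ by designating any two $J$-slots as $J_{m_1}J_{m_2}$ and re-expanding any leftover $J_j=q_j\bar q_j-I_j(0)$ back into pure $\mathcal{M}_{akk'}$ form. The coefficients $B^{(m)}_{akk'}$ and $B^{(m_1,m_2)}_{akk'}$ then appear as sums over the source $(a,k,k')$ weighted by binomial coefficients and by the combinatorial choices of the distinguished $J$-slots, and the weight comparison is identical to the one for the second bound. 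The main obstacle is exactly this combinatorial sum: the number of ways a given decomposed monomial can arise grows in principle like $\prod_{j\in S}(1+\ell_j)^c$. I would control it by spending the $\delta$-shift in $\rho$ on a uniform geometric budget of the form $e^{-\delta\sum_n(2a_n+k_n+k_n')\ln^\sigma\lfloor n\rfloor}$, and then invoking the two standard estimates already used earlier in the paper, namely bounds of the type $\sup_l(1+l^2)e^{-c\delta l\ln^\sigma\lfloor n\rfloor}\le C$ and $\prod_n(1-e^{-c\delta\ln^\sigma\lfloor n\rfloor})^{-1}$, exactly in the pattern used in the proofs of Lemmas \ref{H3} and \ref{H6}. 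Together these produce the stated double-exponential factor $\exp\{3(4/\delta)^{1/(\sigma-1)}\exp\{(4/\delta)^{1/\sigma}\}\}$.
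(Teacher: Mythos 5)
Your high-level plan — translating between the pure expansion $R=\sum B_{akk'}\mathcal{M}_{akk'}$ and the decomposition $R=R_0+R_1+R_2$ via the identity $J_m=q_m\bar q_m-I_m(0)$ and its binomial expansion — is the same as the paper's. Your sketch of (\ref{053}) is on the right track: the paper rewrites $\mathcal{M}_{akk'}=\prod_n I_n(0)^{a_n}I_n^{b_n}q_n^{l_n}\bar q_n^{l_n'}$ with $b_n=k_n\wedge k_n'$, expands $I_n^{b_n}=(I_n(0)+J_n)^{b_n}$, collects by $J$-degree, and bounds the multiplicity of a given target $\mathcal{M}_{akk'}$ by $\prod_n(1+a_n)$ (for the $R_0$ piece) and similar; it then kills this polynomial factor against the saving $e^{-\delta(\sum_n(2a_n+k_n+k_n')\ln^\sigma\lfloor n\rfloor-2\ln^\sigma\lfloor n_1^*\rfloor+\ln^\sigma\lfloor m^*\rfloor)}$, split into three cases according to whether $n_1^*=n_2^*=n_3^*$, $n_1^*>n_2^*=n_3^*$, or $n_2^*>n_3^*$, and invokes the technical inequality (\ref{042807}). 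You gesture at exactly this mechanism, though you leave the case analysis and the precise choice of lemma unstated.

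However, your proof of (\ref{054}) has a genuine gap. You propose to expand each $J_m$, extract a factor $e^{-2\delta\ln^\sigma\lfloor m\rfloor}$, \emph{sum over} $m$ (and over $(m_1,m_2)$ for $R_2$), and invoke a bound $\sum_m e^{-2\delta\ln^\sigma\lfloor m\rfloor}\le 8/(e\delta)$. That last bound is false: by the paper's own Lemma~\ref{lem2}, $\sum_{j\ge 1}e^{-\delta\ln^\sigma j}\le (6/\delta)\exp\{(1/\delta)^{1/(\sigma-1)}\}$, so $\sum_m e^{-2\delta\ln^\sigma\lfloor m\rfloor}$ is of order $\delta^{-1}\exp\{(1/(2\delta))^{1/(\sigma-1)}\}$, not $O(1/\delta)$ — the tail decay $e^{-2\delta\ln^\sigma m}$ is far too slow for a polynomial bound. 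Your route would therefore produce a constant that is \emph{super-polynomial} in $1/\delta$, not $64/(e^2\delta^2)$ as claimed. The paper gets the polynomial constant by \emph{not} summing over $m$: because the $I_n^{b_n}$ representation makes the $\mathcal{M}_{abll'}$ monomial bijective with a pure monomial, there is no genuine sum over $m$; there is only a per-monomial \emph{multiplicity} counting how many $(m;a,k,k')$ or $(m_1,m_2;a,k,k')$ can map to a fixed target under the expansion of $J$. The paper bounds this multiplicity by $(\sum_n(a_n+b_n))^2\le(2\sum_{i\ge3}\ln^\sigma\lfloor n_i^*\rfloor)^2$, and then the saving $e^{-\frac12\delta\sum_{i\ge3}\ln^\sigma\lfloor n_i^*\rfloor}$ combined with the maximum estimate $\max_{x\ge0}x^2e^{-\frac12\delta x}\le(4/(e\delta))^2$ (Lemma~\ref{8.6} with $p=2$) directly yields $64/(e^2\delta^2)$. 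You need this multiplicity argument, not a sum over $m$.
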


\begin{proof}
Firstly, we will prove the inequality (\ref{053}).
Write $\mathcal{M}_{akk'}$ in the form of
\begin{equation*}
\mathcal{M}_{akk'}=\mathcal{M}_{abll'}=\prod_nI_n(0)^{a_n}I_n^{b_n}q_n^{l_n}{\bar q_n}^{l_n'},
\end{equation*}
where
\begin{equation*}
b_n=k_n\wedge k_n',\quad l_n=k_n-b_n,\quad l_n'=k_n'-b_n'
\end{equation*}
and
$l_nl_n'=0$ for all $n$.

Express the term
\begin{equation*}
\prod_nI_n^{b_n}=\prod_n(I_n(0)+J_n)^{b_n}
\end{equation*}by the monomials of the form
\begin{equation*}
\prod_nI_n(0)^{b_n},
\end{equation*}
\begin{equation*}
\sum_{m,b_m\geq 1}\left(I_m(0)^{b_m-1}J_m\right)\left(\prod_{n\neq m}I_n(0)^{b_n}\right),
\end{equation*}
\begin{equation*}
\sum_{m,b_m\geq2\atop
r\leq b_m-2}\left(\prod_{n< m}I_n(0)^{b_n}\right)\left(I_m(0)^{r}J_m^2I_m^{b_m-r-2}\right)\left(\prod_{n> m}I_n^{b_n}\right),
\end{equation*}
and
\begin{eqnarray*}
&&\sum_{m_1< m_2,b_{m_1},b_{m_2}\geq 1\atop
r\leq b_{m_2}-1}\left(\prod_{n< m_1}I_n(0)^{b_n}\right)\left(I_{m_1}(0)^{b_{m_1}-1}J_{m_1}\right)
\\
&&\nonumber\times\left(\prod_{m_1<n< m_2}I_n(0)^{b_n}\right)\left(I_{m_2}(0)^{r}J_{m_2}I_{m_2}^{b_{m_2}-r-1}\right)
\left(\prod_{n> m_2}I_n^{b_n}\right).
\end{eqnarray*}

Now we will estimate the bounds for the coefficients respectively.

Consider the term
$\mathcal{M}_{akk'}=\prod_nI_n(0)^{a_n}q_n^{k_n}\bar q_n^{k_n'}$ with fixed $a,k,k'$ satisfying $k_nk_n'=0$ for all $n$. It is easy to see that $\mathcal{M}_{akk'}$ comes from some parts of the terms $\mathcal{M}_{\alpha\kappa\kappa'}$ with no assumption for $\kappa$ and $\kappa'$. For any given $n$ one has
\begin{equation*}
I_n(0)^{a_n}q_n^{k_n}\bar q_n^{k_n'}=\sum_{\beta_n=k_n\wedge k_n'}I_n(0)^{\alpha_n+\beta_n}q_n^{\kappa_n-\beta_n}\bar q_n^{\kappa_n'-\beta_n}.
\end{equation*}
Hence,
\begin{equation}\label{055}
\alpha_n+\beta_n=a_n,
\end{equation}
and
\begin{equation}\label{056}
\kappa_n-\beta_n=k_n,\qquad \kappa_n'-\beta_n=k_n'.
\end{equation}
Therefore, if $0\leq\alpha_n\leq a_n$ is chosen, so $\beta_n,k_n,k_n'$ are determined.
On the other hand,
\begin{eqnarray}
\nonumber&&|B_{\alpha\kappa\kappa'}|\\
&\leq&\nonumber ||R||_{\rho,\mu}e^{\rho\left(\sum_{n}(2\alpha_n+\kappa_n+\kappa_n')\ln^{\sigma}\lfloor n\rfloor-2\ln^{\sigma}\lfloor n_1^*\rfloor\right)-\mu \ln^{\sigma}\lfloor m^{\ast}(\kappa,\kappa')\rfloor}\\
&=&\nonumber||R||_{\rho,\mu}e^{\rho\left(\sum_{n}(2\alpha_n+(k_n+a_n-\alpha_n)+(k_n'+a_n-\alpha_n))\ln^{\sigma}\lfloor n\rfloor-2\ln^{\sigma}\lfloor n_1^*\rfloor\right)-\mu \ln^{\sigma}\lfloor m^{\ast}(\kappa,\kappa')\rfloor}\quad\\
&&\nonumber{(\mbox{in view of (\ref{055}) and (\ref{056})})}\\
&=&\nonumber||R||_{\rho,\mu}e^{\rho\left(\sum_{n}(2a_n+k_n+k_n')\ln^{\sigma}\lfloor n\rfloor-2\log^{\sigma}\lfloor n_1^*\rfloor\right)-\mu \log^{\sigma}\lfloor m^*(k,k')\rfloor}.
\end{eqnarray}
Hence,
\begin{equation}\label{057}
|B_{akk'}|\leq||R||_{\rho,\mu}\prod_n(1+a_n)e^{\rho\left(\sum_{n}(2a_n+k_n+k_n')\ln^{\sigma}\lfloor n\rfloor-2\ln^{\sigma}\lfloor n_1^*\rfloor\right)-\mu \ln^{\sigma}\lfloor m^*(k,k')\rfloor}.
\end{equation}
Similarly,
\begin{eqnarray*}
\left|B_{akk'}^{(m)}\right|&\leq& ||R||_{\rho,\mu}\left(\prod_{n\neq m}(1+a_n)\right)(1+a_m)^2 \nonumber\\
&&\times e^{\rho\left(\sum_{n}(2a_n+k_n+k_n')\ln^{\sigma}\lfloor n\rfloor+2\ln^{\sigma}\lfloor m\rfloor-2\ln^{\sigma}\lfloor n_1^*\rfloor\right)-\mu \ln^{\sigma}\lfloor m^*(k,k')\rfloor},\\
\left|B_{akk'}^{(m,m)}\right|&\leq& ||R||_{\rho,\mu}\left(\prod_{n\neq m}(1+a_n)\right)(1+a_m)^3 \nonumber\\ &&\times e^{\rho\left(\sum_{n}(2a_n+k_n+k_n')\ln^{\sigma}\lfloor n\rfloor
+4\ln^{\sigma}\lfloor m\rfloor-2\ln^{\sigma}\lfloor n_1^*\rfloor\right)-\mu \ln^{\sigma}\lfloor m^*(k,k')\rfloor},
\end{eqnarray*}
\begin{eqnarray*}
&&\left|B_{akk'}^{(m_1,m_2)}\right|\\
&\leq& ||R||_{\rho,\mu}\left(\prod_{n<m_1}(1+a_n)\right)(1+a_{m_1})^2\left(\prod_{m_1<n<m_2 }(1+a_n)\right)(1+a_{m_2})^2\\
&&\times e^{\rho\left(\sum_{n}(2a_n+k_n+k_n')\ln^{\sigma}\lfloor n\rfloor
+2\ln^{\sigma}\lfloor m_1\rfloor+2\ln^{\sigma}\lfloor m_2\rfloor-2\ln^{\sigma}\lfloor n_1^*\rfloor\right)-\mu \ln^{\sigma}\lfloor m^*(k,k')\rfloor}.
\end{eqnarray*}
In view of (\ref{057}), we have
\begin{eqnarray}\label{058}
&&||R_0||_{\rho+\delta,\mu-\delta}^{+}\\
\nonumber&\leq&||R||_{\rho,\mu}\prod_n(1+a_n)e^{-\delta\left(\sum_{n}(2a_n+k_n+k_n')\ln^{\sigma}\lfloor n\rfloor
-2\ln^{\sigma}\lfloor n_1^*\rfloor+\ln^{\sigma}\lfloor m^*(k,k')\rfloor\right)}.
\end{eqnarray}
Now we will show that
\begin{eqnarray}
\nonumber&&\prod_{n}(1+a_n)e^{-\delta\left(\sum_{n}(2a_n+k_n+k_n')\ln^{\sigma}\lfloor n\rfloor
-2\ln^{\sigma}\lfloor n_1^*\rfloor+\ln^{\sigma}\lfloor m^*(k,k')\rfloor\right)}\\
\label{059}&\leq& \exp\left\{3\left(\frac{4}{\delta}\right)^{\frac{1}{\sigma-1}}\cdot
\exp\left\{\left(\frac{4}{\delta}\right)^{\frac{1}{\sigma}}\right\}\right\}.
\end{eqnarray}

\textbf{Case 1.} $n_1^*=n_2^*=n_3^*.$
Then one has
\begin{eqnarray*}
\mbox{L.H.S. of~}(\ref{059})
&=&\nonumber \prod_{n}(1+a_n)e^{-{\delta}\sum_{i\geq3}\ln^{\sigma}\lfloor n_i\rfloor}e^{-\delta \ln^{\sigma}\lfloor m^*(k,k')\rfloor}\\
&\leq&\nonumber\prod_n(1+a_n)e^{-\frac{\delta}{3}\sum_{i\geq1}\ln^{\sigma}\lfloor n_i\rfloor}\\
&=&\nonumber\prod_n(1+a_n)e^{-\frac\delta3\sum_{n}(2a_n+k_n+k_n')\ln^{\sigma}\lfloor n\rfloor}\\
&\leq&\nonumber\prod_n\left((1+a_n)e^{-\frac{2\delta}{3}a_n\ln^{\sigma}\lfloor n\rfloor}\right)\\
&\leq&\exp\left\{3\left(\frac{3}{\delta}\right)^{\frac{1}{\sigma-1}}\cdot
\exp\left\{\left(\frac{3}{\delta}\right)^{\frac{1}{\sigma}}\right\}\right\},
\end{eqnarray*}
where the last equality is based on (\ref{042807}).

\textbf{Case 2.} $n_1^*>n_2^*=n_3^*.$ In this case, $a_{n}=1$ for $n=n_1$.
Then we have
\begin{eqnarray*}
\mbox{L.H.S. of~}(\ref{059})
&=&2\left(\prod_{|n|\leq n_2^*}(1+a_n)e^{-\frac{1}{2}\delta\sum_{i\geq3}\ln^{\sigma}\lfloor n_i^*\rfloor}\right)
\\
&\leq&2\prod_{|n|\leq n_2^*}(1+a_n)e^{-\frac14\delta\sum_{i\geq2}\ln^{\sigma}\lfloor n_i^*\rfloor}\\
&=&\nonumber2\prod_{|n|\leq n_2^*}(1+a_n)e^{-\frac14\delta\sum_{|n|\leq n_2^*}(2a_n+k_n+k_n')\ln^{\sigma}\lfloor n\rfloor}\\
&\leq&2\prod_{|n|\leq n_2^*}\left( (1+a_n)e^{-\frac{1}{2}\delta a_n\ln^{\sigma}\lfloor n\rfloor}\right)\\
&\leq&\exp\left\{3\left(\frac{4}{\delta}\right)^{\frac{1}{\sigma-1}}\cdot
\exp\left\{\left(\frac{4}{\delta}\right)^{\frac{1}{\sigma}}\right\}\right\},
\end{eqnarray*}
where the last equality is based on (\ref{042807}).

\textbf{Case 3.} $n_{1}^{\ast}\geq n_2^*>n_3^*.$ In this case, $a_{n}=1$ or $2$ for $n\in\{n_1, n_2\}$.
Hence
\begin{eqnarray*}
\mbox{L.H.S. of~}(\ref{059})
&\leq&4\left(\prod_{|n|\leq n_3^*}(1+a_n)e^{-\delta\sum_{i\geq3}\ln^{\sigma}\lfloor n_i^*\rfloor}\right)
\\
&\leq&\nonumber4\prod_{|n|\leq n_3^*}(1+a_n)e^{-\delta\sum_{|n|\leq n_3^*}(2a_n+k_n+k_n')\ln^{\sigma}\lfloor n\rfloor}\\
&\leq&4\prod_{|n|\leq n_3^*}\left( (1+a_n)e^{-2\delta a_n\ln^{\sigma}\lfloor n\rfloor}\right)\\
&\leq& \exp\left\{3\left(\frac{1}{\delta}\right)^{\frac{1}{\sigma-1}}\cdot
\exp\left\{\left(\frac{1}{\delta}\right)^{\frac{1}{\sigma}}\right\}\right\},
\end{eqnarray*}
where the last equality is based on (\ref{042807}).\\
We finish the proof of ({\ref{059}}).

Similarly, one has
\begin{eqnarray*}
&&||R_i||_{\rho+\delta,\mu-\delta}^{+}\leq\exp\left\{3\left(\frac{4}{\delta}\right)^{\frac{1}{\sigma-1}}\cdot
\exp\left\{\left(\frac{4}{\delta}\right)^{\frac{1}{\sigma}}\right\}\right\}||R_i||_{\rho,\mu},\qquad i=1,2,
\end{eqnarray*}
and hence
\begin{equation*}
||R||_{\rho+\delta,\mu-\delta}^{+}\leq\exp\left\{3\left(\frac{4}{\delta}\right)^{\frac{1}{\sigma-1}}\cdot
\exp\left\{\left(\frac{4}{\delta}\right)^{\frac{1}{\sigma}}\right\}\right\}||R||_{\rho,\mu}.
\end{equation*}
On the other hand, the coefficient of $\mathcal{M}_{abll'}$ increases by at most a factor $$\left(\sum_{n}(a_n+b_n)\right)^2,$$ then
\begin{eqnarray}
\nonumber||R||_{\rho+\delta,\mu-\delta}
&\leq&\nonumber||R||_{\rho,\mu}^{+}\left(\sum_{n}(a_n+b_n)\right)^2\\
\nonumber &&\times e^{-\delta(\sum_{n}(2a_n+k_n+k_n')\ln^{\sigma}\lfloor n\rfloor-2\ln^{\sigma}\lfloor n_1^*\rfloor+\ln^{\sigma}\lfloor m^*(\kappa,\kappa)\rfloor)}\\
\nonumber&\leq&||R||_{\rho,\mu}^{+}\left(2\sum_{i\geq 3}\ln^{\sigma}\lfloor n_i^*\rfloor\right)^2 e^{-\frac{1}{2}\delta\sum_{i\geq3}\ln^{\sigma}\lfloor n_i^*\rfloor}\\
\label{060} &\leq&\frac{64}{e^2\delta^2}||R||_{\rho,\mu}^{+}.
\end{eqnarray}
where the last inequality is based on (\ref{042805*}) with $p=2$.

\end{proof}

\begin{lem}\label{N3}
Let $(\widetilde{V}_n)_{n\in\mathbb{Z}}$ be Diophantine with $\gamma>0$ (see (\ref{005})). Then for any $\rho>0,0<\delta\ll1$, the solutions of the homological equations (\ref{041}), which are given by (\ref{044}) and (\ref{045}), satisfy
\begin{eqnarray}\label{061}
||{F}_i||_{\rho+\delta,\mu-2\delta}^{+}\leq \frac{1}{\gamma}\cdot \exp\left\{\left(\frac{1000}{{\delta}}\right)\cdot\exp\left\{4\cdot\left(\frac{50}{\delta}
\right)^{\frac{1}{\sigma-1}}\right\}\right\}||{{R_i}}||_{\rho,\mu}^{+},
\end{eqnarray}
 where $i=0,1$.
\end{lem}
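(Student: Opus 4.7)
The plan is to use the Diophantine assumption (\ref{005}) to obtain a lower bound on the small denominator $D := \sum_n(k_n-k_n')(n^2+\widetilde{V}_n)$, and then absorb the resulting Diophantine factor into the exponential weight of the $\|\cdot\|^+_{\rho+\delta,\mu-2\delta}$ norm by means of Lemma \ref{H1} together with a threshold argument. Set $l_n := k_n - k_n'$. Since $\sum_n l_n n^2\in\mathbb{Z}$, one has $|D|\geq \|\sum_n l_n\widetilde{V}_n\|$, so the Diophantine bound (\ref{005}) yields
$$|D| \geq \gamma\prod_{n\in\mathbb{Z}}(1+l_n^2\langle n\rangle^4)^{-1}.$$
From (\ref{044})--(\ref{045}) this gives $|F_{akk'}|,\,|F_{akk'}^{(m)}|\leq \gamma^{-1}|B_{\cdot}|\prod_n(1+l_n^2\langle n\rangle^4)$.

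Next, I plug this bound into the definition of $\|F_i\|^+_{\rho+\delta,\mu-2\delta}$ and estimate $|B_\cdot|$ by $\|R_i\|^+_{\rho,\mu}$ times the corresponding exponential factor. Writing $X := \sum_n(2a_n+k_n+k_n')\ln^{\sigma}\lfloor n\rfloor - 2\ln^{\sigma}\lfloor n_1^*\rfloor$ and $Y := \ln^{\sigma}\lfloor m^*(k,k')\rfloor$, the lemma reduces to verifying
$$\prod_{n}(1+l_n^2\langle n\rangle^4)\,e^{-\delta X - 2\delta Y} \leq \exp\left\{\frac{1000}{\delta}\exp\left\{4\left(\frac{50}{\delta}\right)^{\frac{1}{\sigma-1}}\right\}\right\}.$$
The case $i=1$ is identical to $i=0$: the extra factor $e^{-2\delta\ln^{\sigma}\lfloor m\rfloor}$ on the left matches the additional weight $2\ln^{\sigma}\lfloor m\rfloor$ present in the definition of $\|R_1\|^+_{\rho,\mu}$, so the $J_m$--index is transparent to the estimate.

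To bound the product I would take logarithms and use the elementary inequality
$$\log(1+l_n^2\langle n\rangle^4)\leq 2\log(1+|l_n|)+4\log(1+\langle n\rangle).$$
The first piece contributes at most $2\sum_n|l_n|\leq 2\sum_n(k_n+k_n')$, which for $c(\sigma)$ chosen so that $\ln^{\sigma}\lfloor n\rfloor\geq 1$ is trivially dominated by $\sum_i\ln^{\sigma}\lfloor n_i^*\rfloor$. For the second piece, split the support of $l$ at the threshold $N_\delta := \exp\{(4/\delta)^{1/(\sigma-1)}\}$. On $\{|n|\geq N_\delta\}$ one has $4\log\langle n\rangle\leq \delta\ln^{\sigma}\lfloor n\rfloor$, so the high--frequency contribution is absorbed into $\delta\sum_{n:l_n\neq 0}\ln^{\sigma}\lfloor n\rfloor$; by Lemma \ref{H1} this last sum is controlled via $\sum_{i\geq 3}\ln^{\sigma}\lfloor n_i^*\rfloor\leq 2(X+Y)$, while the top two indices $n_1^*,n_2^*$ are handled via the momentum inequality $\ln^{\sigma}\lfloor n_1^*\rfloor\leq \sum_{i\geq 2}\ln^{\sigma}\lfloor n_i^*\rfloor+\ln^{\sigma}\lfloor m^*\rfloor$ already obtained in the proof of Lemma \ref{H1}. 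On $\{|n|<N_\delta\}$, the support is finite of cardinality $O(N_\delta)$, and combining the resulting bound with the standard product/sum estimates used in Lemma \ref{H3} (of type $\sum_{(l_n)}\prod(1+l_n^2)e^{-c\sum_n l_n\ln^{\sigma}\lfloor n\rfloor}$) produces exactly the double--exponential constant in (\ref{061}).

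The main obstacle is the book--keeping that controls $\log\langle n_1^*\rangle$ and $\log\langle n_2^*\rangle$, which do not appear in $X+Y$ directly, through the momentum term $Y$. This is the same mechanism that drove the exponent $1/(\sigma-1)$ in Lemmas \ref{H3} and \ref{H6}: the effective gain in trading $\log$ against $\ln^{\sigma}$ is only $\sigma-1$, which is exactly what produces the inner exponent $(50/\delta)^{1/(\sigma-1)}$ in the stated constant. Everything else is routine exponential book-keeping mirroring the estimates already established.
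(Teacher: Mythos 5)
Your reduction to the inequality
\[
\prod_{n}(1+l_n^2\langle n\rangle^4)\,e^{-\delta X - 2\delta Y} \leq \exp\left\{\frac{1000}{\delta}\exp\left\{4\left(\frac{50}{\delta}\right)^{\frac{1}{\sigma-1}}\right\}\right\}
\]
is not correct as stated, and this points to the genuine gap in the argument: you never use the size of the quadratic combination $\sum_n(k_n-k'_n)n^2$, whereas the paper's proof is organized precisely around this quantity. Take $a=0$, $k$ supported at $n_1=N$ with $k_N=1$, and $k'$ supported at $n_2=N-1$ with $k'_{N-1}=1$. Then $n_1^*=N$, $n_2^*=N-1$, there are no $n_i^*$ with $i\geq 3$, and $m^*(k,k')=1$. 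Hence $X=\ln^{\sigma}\lfloor N-1\rfloor-\ln^{\sigma}\lfloor N\rfloor\to 0$ and $Y$ is bounded, so $e^{-\delta X-2\delta Y}$ stays bounded below, while $\prod_n(1+l_n^2\langle n\rangle^4)\sim N^8\to\infty$. The displayed inequality therefore fails. Of course in this example there is no small denominator at all, since $\sum_n(k_n-k'_n)n^2=2N-1$ is large; but your reduction never exploits this, so it cannot be repaired without adding an input you have omitted.

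The missing ingredient is exactly the case distinction and Lemma \ref{a1} in the paper. When $\left|\sum_n(k_n-k'_n)n^2\right|>10\sum_n|k_n-k'_n|$, the denominator $\sum_n(k_n-k'_n)(n^2+\widetilde V_n)$ is bounded below by $1$ using $|\widetilde V_n|\leq 2$ and $\operatorname{supp}k\cap\operatorname{supp}k'=\emptyset$, so no Diophantine factor is needed. Only in the complementary case, where $\left|\sum_n(k_n-k'_n)n^2\right|\leq 10\sum_n|k_n-k'_n|$, does one use the Diophantine bound; and it is \emph{this quadratic constraint together with the momentum identity} that, via Lemma \ref{a1}, yields
\[
\sum_n|k_n-k'_n|\ln^{\sigma}\lfloor n\rfloor\leq 3\cdot 4^{\sigma}\Bigl(\sum_{i\geq 3}\ln^{\sigma}\lfloor n_i^*\rfloor+\ln^{\sigma}\lfloor m^*(k,k')\rfloor\Bigr),
\]
i.e.\ a bound on the \emph{entire} sum, including the contributions from $n_1^*$ and $n_2^*$. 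Your proposal tries to control $n_1^*, n_2^*$ using only the momentum inequality $n_1^*\leq\sum_{i\geq 2}n_i^*+m^*(k,k')$ obtained inside Lemma \ref{H1}; but this only compares $n_1^*$ to $n_2^*$ and gives no bound on $n_2^*$ itself when $\sum_{i\geq 3}n_i^*$ and $m^*$ are small, which is precisely the situation above. The quadratic constraint is what forces $n_1^*, n_2^*$ to be comparable to $\sum_{i\geq 3}\lfloor n_i^*\rfloor^2 + \lfloor m^*\rfloor^2$ (see the proof of Lemma \ref{a1}), and that is what tames the Diophantine factor. Once the case distinction and Lemma \ref{a1} are put in, the remainder of your computation (the threshold split at $N_\delta$, the product estimates of type $\sum_{(l_n)}\prod(1+l_n^2)e^{-c\sum_n l_n\ln^{\sigma}\lfloor n\rfloor}$) is indeed the right book-keeping and matches the paper.
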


\begin{proof}

We distinguish two cases:

$\textbf{Case. 1.}$ $$\left|\sum_{n\in\mathbb{Z}}(k_n-k'_n)n^2\right|>10\sum_{n\in\mathbb{Z}}|k_n-k'_n|.$$\\
Since $|\widetilde{{V}}_n|\leq2$, we have
$$\left|\sum_{n\in\mathbb{Z}}(k_n-k'_n)(n^2+\widetilde{V}_n)\right|>10\sum_{n\in\mathbb{Z}}|k_n-k'_n|-2\sum_{n\in\mathbb{Z}}|k_n-k'_n|\geq1,$$
where the last inequality is based on $\mbox{supp}\ k\bigcap \mbox{supp}\ k'=\emptyset$.
There is no small divisor and (\ref{061}) holds trivially.

$\textbf{Case. 2.}$ $$\left|\sum_{n\in\mathbb{Z}}(k_n-k'_n)n^2\right|\leq 10\sum_{n\in\mathbb{Z}}|k_n-k'_n|.$$
In this case, we always assume  $$\left|\sum_{n\in\mathbb{Z}}(k_n-k_n')(n^2+ \widetilde{V}_n)\right|\leq1,$$otherwise there is no small divisor.

Firstly, one has
\begin{eqnarray}
\nonumber&&\sum_{n\in\mathbb{Z}}|k_n-k_n'|\ln^{\sigma}\lfloor n\rfloor\\
\nonumber&\leq&\nonumber{3\cdot 4^{\sigma}}\left(\sum_{i\geq3}\ln^{\sigma}\lfloor n_i^*\rfloor+\ln^{\sigma}\lfloor m^{*}(k,k')\rfloor\right)\qquad (\mbox{in view of Lemma \ref{a1}})\\
\label{062}&\leq& 6\cdot 4^{\sigma}\left(\sum_{n\in\mathbb{Z}}(2a_n+k_n+k_n')\ln^{\sigma}\lfloor n\rfloor-2\ln^{\sigma}\lfloor n_1^*\rfloor+2\ln^{\sigma}\lfloor m^*(k,k')\rfloor\right),
\end{eqnarray}
where the last inequality is based on Lemma \ref{H1}.

Since $\sum_{n\in\mathbb{Z}}(k_n-k'_n)n^2\in\mathbb{Z},$
the Diophantine property of $(\widetilde{V}_n)$ implies
\begin{equation}\label{063}
\left|\sum_{n\in\mathbb{Z}}(k_n-k'_n)(n^2+\widetilde{V}_n)\right|
\geq\frac\gamma2\prod_{n\in\mathbb{Z}}\frac{1}{1+{|k_n-k'_n|}^2\langle n\rangle^4}.
\end{equation}
Hence,
\begin{eqnarray}
\nonumber&&{|{F}_{akk'}|}e^{-(\rho+\delta)(\sum_{n}(2a_n+k_n+k'_n)\ln^{\sigma}\lfloor n\rfloor-2\ln^{\sigma}\lfloor n_1^{*}\rfloor)+(\mu-2\delta)\ln^{\sigma}\lfloor m^{*}(k,k')\rfloor}\\
\nonumber&\leq& 2\gamma^{-1}||{R_0}||_{\rho,\mu}^{+} \prod_{n}\left({1+{|k_n-k'_n|}^2\langle n\rangle^4}\right)\\
&&\nonumber\times e^{-\delta\left(\sum_{n}(2a_n+k_n+k'_n)\ln^{\sigma}\lfloor n\rfloor-2\ln^{\sigma}\lfloor n_1^*\rfloor+2\ln^{\sigma}\lfloor m^*(k,k')\rfloor\right)}
 \mbox{(in view of (\ref{063}))}\\
\nonumber &\leq&2\gamma^{-1} ||{R_0}||_{\rho,\mu}^+e^{\sum_{n}\ln(1+|k_n-k'_n|^2\langle n\rangle^4)}e^{-\frac{\delta}
{6\cdot 4^{\sigma}}\cdot\sum_n|k_n-k_n'|\ln^{\sigma}\lfloor n\rfloor}\\
&&\nonumber\mbox{(in view of (\ref{062}))}\\
\nonumber&=& 2\gamma^{-1} ||{R_0}||_{\rho,\mu}^+e^{\sum_{n}\ln(1+|k_n-k'_n|^2\langle n\rangle^4)}e^{-\tilde{\delta} \sum_n|k_n-k_n'|\ln^{\sigma}\lfloor n\rfloor}\ \qquad \mbox{ (note $\tilde{\delta}=\frac{\delta}{3\cdot 4^{\sigma}}$) }\\
&=&\nonumber2\gamma^{-1} ||{R_0}||_{\rho,\mu}^+e^{\sum_{{n:k_n\neq k'_n}}\ln(1+|k_n-k'_n|^2\langle n\rangle^4)-\tilde{\delta}\sum_{n:k_n\neq k'_n}|k_n-k_n'|\ln^{\sigma}\lfloor n\rfloor}\\
&\leq&\nonumber2\gamma^{-1} ||{R_0}||_{\rho,\mu}^+e^{8\left(\sum_{{n:k_n\neq k'_n}}\ln(|k_n-k'_n|\langle n\rangle)\right)+3-\tilde{\delta}\sum_{n:k_n\neq k'_n}|k_n-k_n'|\ln^{\sigma}\lfloor n\rfloor}\\
&=&\nonumber \frac{2e^{3}}{\gamma} ||{R_0}||_{\rho,\mu}^+e^{\sum_{n:k_n\neq k'_n}\left(8\ln(|k_n-k'_n|\lfloor n\rfloor)-\tilde{\delta}|k_n-k_n'|\ln^{\sigma}\lfloor n\rfloor\right)}\\
&=&\nonumber\frac{2e^{3}}{\gamma} ||{R_0}||_{\rho,\mu}^+e^{\sum_{|n|\leq N:k_n\neq k'_n}\left(8\ln(|k_n-k'_n|\lfloor n\rfloor)-\tilde{\delta}\ln^{\sigma}(|k_n-k_n'|\lfloor n\rfloor)\right)}\\
&&+\nonumber\frac{2e^{3}}{\gamma} ||{R_0}||_{\rho,\mu}^+e^{\sum_{n>N:k_n\neq k'_n}\left(8\ln(|k_n-k'_n|\lfloor n\rfloor)-\tilde{\delta}\ln^{\sigma}(|k_n-k_n'|\lfloor n\rfloor)\right)}\\
&&\nonumber \left(\mbox{where}\ N=\exp\left\{\left(\frac{8}{\tilde{\delta}\sigma }\right)^{\frac{1}{\sigma-1}}\right\} \right)\\
&=&\nonumber\frac{2e^{3}}{\gamma} ||{R_0}||_{\rho,\mu}^+\exp\left\{16\cdot\left(\frac{48}{{\delta}}\right)^{\frac{1}{\sigma-1}}
\cdot\exp\left\{4\cdot\left(\frac{48}{{\delta}}\right)^{\frac{1}{\sigma-1}}\right\}\right\}
+\frac{2e^{3}}{\gamma} ||{R_0}||_{\rho,\mu}^+\\
\nonumber&&\mbox{(in view of  (\ref{042805}))}\\
&\leq&\label{064}\frac{1}{\gamma}\cdot\exp\left\{20\cdot\left(\frac{48}{{\delta}}\right)^{\frac{1}{\sigma-1}}
\cdot\exp\left\{4\cdot\left(\frac{48}{{\delta}}\right)^{\frac{1}{\sigma-1}}\right\}\right\}||{R_0}||_{\rho,\mu}^+.
\end{eqnarray}
Therefore, in view of (\ref{064}), we finish the proof of (\ref{061}) for $i=0$.
Similarly, one can prove (\ref{061}) for $i=1,2$.
\end{proof}

\subsection{The new perturbation $R_+$ and the new normal form $N_+$}Firstly, we will prove two lemmas.

Recall the new term $R_+$ is given by (\ref{048}) and
write
\begin{equation}\label{069}
R_+=R_{0+}+R_{1+}+R_{2+}.
\end{equation}

Following the proof of \cite{CLSY}, one has
\begin{eqnarray}
||R_{0+}||_{\rho+3\delta,\mu-\frac{11}{2}\delta}^{+}
\label{0861} &\leq& \frac1{\gamma}\cdot \exp\left\{\frac{1000}{{\delta}}\exp\left\{4\cdot\left(\frac{100}{\delta}
\right)^{\frac{1}{\sigma-1}}\right\}\right\}\\
\nonumber&&\times(||R_0||_{\rho,\mu}^++||R_1||_{\rho,\mu}^+)(||R_0||_{\rho,\mu}^+
+{||R_1||_{\rho,\mu}^+}^2), \\
||R_{1+}||_{\rho+3\delta,\mu-\frac{11}{2}\delta}^{+}
\label{0862}&\leq& \frac1{\gamma}\cdot \exp\left\{\frac{1000}{{\delta}}\exp\left\{4\cdot\left(\frac{100}{\delta}
\right)^{\frac{1}{\sigma-1}}\right\}\right\}\\
\nonumber&&\times(||R_0||_{\rho,\mu}^++{||R_1||_{\rho,\mu}^+}^2), \\
||R_{2+}||_{\rho+3\delta,\mu-\frac{11}{2}\delta}^{+}
\label{0863}&\leq& ||R_2||_{\rho,\mu}^++\frac1{\gamma}\cdot\exp\left\{\frac{1000}{{\delta}}\exp\left\{4\cdot\left(\frac{100}{\delta}
\right)^{\frac{1}{\sigma-1}}\right\}\right\}\\
\nonumber&&\times(||R_0||_{\rho,\mu}^++||R_1||_{\rho,\mu}^+).
\end{eqnarray}

The new normal form $N_+$ is given in (\ref{047}). Note that $[R_0]$ (in view of (\ref{042})) is a constant which does not affect the Hamiltonian vector field. Moreover, in view of (\ref{042}), we denote by
\begin{equation}\label{087}
\omega_{n+}=n^2+ \widetilde{V}_n+\sum_{a\in\mathbb{N}^{\mathbb{Z}}}B_{a00}^{(n)}\mathcal{M}_{a00},
\end{equation}
where the terms $\sum_{a\in\mathbb{N}^{\mathbb{Z}}}B_{a00}^{(n)}\mathcal{M}_{a00}$ is the so-called frequency shift. The estimate of $\left|\sum_{a\in\mathbb{N}^{\mathbb{Z}}}B_{a00}^{(n)}\mathcal{M}_{a00}\right|$ will be given in the next section (see (\ref{114}) for the details).

\section{Iteration and Convergence}

Now we give the precise
set-up of iteration parameters. Let $s\geq1$ be the $s$-th KAM
step.
 \begin{itemize}
 \item[] $\rho_0=\frac{3-2\sqrt{2}}{100},$\ $r\geq 200\rho_{0}$, $\mu_0\geq2r$,
 \item[]$\delta_{s}=\frac{\rho_{0}}{(s+4)\ln^2(s+4)}$,

 \item[]$\rho_{s+1}=\rho_{s}+3\delta_s$,\item[] $\mu_{s+1}=\mu_{s}-6\delta_s$

 \item[]$\epsilon_s=\epsilon_{0}^{(\frac{3}{2})^s}$, which dominates the size of
 the perturbation,

 \item[]$\lambda_s=\epsilon^{0.01}_{s}$,

 \item[]$\eta_{s+1}=\frac{1}{20}\lambda_s\eta_s$,

 \item[]$d_0=0,\,d_{s+1}=d_s+\frac{1}{\pi^2(s+1)^2}$,

 \item[]$D_s=\{(q_n)_{n\in\mathbb{Z}}:\frac{1}{2}+d_s\leq|q_n|e^{r \ln^{\sigma}\lfloor n\rfloor}\leq1-d_s\}$.
 \end{itemize}
Denote the complex cube of size $\lambda>0$:
\begin{equation}\label{088}
\mathcal{C}_{\lambda}({V^*})=\left\{(V_n)_{n\in\mathbb{Z}}\in\mathbb{C}^{\mathbb{Z}}:|V_n-V^*_n|\leq \lambda\right\}.
\end{equation}

\begin{lem}{\label{E2}}
Suppose $H_{s}=N_{s}+R_{s}$ is real analytic on $D_{s}\times\mathcal{C}_{\eta_{s}}(V^*_{s})$,
where $$N_{s}=\sum_{n\in\mathbb{Z}}(n^2+\widetilde V_{n,s})|q_n|^2$$ is a normal form with coefficients satisfying
\begin{eqnarray}
\label{089}&&\widetilde{V}_{s}(V_{s}^*)=\omega,\\
\label{090}&&\left|\left|\frac{\partial \widetilde{V}_s}{{\partial V}}-I\right|\right|_{l^{\infty}\rightarrow l^{\infty}}<d_s\epsilon_{0}^{\frac{1}{10}},
\end{eqnarray}
and $R_{s}=R_{0,s}+R_{1,s}+R_{2,s}$ satisfying
\begin{eqnarray}
\label{091}&&||R_{0,s}||_{\rho_{s},\mu_{s}}^{+}\leq \epsilon_{s},\\
\label{092}&&||R_{1,s}||_{\rho_{s},\mu_{s}}^{+}\leq \epsilon_{s}^{0.6},\\
\label{093}&&||R_{2,s}||_{\rho_{s},\mu_{s}}^{+}\leq (1+d_s)\epsilon_0.
\end{eqnarray}
Then for all $V\in\mathcal{C}_{\eta_{s}}(V_{s}^*)$ satisfying $\widetilde V_{s}(V)\in\mathcal{C}_{\lambda_s}(\omega)$, there exist real analytic symplectic coordinate transformations
$\Phi_{s+1}:D_{s+1}\rightarrow D_{s}$ satisfying
\begin{eqnarray}
\label{094}&&||\Phi_{s+1}-id||_{r,\infty}\leq \epsilon_{s}^{0.5},\\
\label{095}&&||D\Phi_{s+1}-I||_{(r,\infty)\rightarrow(r,\infty)}\leq \epsilon_{s}^{0.5},
\end{eqnarray}
such that for
$H_{s+1}=H_{s}\circ\Phi_{s+1}=N_{s+1}+R_{s+1}$, the same assumptions as above are satisfied with `$s+1$' in place of `$s$', where $\mathcal{C}_{\eta_{s+1}}(V_{s+1}^*)\subset\widetilde V_{s}^{-1}(\mathcal{C}_{\lambda_s}(\omega))$ and
\begin{equation}\label{096}
||\widetilde{V}_{s+1}-\widetilde{V}_{s}||_{\infty}\leq\epsilon_{s}^{0.5},
\end{equation}
\begin{equation}\label{097}
||V_{s+1}^*-V_{s}^*||_{\infty}\leq2\epsilon_{s}^{0.5}.
\end{equation}
\end{lem}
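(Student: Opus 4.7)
The plan is to run one standard KAM step, splitting the analysis into (i) construction of the generating function $F=F_0+F_1$, (ii) construction of the symplectic map $\Phi_{s+1}=X_F^t|_{t=1}$ together with its estimates on the complex cube, (iii) estimates on the new perturbation $R_{s+1}$, and (iv) the frequency readjustment producing $V_{s+1}^*$. Throughout, the notation is that of Subsection 3.1, so $F_0,F_1$ are of the forms \eqref{039}, \eqref{040}, the homological equations are \eqref{041}, and their solutions are \eqref{044}, \eqref{045}.

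First I would define $F$ by \eqref{044}, \eqref{045} with $R_{0,s},R_{1,s}$ on the right-hand sides, and apply Lemma \ref{N3} with shift $\delta=\delta_s$ and the Diophantine constant $\gamma$ coming from the hypothesis $\widetilde V_s(V)\in\mathcal{C}_{\lambda_s}(\omega)$; up to a factor of order $\lambda_s^{-1}$ this yields
\[
\|F_0\|^{+}_{\rho_s+\delta_s,\mu_s-2\delta_s}\leq \lambda_s^{-1}\epsilon_s\cdot\exp\!\left\{\tfrac{1000}{\delta_s}\exp\{(\tfrac{100}{\delta_s})^{1/(\sigma-1)}\}\right\},
\]
and similarly for $F_1$ with $\epsilon_s^{0.6}$ in place of $\epsilon_s$. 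Since $\lambda_s=\epsilon_s^{0.01}$ and the double-exponential in $1/\delta_s$ grows only polynomially in $s$ (because $\delta_s\sim\rho_0/(s\ln^2 s)$), these bounds are $\ll\epsilon_s^{0.55}$ for $\epsilon_0$ small. I would then convert these $\|\cdot\|^+$-bounds to $\|\cdot\|_{\rho_s+2\delta_s,\mu_s-3\delta_s}$ bounds via \eqref{054} of Lemma \ref{N2}.

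Next I would define $\Phi_{s+1}=X_F^t|_{t=1}$ and verify \eqref{094}, \eqref{095}: Lemma \ref{H6} bounds $\sup_j e^{r\ln^{\sigma}\lfloor j\rfloor}|\partial F/\partial q_j|$ by a constant times the norm of $F$, and together with Gronwall applied to the flow of $X_F$ this yields $\|\Phi_{s+1}-\mathrm{id}\|_{r,\infty}\leq \epsilon_s^{0.5}$. Because $d_{s+1}-d_s=1/(\pi^2(s+1)^2)\gg \epsilon_s^{0.5}$, the flow maps $D_{s+1}$ into $D_s$. Composition $H_s\circ\Phi_{s+1}$ is then controlled by iterating Lemma \ref{E1}; the smallness condition \eqref{035} is verified using the bounds on $F$ just obtained.

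The estimates for the three pieces of $R_{s+1}=R_{0,s+1}+R_{1,s+1}+R_{2,s+1}$ follow from the already displayed formulas \eqref{0861}, \eqref{0862}, \eqref{0863} with $\rho=\rho_s,\mu=\mu_s,\delta=\delta_s$, so $\rho_{s+1}=\rho_s+3\delta_s$ and $\mu_{s+1}=\mu_s-6\delta_s$ are reached with a tiny margin $\tfrac12\delta_s$ to spare. Using $\|R_{0,s}\|^+\leq\epsilon_s$, $\|R_{1,s}\|^+\leq\epsilon_s^{0.6}$, $\|R_{2,s}\|^+\leq (1+d_s)\epsilon_0$, and the double-exponential factor $\gamma^{-1}\exp\exp(\cdots)\leq \epsilon_s^{-0.01}$ absorbed by the gap $(3/2)-1.2=0.3$ in the exponents of $\epsilon_s$, one checks
\[
\|R_{0,s+1}\|^+_{\rho_{s+1},\mu_{s+1}}\leq \epsilon_{s+1},\quad \|R_{1,s+1}\|^+_{\rho_{s+1},\mu_{s+1}}\leq \epsilon_{s+1}^{0.6},
\]
and $\|R_{2,s+1}\|^+_{\rho_{s+1},\mu_{s+1}}\leq \|R_{2,s}\|^+_{\rho_s,\mu_s}+\epsilon_s^{0.5}\leq (1+d_{s+1})\epsilon_0$, since $d_{s+1}-d_s\geq \epsilon_s^{0.5}/\epsilon_0$ for $\epsilon_0$ small.

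The most delicate step, and the main obstacle, is the frequency adjustment. By \eqref{087} the new frequencies read $\omega_{n,s+1}(V)=n^2+\widetilde V_{n,s}(V)+c_n(V)$ where $c_n(V)=\sum_{a}B_{a00}^{(n)}\mathcal{M}_{a00}$ is the frequency shift; the first-order pertubation bound \eqref{092} on $R_{1,s}$ together with $I_n(0)\leq 4e^{-2r\ln^{\sigma}\lfloor n\rfloor}$ controls $|c_n(V)|$ uniformly in $n$ (this is the forecast (114) the author postpones). Setting $\widetilde V_{n,s+1}(V)=\widetilde V_{n,s}(V)+c_n(V)$, the inductive hypothesis \eqref{090}, the chain rule, and the above smallness of $c_n$ give $\|\partial\widetilde V_{s+1}/\partial V-I\|_{\ell^\infty\to\ell^\infty}<d_{s+1}\epsilon_0^{1/10}$, so $\widetilde V_{s+1}$ is a bi-Lipschitz perturbation of the identity. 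The inverse function theorem in $\ell^\infty$ then produces a unique $V_{s+1}^*$ with $\widetilde V_{s+1}(V_{s+1}^*)=\omega$; the displacement estimate $\|V_{s+1}^*-V_s^*\|_\infty\leq 2\epsilon_s^{0.5}$ follows from $\widetilde V_s(V_s^*)=\omega=\widetilde V_{s+1}(V_{s+1}^*)=\widetilde V_s(V_{s+1}^*)+c(V_{s+1}^*)$ and $\|c\|_\infty\leq \epsilon_s^{0.5}$, yielding \eqref{096}, \eqref{097}. Finally, to guarantee $\mathcal{C}_{\eta_{s+1}}(V_{s+1}^*)\subset\widetilde V_s^{-1}(\mathcal{C}_{\lambda_s}(\omega))$ one uses $\eta_{s+1}=\tfrac{1}{20}\lambda_s\eta_s$ together with the Lipschitz bound on $\widetilde V_s$: for $V\in\mathcal{C}_{\eta_{s+1}}(V_{s+1}^*)$ one has $|\widetilde V_s(V)-\omega|\leq |\widetilde V_s(V)-\widetilde V_s(V_{s+1}^*)|+|c(V_{s+1}^*)|\leq 2\eta_{s+1}+\epsilon_s^{0.5}\leq \lambda_s$. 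This closes the induction.
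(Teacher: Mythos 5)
Your outline matches the paper's at a high level --- same choice of $F$, same use of Lemmas \ref{N3}, \ref{N2}, \ref{H6}, the same perturbation estimates \eqref{0861}--\eqref{0863}, and the same inverse-function-theorem closure for $V_{s+1}^*$ --- but you have silently skipped the part of the paper's argument that makes the first step legitimate, and this is a genuine gap.

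You write that you ``apply Lemma \ref{N3} with shift $\delta=\delta_s$ and the Diophantine constant $\gamma$ coming from the hypothesis $\widetilde V_s(V)\in\mathcal{C}_{\lambda_s}(\omega)$.'' But Lemma \ref{N3} is stated for $(\widetilde V_n)$ \emph{Diophantine}, and the hypothesis $\widetilde V_s(V)\in\mathcal{C}_{\lambda_s}(\omega)$ only places $\widetilde V_s(V)$ in a small complex cube around the Diophantine vector $\omega$; a vector close to a Diophantine vector is not itself Diophantine, and for $V\neq V_s^*$ there is no reason for $\widetilde V_s(V)$ to satisfy \eqref{005}. The paper closes this hole with the truncation step \eqref{098}--\eqref{103}: it observes that to obtain the desired gain of order $\epsilon_{s+1}$ one only needs to eliminate monomials with $\sum_{i\geq3}\ln^{\sigma}\lfloor n_i\rfloor+\ln^{\sigma}\lfloor m^*\rfloor\leq B_s$; via Lemma \ref{a1} this means the associated small divisors involve only $\widetilde V_n$ with $|n|\leq\mathcal{N}_s$; and for this finite family of resonance conditions it proves the lower bound \eqref{103}, showing that the Diophantine product is $>\lambda_s$, hence the denominators $\sum(k_n-k'_n)(n^2+\widetilde V_n)$ stay away from zero uniformly over the complex cube $\mathcal{C}_{\lambda_s}(\omega)$. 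This is precisely what provides \emph{analyticity in $V$} on $\mathcal{C}_{\lambda_s}(\omega)$, which you need later when you invoke Cauchy estimates to control $\partial\widetilde V_{s+1}/\partial V$. Without the truncation your step ``apply Lemma \ref{N3}'' simply cannot be performed, and the Cauchy estimate in the frequency-map part of your argument has no domain of analyticity to run on.

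A smaller point: you attribute the $\lambda_s^{-1}=\epsilon_s^{-0.01}$ loss in the bound on $F$ to a degraded Diophantine constant. In the paper that factor arises from absorbing the double-exponential constant $\exp\{\tfrac{1000}{\delta_s}\exp\{4(\tfrac{50}{\delta_s})^{1/(\sigma-1)}\}\}$ in Lemma \ref{N3} into $\epsilon_s^{-0.01}$, using that $\delta_s\sim\rho_0/(s\ln^2 s)$ decays only polylogarithmically while $\ln(1/\epsilon_s)$ grows geometrically; the $1/\gamma$ factor is a fixed constant throughout. The net size of your bound is right, but the mechanism is not the one you describe, and if you tried to justify the ``degraded Diophantine constant'' reading you would run into exactly the gap pointed out above (you would need to quantify how much the denominators can shrink along the complex cube, which again requires the truncation and \eqref{103}).
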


\begin{proof}
In the step $s\rightarrow s+1$, there is saving of a factor
\begin{equation}\label{19010501}
e^{-\delta_{s}\left(\sum_{n}(2a_n+k_n+k'_n)\ln^{\sigma}\lfloor n\rfloor-2\ln^{\sigma}\lfloor n_1^*\rfloor+2\ln^{\sigma}\lfloor m^{\ast}(k,k')\rfloor\right)}.
\end{equation}
By (\ref{H2}), one has
\begin{equation}\nonumber
(\ref{19010501})\leq e^{-\frac{1}{2}\delta_{s}\left(\sum_{i\geq3}\ln^{\sigma}\lfloor n_i\rfloor \right)-\delta_s\ln^{\sigma}\lfloor m^*(k,k')\rfloor}\leq e^{-\frac{1}{2}\delta_{s}\left(\sum_{i\geq3} \ln^{\sigma}\lfloor n_i\rfloor+\ln^{\sigma}\lfloor m^*(k,k')\rfloor\right)}.
\end{equation}
Recalling after this step, we need
\begin{eqnarray*}
&&||R_{0,s+1}||_{\rho_{s+1},\mu_{s+1}}^{+}\leq \epsilon_{s+1},\\
&&||R_{1,s+1}||_{\rho_{s+1},\mu_{s+1}}^{+}\leq \epsilon_{s+1}^{0.6}.
\end{eqnarray*}
Consequently, in $R_{i,s}\ (i=0,1)$, it suffices to eliminate the non-resonant monomials $\mathcal{M}_{akk'}$ for which
\begin{equation*}
e^{-\frac{1}{2}\delta_{s}(\sum_{i\geq3}\ln^{\sigma}\lfloor n_i\rfloor+\ln^{\sigma}\lfloor m^{\ast}(k,k')\rfloor)}\geq\epsilon_{s+1},
\end{equation*}
that is
\begin{equation}\label{098}
\sum_{i\geq3}\ln^{\sigma}\lfloor n_i\rfloor +\ln^{\sigma}\lfloor m^{\ast}(k,k')\rfloor\leq
\frac{2(s+4)\ln^{2}(s+4)}{\rho_{0}}\ln\frac{1}{\epsilon_{s+1}}.
\end{equation}
On the other hand, in the small divisors analysis (see Lemma \ref{a1}), one has
\begin{eqnarray}
\nonumber\sum_{n\in\mathbb{Z}}|k_n-k_n'|\ln^{\sigma}\lfloor n\rfloor
\nonumber&\leq& 3\cdot 4^{\sigma}\cdot\left(\sum_{i\geq3}\ln^{\sigma}\lfloor n_i\rfloor+\ln^{\sigma}\lfloor m^{\ast}(k,k')\rfloor\right)\ \  \\
&\leq&\nonumber 3\cdot 4^{\sigma}\cdot\frac{2(s+4)\ln^{2}(s+4)}{\rho_{0}}\ln\frac{1}{\epsilon_{s+1}}\ \  \mbox{(in view of  (\ref{098}))}\\
\nonumber&:=& B_s.
\end{eqnarray}
Hence we need only impose condition on $(\widetilde{V}_n)_{|n|\leq \mathcal{N}_{s}}$, where
\begin{equation}\label{100}
\mathcal{N}_{s}\sim \exp \left\{B_s^{\frac{1}{\sigma}}\right\}.
\end{equation}
Correspondingly, the Diophantine condition becomes
\begin{equation}\label{101}
\left|\left|\sum_{|n|\leq \mathcal{N}_{s}}(k_n-k'_n)\widetilde{V}_{n,s}\right|\right|\geq \gamma\prod_{|n|\leq \mathcal{N}_{s}}\frac{1}{1+(k_n-k'_n)^2\langle n\rangle^4}.
\end{equation}
We finished the truncation step.

Next we will show (\ref{101})  preserves under small perturbation of $(\widetilde{V}_n)_{|n|\leq \mathcal{N}_{s}}$ and this is equivalent to get lower bound on the right hand side of (\ref{101}). Let
\begin{equation}\nonumber
M_s\sim \left(\frac{B_s^{\frac{\sigma-1}{\sigma}}}{\ln^{\sigma} B_s}\right),
\end{equation} then we have
\begin{eqnarray}
\nonumber&&\prod_{|n|\leq \mathcal{N}_{s}}\frac{1}{1+(k_n-k'_n)^2\langle n\rangle^4}\\
\nonumber&=&\exp\left\{\sum_{|n|\leq M_s}\ln\left(\frac{1}{1+(k_n-k'_n)^2\langle n\rangle^4}\right)+\sum_{|n|> M_s}\ln\left(\frac{1}{1+(k_n-k'_n)^2\langle n\rangle^4}\right)\right\}\\
\nonumber&\geq& \exp\left\{-10\sum_{|n|\leq M_s,k_n\neq k'_n}(\ln(|k_n-k'_n|+8))\ln\lfloor n\rfloor\right.\\
\nonumber&&\left.-\sum_{|n|> M_s,k_n\neq k'_n}10\left(|k_n-k'_n|\ln\lfloor n\rfloor\right)\right\}\\
\nonumber&\geq& \exp\left\{-60M_s\ln B_s^{\frac{1}{\sigma}}-10B_s(\ln M_s)^{1-\sigma}\right\},\  \mbox{(in view of (\ref{098}))}\\
\nonumber&\geq& \exp\left\{-100B_s(\ln B_s)^{1-\sigma}\right\}\\
\label{103}&>&\exp\left\{-0.01\cdot\ln{\frac{1}{\epsilon_{s}}}\right\}=\lambda_s,
\end{eqnarray}
where the last inequality is based on $\sigma > 2$ and $\epsilon_0$ is small enough depending $ \sigma$ only.

Assuming $V\in \mathcal{C}_{\lambda_s}(\omega)$, from the lower bound (\ref{103}), the relation (\ref{101}) remains true if we substitute $V$ for $\omega$. Moreover, there is analyticity on $\mathcal{C}_{\lambda_s}(\omega)$. The transformations $\Phi_{s+1}$ is obtained as the time-1 map $X_{F_s}^{t}|_{t=1}$ of the Hamiltonian
vector field $X_{F_s}$ with $F_s=F_{0,s}+F_{1,s}$. Taking $\rho=\rho_s$, $\delta=\delta_s$ in Lemma \ref{N3}, we get
\begin{eqnarray}\label{104}
||F_{i,s}||_{\rho_s+\delta_s,\mu_s-{2}\delta_s}^{+}\leq \frac{1}{\gamma}\cdot \epsilon^{-0.01}_{s}||R_{i,s}||_{\rho_s,\mu_s}^{+},
\end{eqnarray}
where $i=0,1$. \\
By Lemma \ref{N2}, we get
\begin{equation}\label{105}
||F_{i,s}||_{\rho_s+2\delta_s,\mu_s-3\delta_s}
\leq\frac{64}{e^{2}\delta_s^2}||F_{i,s}||_{\rho_s+\delta_s,\mu_s-2\delta_s}^{+}\leq \epsilon^{0.95}_{s}.
\end{equation}
Combining (\ref{091}), (\ref{092}), (\ref{104}) and (\ref{105}), we get
\begin{equation}\label{106}
||F_{s}||_{\rho_s+2\delta_s,\mu_s-3\delta_s}\leq\epsilon_{s}^{0.58}.
\end{equation}
By Lemma \ref{H6}, we get
\begin{eqnarray}
\sup_{||q||_{r,\infty}<1}||X_{F_s}||_{r,\infty}
\label{107}&\leq&\epsilon_{s}^{0.55}.
\end{eqnarray}

Since $\epsilon_{s}^{0.55}\ll\frac{1}{\pi^2(s+1)^2}=d_{s+1}-d_s$, we have $\Phi_{s+1}:D_{s+1}\rightarrow D_{s}$ with
\begin{equation}\label{108}
\|\Phi_{s+1}-id\|_{r,\infty}\leq\sup_{q\in D_s}\|X_{F_s}\|_{r,\infty}\leq\epsilon_{s}^{0.55}<\epsilon_{s}^{0.5},
\end{equation}
which is the estimate (\ref{094}). Moreover, from (\ref{108}) we get
\begin{equation}\label{109}
\sup_{q\in D_s}||DX_{F_s}-I||_{r,\infty}\leq\frac{1}{d_s}\epsilon_{s}^{0.55}\ll\epsilon_{s}^{0.5},
\end{equation}
and thus the estimate (\ref{095}) follows.

Moreover, under the assumptions (\ref{091})--(\ref{093}) at stage $s$, we get from (\ref{0861}), (\ref{0862}) and (\ref{0863}) that
\begin{eqnarray*}
||R_{0,s+1}||_{\rho_{s+1},\mu_{s+1}}^{+}
&\leq& \frac1{\gamma}\cdot\exp\left\{\frac{1000}{\frac{\rho_{0}}{(s+4)\ln^{2}(s+4)}}
\exp\left\{4\cdot\left(\frac{100}{\frac{\rho_{0}}{(s+4)\ln^{2}(s+4)}}
\right)^{\frac{1}{\sigma-1}}\right\}\right\}\\
\nonumber&&\times\left(\epsilon_{0}^{(\frac{3}{2})^s}+\epsilon_{0}^{0.9(\frac{3}{2})^{s-1}}\right)\left(\epsilon_{0}^{(\frac{3}{2})^s}+\epsilon_{0}^{1.8(\frac{3}{2})^{s-1}}\right)\\
&=& \epsilon^{-0.01}_{s}\left(\epsilon_{0}^{2.2(\frac{3}{2})^s}+\epsilon_{0}^{1.8(\frac{3}{2})^s}
+\epsilon_{0}^{1.6(\frac{3}{2})^s}+\epsilon_{0}^{2(\frac{3}{2})^s}\right)\\
\nonumber&&\mbox{for $0<\epsilon_0\ll1$ (depending on $\sigma$ only)}\\
&=&\epsilon_{s+1},\\
||R_{1,s+1}||_{\rho_{s+1},\mu_{s+1}}^{+}
&\leq& \frac1{\gamma}\cdot\exp\left\{\frac{1000}{\frac{\rho_{0}}{(s+4)\ln^{2}(s+4)}}
\exp\left\{4\cdot\left(\frac{100}{\frac{\rho_{0}}{(s+4)\ln^{2}(s+4)}}
\right)^{\frac{1}{\sigma-1}}\right\}\right\}\\
\nonumber&&\times \left( \epsilon_{0}^{(\frac{3}{2})^s}+\epsilon_{0}^{1.8(\frac{3}{2})^{s-1}}\right)\\
&\leq& \epsilon^{-0.01}_{s}\left( \epsilon_{0}^{(\frac{3}{2})^s}+\epsilon_{0}^{1.2(\frac{3}{2})^{s}}\right) \\
&<&\epsilon_{s+1}^{0.6}\ \ \mbox{for $0<\epsilon_0\ll1$ (depending on $\sigma$ only)},\\
\end{eqnarray*}
and
\begin{eqnarray*}
&&||R_{2,s+1}||_{\rho_{s+1},\mu_{s+1}}^{+}\\
\nonumber&\leq& ||R_{2,s}||_{\rho_{s},\mu_{s}}^{+}+\frac1{\gamma}\cdot\exp\left\{\frac{1000}{\frac{\rho_{0}}{(s+4)\ln^{2}(s+4)}}
\exp\left\{4\cdot\left(\frac{100}{\frac{\rho_{0}}{(s+4)\ln^{2}(s+4)}}
\right)^{\frac{1}{\sigma-1}}\right\}\right\}\\
\nonumber&&\times
\left(\epsilon_{0}^{(\frac{3}{2})^s}+\epsilon_{0}^{0.6(\frac{3}{2})^{s}}\right)\\
&\leq&(1+d_s)\epsilon_0+ \epsilon^{-0.01}_{s}\cdot\epsilon_{0}^{0.6(\frac{3}{2})^s}\\
&\leq&(1+d_{s+1})\epsilon_0\ \ \mbox{for $0<\epsilon_0\ll1$ (depending on $\sigma$ only)},
\end{eqnarray*}
which are just the assumptions (\ref{091})--(\ref{093}) at stage $s+1$.

If $V\in \mathcal{C}_{\frac{\eta_s}{2}}(V_s^*)\subset\mathcal{C}_{{\eta_s}}(V_s^*)$ and using Cauchy's estimate, for any $m$ one has
\begin{eqnarray}
\nonumber\sum_{n\in\mathbb{Z}}\left|\frac{\partial \widetilde{V}_{m,s}}{\partial V_n}(V)\right|
\nonumber&\leq& \frac{2}{\eta_s}||\widetilde{V}_s||_\infty\\
\label{110}&<&10 \eta_s^{-1}\ \ \mbox{(since $||\widetilde{V}_s||_\infty\leq 1 $)}.
\end{eqnarray}
Let $V\in \mathcal{C}_{\frac{1}{10}\lambda_s\eta_s}(V_s^*)$, then
\begin{eqnarray*}
||\widetilde{V}_s(V)-\omega||_{\infty}
&=&||\widetilde{V}_s(V)-\widetilde{V}_s(V_s^*)||_{\infty}\\
& \leq&\sup_{\mathcal{C}_{\frac{1}{10}\lambda_s\eta_s}(V_s)}\left|\left|\frac{\partial \widetilde{V}_s}{\partial V}\right|\right|_{l^{\infty}\rightarrow l^{\infty}}||V-V_s^*||_{\infty}\\
&<&10 \eta_s^{-1}\cdot\frac{1}{10}\lambda_s\eta_s\ \ \mbox{(in view of (\ref{110}))}\\
&=&\lambda_s,
\end{eqnarray*}
that is
\begin{equation*}
\widetilde{V}_s\left(\mathcal{C}_{\frac{1}{10}\lambda_s\eta_s}(V_s)\right)\subseteq \mathcal{C}_{\lambda_s}(\omega).
\end{equation*}
Note that
\begin{eqnarray}
\nonumber\left|B^{(m)}_{a00}\right|
\nonumber&\leq& ||R_{1,s+1}||_{\rho_{s+1},\mu_{s+1}}^+e^{2\rho_{s+1}\left(\sum_{n}a_n\ln^{\sigma}\lfloor n\rfloor+\ln^{\sigma}\lfloor m\rfloor-\ln^{\sigma}\lfloor n_1^{*}\rfloor\right)}\\
\label{111}&<&\epsilon_{0}^{0.6(\frac{3}{2})^{s}}e^{2\rho_{s+1}
\left(\sum_{n}a_n\ln^{\sigma}\lfloor n\rfloor+\ln^{\sigma}\lfloor m\rfloor-\ln^{\sigma}\lfloor n_1^{*}\rfloor\right)}.
\end{eqnarray}
Assuming further
\begin{equation}\label{112}
I_{n}(0)\leq e^{-2r\ln^{\sigma}\lfloor n\rfloor}
\end{equation}
and for any $s$,
\begin{equation}\label{113}
\rho_s<\frac{1}{2}r,
\end{equation}
we obtain
\begin{eqnarray}
\nonumber&&\left|\sum_{a\in\mathbb{N}^{\mathbb{Z}}}B^{(m)}_{a00}\mathcal{M}_{a00}\right|\\
\nonumber&\leq & \epsilon_{0}^{0.6(\frac{3}{2})^{s}}\sum_{a\in\mathbb{N}^{\mathbb{Z}}}e^{2\rho_{s+1}
\left(\sum_{n}a_n\ln^{\sigma}\lfloor n\rfloor+\ln^{\sigma}\lfloor m\rfloor-\ln^{\sigma}\lfloor n_1^{*}\rfloor\right)}\prod_{n\in\mathbb{Z}}I_{n}(0)^{a_n}\\
\nonumber&\leq& \epsilon_{0}^{0.6(\frac{3}{2})^{s}}\sum_{a\in\mathbb{N}^{\mathbb{Z}}}e^{2\rho_{s+1}
\left(\sum_{n}a_n\ln^{\sigma}\lfloor n\rfloor\right)}\prod_{n\in\mathbb{Z}}I_{n}(0)^{a_n}\\
\nonumber&\leq& \epsilon_{0}^{0.6(\frac{3}{2})^{s}}\sum_{a\in\mathbb{N}^{\mathbb{Z}}}e^{\sum_{n}2\rho_{s+1}a_n\ln^{\sigma}\lfloor n\rfloor-\sum_{n}2r a_n\ln^{\sigma}\lfloor n\rfloor}\ \mbox{(in view of (\ref{112}))}\\
\nonumber&\leq& \epsilon_{0}^{0.6(\frac{3}{2})^{s}}\sum_{a\in\mathbb{N}^{\mathbb{Z}}}e^{-r\left(\sum_{n}a_n\ln^{\sigma}\lfloor n\rfloor\right)}\ \mbox{(in view of (\ref{113}))}\\
\nonumber&\leq& \epsilon_{0}^{0.6(\frac{3}{2})^{s}}\prod_{n\in\mathbb{Z}}\left(1-e^{-r \ln^{\sigma}\lfloor n\rfloor}\right)^{-1}\  \ \mbox{(in view of (\ref{041809}))}\\
\label{114}&\leq& \exp\left\{\left(\frac{18}{r}\right)\cdot \exp\left\{{\left(\frac{4}r \right)}^{\frac{1}{\sigma-1}}\right\}\right\}\epsilon_{0}^{0.6(\frac{3}{2})^{s}},\ \ \mbox{(in view of (\ref{122401}))}.
\end{eqnarray}
By (\ref{114}), we have
\begin{eqnarray}
\nonumber\left|\widetilde{V}_{m,s+1}-\widetilde{V}_{m,s}\right|
\nonumber&<& \exp\left\{\left(\frac{18}{r}\right)\cdot \exp\left\{{\left(\frac{4}r \right)}^{\frac{1}{\sigma-1}}\right\}\right\}\epsilon_{0}^{0.6(\frac{3}{2})^{s}}\\
\label{115}&<&\epsilon_{s}^{0.5}\ \ \mbox{(for $\epsilon_0$ small enough)},
\end{eqnarray}
which verifies (\ref{096}). Further applying Cauchy's estimate on $\mathcal{C}_{\lambda_s\eta_s}(V_s^*)$, one gets
\begin{eqnarray}\label{116}
\sum_{n\in\mathbb{Z}}\left|\frac{\partial \widetilde{V}_{m,s+1}}{\partial V_n}-\frac{\partial \widetilde{V}_{m,s}}{\partial V_n}\right|
\leq \frac{||\widetilde{V}_{s+1}-\widetilde{V}_{s}||_\infty}{\lambda_s\eta_s}\leq \frac{\epsilon_{s}^{0.5}}{\lambda_s\eta_s}.
\end{eqnarray}
Since
\begin{equation*}
\eta_{s+1}=\frac{1}{20}\lambda_s\eta_s,
\end{equation*}
hence one has
\begin{eqnarray}
\nonumber\lambda_s\eta_{s}&\geq& 20\eta_{0}\prod^{s}_{0} \frac{1}{20}\lambda_i\\
\nonumber&\geq&  20\eta_{0}\prod^{s}_{0} \frac{1}{20}\epsilon^{0.01}_i\\
\nonumber&\geq&  20\eta_{0}\prod^{s}_{0}\epsilon^{0.02}_i\\
\label{117}&\geq &20\epsilon_0^{\frac{3}{50}\times(\frac32)^{s}}.
\end{eqnarray}
On $ \mathcal{C}_{\frac{1}{10}\lambda_s\eta_s}(V_s^*)$ and for any $m$, we deduce from (\ref{116}), (\ref{117}) and the assumption (\ref{090}) that
\begin{eqnarray*}
\sum_{n\in\mathbb{Z}}\left|\frac{\partial \widetilde{V}_{m,s+1}}{\partial V_n}-\delta_{mn}\right|
&\leq&\sum_{n\in\mathbb{Z}}\left|\frac{\partial \widetilde{V}_{m,s+1}}{\partial V_n}-\frac{\partial \widetilde{V}_{m,s}}{\partial V_n}\right|+\sum_{n\in\mathbb{Z}}\left|\frac{\partial \widetilde{V}_{m,s}}{\partial V_n}-\delta_{mn}\right|\\
&\leq&\epsilon_{0}^{\frac{22}{50}\times(\frac{3}{2})^{s}}+d_s\epsilon_{0}^{\frac{1}{10}}\\
&<&d_{s+1}\epsilon_{0}^{\frac{1}{10}},
\end{eqnarray*}
and consequently
\begin{equation}\label{119}
\left|\left|\frac{\partial \widetilde{V}_{s+1}}{{\partial V}}-I\right|\right|_{l^{\infty}\rightarrow l^{\infty}}<d_{s+1}\epsilon_{0}^{\frac{1}{10}},
\end{equation}
which verifies (\ref{090}) for $s+1$.

Finally, we will freeze $\omega$ by invoking an inverse function theorem. From (\ref{119}) and the standard inverse function theorem,\ we can see that the functional equation
\begin{equation}\label{120}
\widetilde{V}_{s+1}(V_{s+1}^*)=\omega,  V_{s+1}^*\in \mathcal{C}_{\frac{1}{10}\lambda_s\eta_s}(V_s^*),
\end{equation}
has a solution $V_{s+1}^*$, which verifies (\ref{089}) for $s+1$. Rewriting (\ref{120}) as
\begin{equation}\label{121}
V_{s+1}^*-V_s^*=(I-\widetilde{V}_{s+1})(V^*_{s+1})-(I-\widetilde{V}_{s+1})({V^*_s})+(\widetilde{V}_s-\widetilde{V}_{s+1})(V_s^*),
\end{equation}
and by using (\ref{115}), (\ref{119}) implies
\begin{equation}\label{122}
||V_{s+1}^*-V_s^*||_{\infty}\leq (1+d_{s+1})\epsilon_{0}^{\frac{1}{10}}||V_{s+1}^*-V_s^*||_{\infty}+\epsilon_s^{0.5}<2\epsilon_s^{0.5}\ll \lambda_s\eta_s,
\end{equation}
which verifies (\ref{097}) and completes the proof of the iterative lemma.
\end{proof}

We are now in a position to prove the convergence. To apply iterative lemma with $s=0$, set
\begin{equation*}
V_0=\omega,\hspace{12pt}\widetilde{V}_0=id,\hspace{12pt}\eta_0=1-\sup_{n\in\mathbb{Z}}|\omega_n|,
\hspace{12pt}r=200\rho_0,\hspace{12pt}\mu_0=2r,\hspace{12pt}\epsilon_0=C\epsilon,
\end{equation*}
and consequently (\ref{089})--(\ref{093}) with $s=0$ are satisfied. Hence, the iterative lemma applies, and we obtain a decreasing
sequence of domains $D_{s}\times\mathcal{C}_{\eta_{s}}(V_{s}^*)$ and a sequence of
transformations
\begin{equation*}
\Phi^s=\Phi_1\circ\cdots\circ\Phi_s:\hspace{6pt}D_{s}\times\mathcal{C}_{\eta_{s}}(V_{s}^*)\rightarrow D_{0}\times\mathcal{C}_{\eta_{0}}(V_{0}^*),
\end{equation*}
such that $H\circ\Phi^s=N_s+P_s$ for $s\geq1$. Moreover, the
estimates (\ref{094})--(\ref{097}) hold. Thus we can show $V_s^*$ converge to a limit $V_*$ with the estimate
\begin{equation*}
||V_*-\omega||_{\infty}\leq\sum_{s=0}^{\infty}2\epsilon_{s}^{0.5}<\epsilon_{0}^{0.4},
\end{equation*}
and $\Phi^s$ converge uniformly on $D_*\times\{V_*\}$, where $D_*=\{(q_n)_{n\in\mathbb{Z}}:\frac{2}{3}\leq|q_n|e^{r \ln^{\sigma}\lfloor n\rfloor}\leq\frac{5}{6}\}$, to $\Phi:D_*\times\{V_*\}\rightarrow D_0$ with the estimates
\begin{eqnarray}
\nonumber&&||\Phi-id||_{r,\infty}\leq \epsilon_{s}^{0.4},\\
\nonumber&&||D\Phi-I||_{(r,\infty)\rightarrow(r,\infty)}\leq \epsilon_{s}^{0.4}.
\end{eqnarray}
Hence
\begin{equation}\label{123}
H_*=H\circ\Phi=N_*+R_{2,*},
\end{equation}
where
\begin{equation}\label{124}
N_*=\sum_{n\in\mathbb{Z}}(n^2+\omega_n)|q_n|^2
\end{equation}
and
\begin{equation}\label{125}
||R_{2,*}||_{\frac{r}{2},\frac{r}{2}}^{+}\leq\frac{7}{6}\epsilon_0.
\end{equation}
By (\ref{029}), the Hamiltonian vector field $X_{R_{2,*}}$ is a bounded map from $\mathfrak{H}_{r,\infty}$ into $\mathfrak{H}_{r,\infty}$. Taking
\begin{equation}\label{126}
I_n(0)=\frac{3}{4}e^{-2r\ln^{\sigma}\lfloor n\rfloor},
\end{equation}
we get an invariant torus $\mathcal{T}$ with frequency $(n^2+\omega_n)_{n\in\mathbb{Z}}$ for ${X}_{H_*}$. Finally, by $X_H\circ\Phi=D\Phi\cdot{X}_{H_*}$, $\Phi(\mathcal{T})$ is the desired invariant torus for the NLS (\ref{maineq}). Moreover, we deduce the torus $\Phi(\mathcal{T})$ is linearly stable from the fact that (\ref{123}) is a normal form of order 2 around the invariant torus.

\section{Appendix}
\subsection{Technical Lemmas}

\begin{lem}\label{122203}
Given any $\sigma>2$, there exists a constant $c(\sigma)>1 $ depending on $\sigma$ only such that
\begin{equation}\label{122201}
\ln^{\sigma}(x+y)-\ln^{\sigma}x-\frac12\ln^{\sigma}y\leq 0, \quad \mbox{for}\ c(\sigma)\leq y\leq x.
\end{equation}
\end{lem}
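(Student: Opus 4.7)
The goal is to show, for $\sigma > 2$ and suitable $c(\sigma)$, that
\[
F(x,y) := \ln^{\sigma}(x+y) - \ln^{\sigma} x - \tfrac{1}{2}\ln^{\sigma} y \leq 0
\quad\text{whenever } y \leq x,\ y \geq c(\sigma).
\]
My plan is to fix $y$ and treat $F$ as a function of $x$ on $[y,\infty)$, using monotonicity in $x$ to reduce to the diagonal case $x=y$, which is a one-variable problem.

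\textbf{Step 1: monotonicity in $x$.} For fixed $y$, differentiating gives
\[
\frac{\partial F}{\partial x} = \sigma\!\left(\frac{\ln^{\sigma-1}(x+y)}{x+y} - \frac{\ln^{\sigma-1} x}{x}\right).
\]
So I would analyse the auxiliary function $g(t) = t^{-1}\ln^{\sigma-1}t$, whose derivative is
\[
g'(t) = \frac{\ln^{\sigma-2}t\bigl(\sigma-1 - \ln t\bigr)}{t^{2}},
\]
which is strictly negative as soon as $\ln t > \sigma - 1$, i.e.\ $t > e^{\sigma-1}$. Hence, provided $y \geq e^{\sigma-1}$, one has $g(x+y) < g(x)$ for all $x \geq y$, so $\partial_x F < 0$ on $[y,\infty)$. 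In particular $F(x,y) \leq F(y,y)$ for all $x \geq y$.

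\textbf{Step 2: the diagonal case.} It then suffices to ensure $F(y,y) \leq 0$, i.e.
\[
\ln^{\sigma}(2y) \leq \tfrac{3}{2}\ln^{\sigma} y,
\qquad\text{equivalently}\qquad
\Bigl(1 + \tfrac{\ln 2}{\ln y}\Bigr)^{\!\sigma} \leq \tfrac{3}{2}.
\]
This is equivalent to $\ln y \geq \ln 2 /\bigl((3/2)^{1/\sigma} - 1\bigr)$, which holds for all $y$ beyond some explicit threshold $y_{0}(\sigma)$.

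\textbf{Step 3: choice of $c(\sigma)$.} I would simply set
\[
c(\sigma) = \max\Bigl\{e^{\sigma-1},\ \exp\!\Bigl(\tfrac{\ln 2}{(3/2)^{1/\sigma}-1}\Bigr)\Bigr\},
\]
so that both Step 1 and Step 2 apply for $y \geq c(\sigma)$, and then for any $x \geq y \geq c(\sigma)$ one has $F(x,y) \leq F(y,y) \leq 0$.

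The argument is essentially routine calculus; there is no genuine obstacle. The one point worth flagging is that the condition $\sigma > 2$ is not really needed for this lemma in isolation (the proof goes through for any $\sigma > 1$, since the monotonicity of $g$ only requires $\sigma > 1$ and the bound $(1+\ln 2/\ln y)^{\sigma}\leq 3/2$ can be arranged for any $\sigma>0$); the hypothesis $\sigma>2$ is inherited from the global setting of the paper rather than forced by this inequality, so I would not try to weaken it here.
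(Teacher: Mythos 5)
Your proof is correct. The paper itself does not prove Lemma \ref{122203}; it defers to Lemma 4.1 of \cite{cong2024}, so there is no in-text argument to compare against. Your argument is a clean two-step reduction: (i) for fixed $y\geq e^{\sigma-1}$ the map $t\mapsto t^{-1}\ln^{\sigma-1}t$ is strictly decreasing on $[e^{\sigma-1},\infty)$, so $\partial_x F<0$ and $F(x,y)\leq F(y,y)$ for $x\geq y$; (ii) $F(y,y)=\ln^{\sigma}(2y)-\tfrac{3}{2}\ln^{\sigma}y\leq 0$ once $\ln y\geq \ln 2/\bigl((3/2)^{1/\sigma}-1\bigr)$. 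Both the derivative computations and the algebraic equivalences check out, and $c(\sigma)=\max\bigl\{e^{\sigma-1},\ \exp\bigl(\ln 2/((3/2)^{1/\sigma}-1)\bigr)\bigr\}$ is explicit and $>1$. Your side remark is also right: the argument only uses $\sigma>1$ (and even $\sigma>0$ with the harmless extra requirement $t>1$), so $\sigma>2$ is a global hypothesis of the paper rather than a constraint imposed by this inequality.
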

\begin{proof}
The proof of this lemma see the proof of Lemma 4.1 in \cite{cong2024}.
\end{proof}

\begin{lem}\label{a1}Let $\theta\in(0,1)$ and $k_n,k'_n\in\mathbb{N},|{\widetilde{V}}_n|\leq2\ \mbox{for}\ \forall \ n\in\mathbb{Z}$.
Assume further
\begin{equation}
\label{0004}\left|\sum_{n\in\mathbb{Z}}(k_n-k_n')(n^2+ \widetilde{V}_n)\right|\leq1.
\end{equation}
Then one has
\begin{equation}\label{0006}
\sum_{n\in\mathbb{Z}}|k_n-k_n'|\ln^{\sigma}\lfloor n\rfloor \leq 3 \cdot 4^{\sigma}\left(\sum_{i\geq3}\ln^{\sigma}\lfloor n_i\rfloor+ \ln^{\sigma}\lfloor m^{*}(k,k')\rfloor\right),
\end{equation}
where $(n_i)_{i\geq1}, |n_1|\geq|n_2|\geq|n_3|\geq\cdots$, denote the system \{$n$: $n$ is repeated $k_n+k'_n$ times\}.
\end{lem}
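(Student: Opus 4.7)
The plan is to carry out a Bourgain-style small-divisor estimate adapted to the presence of the momentum term $m^*(k,k')$ that arises from the non-momentum-conserving Hamiltonian (\ref{003}). First I would assign signs $\mu_i\in\{\pm 1\}$ to each copy in the multiset $(n_i)_{i\ge 1}$ ($+1$ for copies from the $k$-part, $-1$ for copies from the $k'$-part), so that
\[
\sum_i \mu_i n_i=m(k,k'),\qquad \sum_i \mu_i n_i^2=\sum_n(k_n-k'_n)n^2.
\]
Using $|\widetilde{V}_n|\le 2$, the hypothesis (\ref{0004}) gives $\bigl|\sum_i\mu_i n_i^2\bigr|\le 1+2S$ where $S:=\sum_n|k_n-k'_n|$. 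Since $|k_n-k'_n|\le k_n+k'_n$ one has $\sum_n|k_n-k'_n|\ln^\sigma\lfloor n\rfloor\le\sum_i\ln^\sigma\lfloor n_i\rfloor$, so the whole lemma reduces to
\[
\ln^\sigma\lfloor n_1\rfloor+\ln^\sigma\lfloor n_2\rfloor\le (3\cdot 4^\sigma-1)\Bigl(\sum_{i\ge 3}\ln^\sigma\lfloor n_i\rfloor+\ln^\sigma\lfloor m^*\rfloor\Bigr).
\]
Before the case analysis I would reduce to \textit{disjoint supports} $\mathrm{supp}\,k\cap\mathrm{supp}\,k'=\emptyset$ by iteratively removing cancellable pairs $(n_i,n_j)$ with $n_i=n_j$, $\mu_i=-\mu_j$; such removals preserve $m(k,k')$, the quadratic sum, and only diminish the two largest absolute values (the trivial subcase $n_1=n_2$ being absorbed by $|k_{n_1}-k'_{n_1}|\le k_{n_1}+k'_{n_1}-2$).

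With disjoint supports, I would split on the signs $(\mu_1,\mu_2)$ to produce a polynomial-scale bound for $|n_1|+|n_2|$. If $\mu_1=\mu_2$, momentum gives directly $|n_1|+|n_2|\le m^*+\sum_{i\ge 3}|n_i|$, since $\mu_1 n_1$ and $\mu_2 n_2$ share the same sign and dominate the remaining terms. If $\mu_1=-\mu_2$ and $|n_1|=|n_2|$ then necessarily $n_1=-n_2$ and momentum gives $2|n_1|\le m^*+\sum_{i\ge 3}|n_i|$. If $\mu_1=-\mu_2$ and $|n_1|>|n_2|$ strictly, the factorization
\[
(|n_1|-|n_2|)(|n_1|+|n_2|)=|n_1^2-n_2^2|\le 1+2S+\sum_{i\ge 3}n_i^2
\]
combined with $|n_1|-|n_2|\ge 1$ yields $|n_1|+|n_2|\le 1+2S+\sum_{i\ge 3}n_i^2$; bounding $n_i^2\le|n_3^*|\cdot|n_i|$ for $i\ge 3$ and rearranging produces a polynomial bound in $m^*$, $S$, and $\sum_{i\ge 3}|n_i|$.

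Finally, I would translate these polynomial bounds to the $\ln^\sigma$-level using Lemma \ref{122203} iteratively (exactly as in the proof of Lemma \ref{H1}), together with the elementary estimates $\ln^\sigma(a+b)\le 2^\sigma\max\{\ln^\sigma a,\ln^\sigma b\}$ and $\ln^\sigma(ab)\le 2^{\sigma-1}(\ln^\sigma a+\ln^\sigma b)$ valid for $a,b\ge c(\sigma)$. Two successive applications of such estimates are needed because the polynomial bound has degree two (from the $n_i^2$ term), which explains the appearance of the factor $4^\sigma=2^{2\sigma}$ in the final constant.

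The main obstacle is the last subcase, $\mu_1=-\mu_2$ with $|n_1|>|n_2|$ strictly: the quadratic relation naturally produces both the count $S$ and squared indices $n_i^2$ on the right, neither of which is directly present on the RHS of the target. The count $S$ is absorbed via the lower bound $\ln^\sigma\lfloor n\rfloor\ge\ln^\sigma c(\sigma)>0$, so that $\ln^\sigma S$ is controlled by $\sum_{i\ge 3}\ln^\sigma\lfloor n_i\rfloor$ up to a constant depending only on $\sigma$; the squares are handled by $|n_i|\le|n_3^*|$ for $i\ge 3$, giving $\ln^\sigma(n_i^2)\le 2^\sigma\ln^\sigma|n_i|$, which removes the square after taking $\ln^\sigma$. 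Tracking the constants carefully through all three subcases yields the stated bound with prefactor $3\cdot 4^\sigma$.
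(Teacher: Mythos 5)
Your overall structure mirrors the paper's: assign signs $\mu_i\in\{\pm1\}$, use the linear momentum identity $\sum_i\mu_i n_i=m(k,k')$ and the quadratic identity $\sum_i\mu_i n_i^2=\sum_n(k_n-k'_n)n^2$ (bounded via (\ref{0004}) and $|\widetilde V_n|\le 2$), split on $\mu_1=\pm\mu_2$, and then translate the resulting polynomial bound into $\ln^\sigma$-terms with Lemma~\ref{122203}, the factor $4^\sigma$ appearing because the quadratic bound forces you to pay $\ln^\sigma(\lfloor n_i\rfloor^2)=2^\sigma\ln^\sigma\lfloor n_i\rfloor$ once and a further $2^\sigma$ from a factor absorbing $3$. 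The reduction to disjoint supports by iteratively removing cancelling pairs is a harmless preprocessing step (the reduced system is a sub-multiset of the original, so $\sum_{i\ge 3}\ln^\sigma\lfloor \tilde n_i\rfloor\le\sum_{i\ge 3}\ln^\sigma\lfloor n_i\rfloor$, the momentum and (\ref{0004}) are invariant, and the LHS of (\ref{0006}) is unchanged), even though the paper handles the coincidence $n_1=n_2$ as a separate subcase instead.

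However, there is a genuine gap in your case $\mu_1=\mu_2$. You claim that ``momentum gives directly $|n_1|+|n_2|\le m^*+\sum_{i\ge 3}|n_i|$, since $\mu_1 n_1$ and $\mu_2 n_2$ share the same sign.'' This is false: when $\mu_1=\mu_2$, the products $\mu_1 n_1$ and $\mu_2 n_2$ share a sign only if $n_1$ and $n_2$ themselves have the same sign, and nothing in this case forces that. If $n_1>0>n_2$, the linear momentum relation bounds $|n_1+n_2|$, not $|n_1|+|n_2|$, and one can pick $n_1,n_2$ of opposite signs with $|n_1+n_2|$ tiny while $|n_1|+|n_2|$ is enormous — the linear equation alone is not enough. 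The paper avoids this trap by using the \emph{quadratic} relation in this case: when $\mu_2/\mu_1=1$, inequality (\ref{0009}) reads $n_1^2+n_2^2\le 2\sum_{i\ge 3}\lfloor n_i\rfloor^2$, and since the left side is a sum of two nonnegative terms, each of $n_1^2$ and $n_2^2$ is individually bounded regardless of signs, after which the $\ln^\sigma$-translation proceeds exactly as in the other cases. You should replace the linear-momentum argument in the $\mu_1=\mu_2$ case by this quadratic estimate; the rest of your proposal then goes through.
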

\begin{proof}
From the definition of $(n_i)_{i\geq1}$, there exists $(\mu_i)_{i\geq1}$ with $\mu_i\in\{-1,1\}$ such that
\begin{equation}
\label{0007}m(k,k')=\sum_{i\geq1}\mu_in_i,
\end{equation}
and
\begin{equation}
\label{0008}\sum_{n\in\mathbb{Z}}(k_n-k_n')n^2=\sum_{i\geq1}\mu_in_i^2.
\end{equation}
In view of (\ref{0004}), (\ref{0008}) and $|\widetilde{V}_n|\leq 2$,
one has
\begin{equation*}
\left|\sum_{i\geq1}\mu_in_i^2\right|\leq\left|\sum_{n\in\mathbb{Z}}(k_n-k_n') \widetilde{V}_n\right|+1\leq2\sum_{n\in\mathbb{Z}}(k_n+k_n')+1,
\end{equation*}which implies
\begin{equation}\label{0009}
\left|n_1^2+\left(\frac{\mu_2}{\mu_1}\right)n_2^2\right|\leq2\sum_{i\geq1}1+\sum_{i\geq3}n_i^2+1\leq \sum_{i\geq 3}(2+n_i^2)+5\leq2\sum_{i\geq3}\lfloor n_i\rfloor^2 .
\end{equation}
On the other hand,\ by (\ref{0007}),\ we obtain
\begin{equation}\label{0010}
\left|n_1+\left(\frac{\mu_2}{\mu_1}\right)n_2\right|\leq \sum_{i\geq 3}|n_i|+m^*(k,k').
\end{equation}
To prove the inequality (\ref{0006}),\ we will distinguish two cases:

\textbf{Case. 1.} $\frac{\mu_2}{\mu_1}=-1$.

\textbf{Subcase. 1.1.} $n_1=n_2$.

Then it is easy to show that
\begin{equation*}
\sum_{n\in\mathbb{Z}}|k_n-k_n'|\ln^{\sigma}\lfloor n\rfloor\leq \sum_{i\geq3}\ln^{\sigma}\lfloor n_i\rfloor\leq \sum_{i\geq3}\ln^{\sigma}\lfloor n_i\rfloor+\ln^{\sigma}\lfloor m^*(k,k')\rfloor.
\end{equation*}

\textbf{Subcase. 1.2.} $n_1\neq n_2$.

Then one has
\begin{eqnarray}
\nonumber|n_1-n_2|+|n_1+n_2|
&\leq&\nonumber|n_1-n_2|+|n_1^2-n_2^2|\\
& \leq&\nonumber\sum_{i\geq 3}|n_i|+m^*(k,k')+2\sum_{i\geq 3}\lfloor n_i\rfloor^2\\ \nonumber &&\quad \mbox{(in view of (\ref{0009}) and (\ref{0010}))}\\
\label{0011} &\leq&3\sum_{i\geq3}\lfloor n_i\rfloor^2+\lfloor m^{*}(k,k')\rfloor.
\end{eqnarray}
Hence one has
\begin{equation*}
\max\{|n_1|,|n_2|\}\leq \max\{|n_1-n_2|,|n_1+n_2|\}\leq 3\sum_{i\geq3}\lfloor n_i\rfloor^2+ \lfloor m^{*}(k,k')\rfloor^2,
\end{equation*}
and then
\begin{equation*}
\lfloor n_{1}\rfloor\leq 3\sum_{i\geq3}\lfloor n_i\rfloor^2+ \lfloor m^{*}(k,k')\rfloor^2.
\end{equation*}
For $j=1,2,$ one has
\begin{equation*}
\ln^{\sigma}\lfloor n_j\rfloor\leq \ln^{\sigma} \left(3\sum_{i\geq3}\lfloor n_i\rfloor^{2}+\lfloor m^{*}(k,k')\rfloor^2\right)\leq  2^{\sigma}\ln^{\sigma}\left(\sum_{i\geq3}\lfloor n_i\rfloor^{2}+\lfloor m^{*}(k,k')\rfloor^{2}\right).
\end{equation*}
Using (\ref{122201}) in Lemma \ref{122203} again and again, one has
\begin{equation}\label{0012}
\ln^{\sigma}\left(\sum_{i\geq3}\lfloor n_i\rfloor^{2}+\lfloor m^{*}(k,k')\rfloor^{2}\right)\leq 2^{\sigma}\left(\sum_{i\geq3}\ln^{\sigma}\lfloor n_i\rfloor+\ln^{\sigma}\lfloor m^{*}(k,k')\rfloor\right).
\end{equation}
Therefore we have
\begin{eqnarray}
\nonumber\sum_{n\in\mathbb{Z}}|k_n-k_n'|\ln^{\sigma}\lfloor n\rfloor
&\leq&\nonumber\sum_{n\in\mathbb{Z}}(k_n+k_n')\ln^{\sigma}\lfloor n\rfloor\\
&=&\nonumber\sum_{i\geq1}\ln^{\sigma}\lfloor n_i\rfloor\\
&\leq&\label{0013}3 \cdot 4^{\sigma}\left(\sum_{i\geq3}\ln^{\sigma}\lfloor n_i\rfloor+\ln^{\sigma}\lfloor m^{*}(k,k')\rfloor\right).
\end{eqnarray}
\textbf{Case. 2.} $\frac{\mu_2}{\mu_1}=1$.

In view of (\ref{0009}), one has
\begin{equation*}
n_1^2+n_2^2\leq 2\sum_{i\geq3}\lfloor n_i\rfloor^{2},
\end{equation*}which implies
\begin{equation*}
\lfloor n_j\rfloor \leq 2\sum_{i\geq3}\lfloor n_i\rfloor^{2},  \ \mbox{($j=1,2$)}.
\end{equation*}
Following the proof of (\ref{0013}), we have
\begin{equation*}
\sum_{n\in\mathbb{Z}}|k_n-k_n'|\ln^{\sigma}\lfloor n\rfloor \leq 3 \cdot 4^{\sigma}\left(\sum_{i\geq3}\ln^{\sigma}\lfloor n_i\rfloor+\ln^{\sigma}\lfloor m^{*}(k,k')\rfloor\right).
\end{equation*}
\end{proof}

\begin{lem}For $\sigma>2$ and $\delta\in(0,1)$, then we have
\begin{equation*}\label{042805}
\max_{x\geq0}\ \exp\left\{-\delta x^{\sigma}+x\right\}\leq \exp\left\{\left(\frac1{\delta}\right)^{\frac1{\sigma-1}}\right\}.
\end{equation*}
\end{lem}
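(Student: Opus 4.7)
The plan is elementary single-variable calculus: explicitly locate the maximizer of $f(x) = -\delta x^{\sigma} + x$ on $[0,\infty)$, evaluate $f$ there, and bound the resulting value by $(1/\delta)^{1/(\sigma-1)}$ before exponentiating.

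First I would observe that $f(0)=0$ and, since $\delta>0$ and $\sigma>2$, $f(x)\to -\infty$ as $x\to\infty$, so the supremum is attained at a unique interior critical point. Solving $f'(x) = 1 - \sigma\delta x^{\sigma-1} = 0$ gives
\[
x_{\ast} = (\sigma\delta)^{-1/(\sigma-1)},
\]
and using $\delta x_{\ast}^{\sigma-1} = 1/\sigma$ one computes
\[
f(x_{\ast}) \;=\; x_{\ast} - \delta x_{\ast}\cdot x_{\ast}^{\sigma-1} \;=\; x_{\ast}\!\left(1 - \tfrac{1}{\sigma}\right) \;=\; \frac{\sigma-1}{\sigma}\,(\sigma\delta)^{-1/(\sigma-1)}.
\]

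It then remains to verify the scalar inequality
\[
\frac{\sigma-1}{\sigma}\,(\sigma\delta)^{-1/(\sigma-1)} \;\leq\; \delta^{-1/(\sigma-1)},
\]
which, after dividing through by $\delta^{-1/(\sigma-1)}$, is equivalent to $\sigma-1 \leq \sigma^{\sigma/(\sigma-1)}$. For $\sigma>2$ the exponent $\sigma/(\sigma-1)$ exceeds $1$, so $\sigma^{\sigma/(\sigma-1)} \geq \sigma > \sigma-1$, and the inequality is immediate. Exponentiating the bound on $f(x_{\ast})$ then yields the claim. Since this is a one-variable calculus exercise, there is no serious obstacle; the only mild subtlety is ensuring the final numerical inequality holds uniformly for all $\sigma>2$, which the estimate $\sigma^{\sigma/(\sigma-1)}\geq\sigma$ handles with room to spare.
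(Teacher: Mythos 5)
Your proof is correct and is the natural one-variable calculus argument: locating the unique critical point $x_{\ast}=(\sigma\delta)^{-1/(\sigma-1)}$, computing $f(x_{\ast})=\tfrac{\sigma-1}{\sigma}\,x_{\ast}$, and reducing the claim to $\sigma-1\leq\sigma^{\sigma/(\sigma-1)}$, which holds since $\sigma/(\sigma-1)>1$. The paper defers to Lemma 4.4 of Cong (2024) rather than reproducing a proof, but there is really only one reasonable route here and yours is it; all steps check.
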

\begin{proof}
	The proof of this lemma see the proof of Lemma 4.4 in \cite{cong2024}.
\end{proof}

\begin{lem}\label{8.6}For $p\geq 1$ and $\delta\in (0,1)$, then one has
\begin{equation}\label{042805*}
\max_{x\geq0}\ x^{p}e^{-\delta x}\leq \left(\frac{p}{e\delta}\right)^p.
\end{equation}
\end{lem}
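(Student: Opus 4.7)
The plan is to proceed by a direct, elementary calculus argument: locate the unique interior critical point of $f(x) := x^p e^{-\delta x}$ on $[0,\infty)$, verify it is a global maximum, and evaluate $f$ there.

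First I would compute the derivative. Since $p \geq 1$, the function $f$ is differentiable on $(0,\infty)$ with
\[
f'(x) = p x^{p-1} e^{-\delta x} - \delta x^{p} e^{-\delta x} = x^{p-1} e^{-\delta x}\bigl(p - \delta x\bigr).
\]
Thus on $(0,\infty)$, $f'(x) = 0$ only at $x^* = p/\delta$. A sign check shows $f'>0$ on $(0, x^*)$ and $f'<0$ on $(x^*,\infty)$, so $f$ is strictly increasing then strictly decreasing, hence $x^*$ is the global maximum on $[0,\infty)$ (note $f(0)=0$ since $p \geq 1$, and $f(x)\to 0$ as $x\to\infty$).

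Finally I would substitute:
\[
f(x^*) = \left(\frac{p}{\delta}\right)^{p} e^{-\delta\cdot p/\delta} = \left(\frac{p}{\delta}\right)^{p} e^{-p} = \left(\frac{p}{e\delta}\right)^{p},
\]
which is exactly the claimed bound (with equality rather than inequality). There is no real obstacle here; the only point to be mildly careful about is the boundary behavior at $x=0$ when $p=1$, where $f'(0^+)=1 > 0$ so no maximum occurs there, and the decay as $x\to\infty$, both of which are immediate.
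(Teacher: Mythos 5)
Your proof is correct, and it is the standard elementary calculus argument for this estimate. The paper does not present its own proof but instead cites Lemma~4.5 of \cite{cong2024}; the argument there is the same single-variable maximization you carry out, so there is nothing to add.
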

\begin{proof}
	The proof of this lemma see the proof of Lemma 4.5 in \cite{cong2024}.
\end{proof}

\begin{lem}\label{lem2}
For $\sigma>2$ and $\delta\in(0,1)$, then we have
\begin{equation}\label{0418011}
\sum_{j\geq 1}e^{-\delta \ln^{\sigma} j}\leq \frac{6}\delta\cdot \exp\left\{\left(\frac1{\delta}\right)^{\frac1{\sigma-1}}\right\}.
\end{equation}
\end{lem}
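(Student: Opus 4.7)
The plan is to compare the sum with an integral and then split the integral at a carefully chosen point so that Lemma (on $\max_{x\geq 0}e^{-\delta x^{\sigma}+x}$) can do the heavy lifting.

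First I would observe that the function $x\mapsto e^{-\delta\ln^{\sigma}x}$ is monotonically decreasing on $[1,\infty)$: for $x>1$ its derivative equals $-\delta\sigma\ln^{\sigma-1}(x)/x\cdot e^{-\delta\ln^{\sigma}x}<0$. Since the $j=1$ term contributes exactly $1$, the integral comparison gives
\[
\sum_{j\geq 1}e^{-\delta\ln^{\sigma}j}\leq 1+\int_{1}^{\infty}e^{-\delta\ln^{\sigma}x}\,dx.
\]
The substitution $y=\ln x$, $dx=e^{y}\,dy$ transforms the integral into
\[
\int_{0}^{\infty}e^{y-\delta y^{\sigma}}\,dy,
\]
which is precisely the kind of expression Lemma~\ref{8.6}'s predecessor (i.e.\ the $\max_{x\geq 0}e^{-\delta x^{\sigma}+x}\leq\exp\{(1/\delta)^{1/(\sigma-1)}\}$ bound) is designed to control.

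Next I would split this integral at $y_{1}:=(2/\delta)^{1/(\sigma-1)}$. For $y\geq y_{1}$ one has $\delta y^{\sigma-1}\geq 2$, so $y-\delta y^{\sigma}=y(1-\delta y^{\sigma-1})\leq -y$, and therefore
\[
\int_{y_{1}}^{\infty}e^{y-\delta y^{\sigma}}\,dy\leq\int_{y_{1}}^{\infty}e^{-y}\,dy=e^{-y_{1}}\leq 1.
\]
On the bounded interval $[0,y_{1}]$ I would just pull the sup out, invoking the cited lemma:
\[
\int_{0}^{y_{1}}e^{y-\delta y^{\sigma}}\,dy\leq y_{1}\cdot\max_{y\geq 0}e^{y-\delta y^{\sigma}}\leq y_{1}\cdot\exp\left\{(1/\delta)^{1/(\sigma-1)}\right\}.
\]

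Finally I would clean up the constants. Since $\sigma>2$ gives $1/(\sigma-1)<1$, and $\delta\in(0,1)$ gives $1/\delta>1$, one has $(2/\delta)^{1/(\sigma-1)}\leq 2/\delta$, so
\[
\sum_{j\geq 1}e^{-\delta\ln^{\sigma}j}\leq 2+\frac{2}{\delta}\exp\left\{(1/\delta)^{1/(\sigma-1)}\right\}\leq\frac{6}{\delta}\exp\left\{(1/\delta)^{1/(\sigma-1)}\right\},
\]
where the last step uses $2\leq 2/\delta\leq (2/\delta)\exp\{(1/\delta)^{1/(\sigma-1)}\}$. No step is really an obstacle; the only mildly tricky ingredient is choosing the split point $y_{1}=(2/\delta)^{1/(\sigma-1)}$ large enough that the tail beats $e^{-y}$ while the head is still short enough that the factor $y_{1}$ can be absorbed into the polynomial prefactor $6/\delta$.
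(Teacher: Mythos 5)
Your proof is correct, and the argument is complete and carefully checked: the integral comparison, the change of variables $y=\ln x$, the split at $y_1=(2/\delta)^{1/(\sigma-1)}$ so that the tail satisfies $e^{y-\delta y^\sigma}\le e^{-y}$, the invocation of the bound $\max_{y\ge 0}e^{y-\delta y^\sigma}\le\exp\{(1/\delta)^{1/(\sigma-1)}\}$ from the preceding lemma, and the final constant bookkeeping ($y_1\le 2/\delta$ and $2\le(4/\delta)\exp\{\cdots\}$) all hold as written.

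The paper itself does not print a proof of this lemma; it merely refers the reader to Lemma~4.6 of Cong (2024). So there is no in-text argument to compare yours against. Your route — sum-to-integral, exponential substitution, splitting the $y$-axis at the point where $\delta y^{\sigma-1}\ge 2$ forces decay, and then appealing to the recorded sup bound for the finite piece — is the natural one, and it is self-contained modulo the cited max-inequality. If anything, you prove the slightly sharper bound $2 + (2/\delta)\exp\{(1/\delta)^{1/(\sigma-1)}\}\le (4/\delta)\exp\{(1/\delta)^{1/(\sigma-1)}\}$ with the factor $4$ in place of $6$, so there is room to spare.
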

\begin{proof}
	The proof of this lemma see the proof of Lemma 4.6 in \cite{cong2024}.
\end{proof}

\begin{lem}\label{a3}
For $\sigma>2$ and $\delta\in(0,1)$,  we have the following inequality
\begin{equation}\label{041809}
\sum_{a\in\mathbb{N}^{\mathbb{Z}}}e^{-\delta\sum_{ n\in\mathbb{Z}}a_{n}\ln^{\sigma}\lfloor n\rfloor}\leq\prod_{ n\in\mathbb{Z}}\frac{1}{1-e^{-\delta \ln^{\sigma}\lfloor n\rfloor}}.
\end{equation}
\end{lem}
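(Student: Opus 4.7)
The plan is to prove the identity (with equality, in fact) by recognizing the left-hand side as the standard multiplicative expansion of an infinite product, and to reduce to a finite truncation so that the only interchange needed is justified by positivity/monotone convergence.

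First I would factor
\[
e^{-\delta\sum_{n\in\mathbb{Z}}a_{n}\ln^{\sigma}\lfloor n\rfloor}=\prod_{n\in\mathbb{Z}}e^{-\delta a_{n}\ln^{\sigma}\lfloor n\rfloor}.
\]
This is well defined because if $(a_n)_{n\in\mathbb{Z}}$ is not finitely supported then $\sum_n a_n\ln^\sigma\lfloor n\rfloor=+\infty$ (since $\lfloor n\rfloor\ge c(\sigma)>1$ forces $\ln^\sigma\lfloor n\rfloor>0$ for every $n$), so the corresponding summand on the left-hand side is zero and those $a$ may be ignored. It therefore suffices to sum over finitely supported $a\in\mathbb{N}^{\mathbb{Z}}$.

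Next I would exhaust $\mathbb{Z}$ by the finite windows $F_{N}=\{n\in\mathbb{Z}:|n|\le N\}$ and apply distributivity over a finite product to get
\[
\sum_{\substack{a\in\mathbb{N}^{\mathbb{Z}}\\ \mathrm{supp}\,a\subseteq F_{N}}}\ \prod_{n\in F_{N}}e^{-\delta a_{n}\ln^{\sigma}\lfloor n\rfloor}
\;=\;\prod_{n\in F_{N}}\ \sum_{a_{n}=0}^{\infty}e^{-\delta a_{n}\ln^{\sigma}\lfloor n\rfloor}
\;=\;\prod_{n\in F_{N}}\frac{1}{1-e^{-\delta\ln^{\sigma}\lfloor n\rfloor}},
\]
where the geometric summation in each factor converges because $\lfloor n\rfloor>1$ and $\delta>0$ imply $e^{-\delta\ln^{\sigma}\lfloor n\rfloor}\in(0,1)$.

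Finally I would let $N\to\infty$. Both sides are monotone nondecreasing in $N$ with all terms positive, so by monotone convergence the left-hand side increases to $\sum_{a\in\mathbb{N}^{\mathbb{Z}}}e^{-\delta\sum_{n}a_{n}\ln^{\sigma}\lfloor n\rfloor}$ and the right-hand side to $\prod_{n\in\mathbb{Z}}(1-e^{-\delta\ln^{\sigma}\lfloor n\rfloor})^{-1}$, which yields \eqref{041809}. The only subtle point is the exchange of the infinite sum with the infinite product, and this is settled by the monotone limit just described; no delicate estimate on the tails is required, because the claimed bound is allowed to be $+\infty$ without harm (the inequality is of course only useful when the product on the right is finite, which is the setting in which it is invoked in the preceding sections).
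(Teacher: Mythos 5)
Your argument is correct: factoring the exponential over $n$, summing the geometric series in each finitely supported coordinate, and passing to the limit over finite windows by monotone convergence is precisely the standard Euler-product expansion, and it is the same route taken in the proof the paper cites (Lemma~4.3 of \cite{cong2024}). You also correctly note that $\lfloor n\rfloor\ge c(\sigma)>1$ makes every factor a convergent geometric series and that non--finitely-supported $a$ contribute nothing, so the proposal needs no further repair.
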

\begin{proof}
The proof of this lemma see the proof of Lemma 4.3 in \cite{cong2024}.
\end{proof}

\begin{lem}\label{a5}
For $\sigma>2$ and $0<\delta\ll 1$, then we have
\begin{equation}\label{122401}
\prod_{ n\in\mathbb{Z}}\frac{1}{1-e^{-\delta \ln^{\sigma}\lfloor n\rfloor}}\leq\exp\left\{\left(\frac{18}{\delta}\right)\cdot \exp\left\{{\left(\frac{4}\delta \right)}^{\frac{1}{\sigma-1}}\right\}\right\}.
\end{equation}
\end{lem}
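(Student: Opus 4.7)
\textbf{Proof proposal for Lemma \ref{a5}.} The plan is to take the logarithm of the product and bound the resulting sum by splitting into two regimes according to the size of the exponent. Writing $y_n = \delta \ln^{\sigma}\lfloor n\rfloor$, we need
\[
\sum_{n\in\mathbb{Z}} \bigl(-\ln(1 - e^{-y_n})\bigr) \leq \left(\frac{18}{\delta}\right)\exp\!\left\{\left(\frac{4}{\delta}\right)^{\frac{1}{\sigma-1}}\right\}.
\]
The elementary input is that $1 - e^{-y} \geq \min\{y/2,\, 1/2\}$ for all $y > 0$. Consequently, $-\ln(1 - e^{-y}) \leq \ln(2/y)$ whenever $y \leq \ln 2$, and $-\ln(1 - e^{-y}) \leq 2 e^{-y}$ whenever $y \geq \ln 2$.

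The plan is to set the threshold at $M = \exp\bigl((\ln 2/\delta)^{1/\sigma}\bigr)$, so that $y_n \geq \ln 2$ precisely when $|n| > M$ (using that $\lfloor n\rfloor = |n|$ for $|n| \geq c(\sigma)$). For the ``low'' regime $|n| \leq M$, there are at most $2M + 1$ indices, and each term is bounded by $\ln(2/y_n) = \ln(1/\delta) - \sigma \ln\ln\lfloor n\rfloor + \ln 2$. Because $\lfloor n\rfloor \geq c(\sigma) > 1$, the factor $-\ln\ln\lfloor n\rfloor$ is uniformly bounded by a constant $C(\sigma)$, so the contribution of this regime is at most $(2M+1)\bigl(\ln(1/\delta) + C'(\sigma)\bigr)$, which, since $1/\sigma < 1/(\sigma-1)$, is much smaller than the target right-hand side for $\delta$ small.

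For the ``high'' regime $|n| > M$, I apply $-\ln(1-e^{-y_n}) \leq 2 e^{-y_n} = 2 e^{-\delta \ln^{\sigma}|n|}$ and invoke Lemma \ref{lem2} (inequality~(\ref{0418011})) directly on each sign, obtaining
\[
\sum_{|n| > M} 2 e^{-\delta \ln^{\sigma}|n|} \leq 4 \sum_{j \geq 1} e^{-\delta \ln^{\sigma} j} \leq \frac{24}{\delta} \exp\!\left\{\left(\frac{1}{\delta}\right)^{\frac{1}{\sigma-1}}\right\}.
\]
Since $4^{1/(\sigma-1)} > 1$ and hence $\exp\!\bigl\{(4/\delta)^{1/(\sigma-1)}\bigr\} \geq \exp\!\bigl\{(1/\delta)^{1/(\sigma-1)}\bigr\}$, this quantity is at most $(18/\delta)\exp\!\bigl\{(4/\delta)^{1/(\sigma-1)}\bigr\}$ provided $\delta$ is sufficiently small (so that the slack $4^{1/(\sigma-1)} > 1$ absorbs the ratio $24/18$ comfortably). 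Combining the two regimes yields the claimed inequality.

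The only non-routine step is bookkeeping: making sure the dominant contribution indeed comes from the ``high'' regime and that the slight numerical loss ($24$ vs.\ the target $18$, and $1$ vs.\ the target $4$ inside the inner exponent) is absorbed by the monotonicity of the exponentials when $\delta \ll 1$. No deep obstacle arises; the argument reduces to a careful application of the preceding technical lemma together with elementary estimates on $-\ln(1-e^{-y})$.
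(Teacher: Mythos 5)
Your argument is correct. Taking the logarithm, splitting the sum at the threshold $M=\exp\bigl((\ln 2/\delta)^{1/\sigma}\bigr)$ where $y_n=\delta\ln^{\sigma}\lfloor n\rfloor$ crosses $\ln 2$, bounding the low part by $O\bigl(M\,(\ln(1/\delta)+C(\sigma))\bigr)$ and the high part by $4\sum_{j\geq 1}e^{-\delta\ln^{\sigma}j}\leq\frac{24}{\delta}\exp\bigl\{(1/\delta)^{1/(\sigma-1)}\bigr\}$ via Lemma~\ref{lem2}, and then using the slack $4^{1/(\sigma-1)}>1$ in the inner exponent of the target to absorb both contributions for $\delta\ll 1$, gives the stated bound. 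One small point of phrasing: the quantity $-\ln\ln\lfloor n\rfloor$ is only bounded \emph{above} by a constant depending on $c(\sigma)$ (it tends to $-\infty$ as $|n|\to\infty$), but since you only use it as an upper bound, your estimate of the low regime is valid. The paper itself refers to Lemma~4.7 in \cite{cong2024}; your two-regime decomposition together with (\ref{0418011}) is the natural route and is essentially that argument.
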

\begin{proof}
The proof of this lemma see the proof of Lemma 4.7 in \cite{cong2024}..
\end{proof}
\begin{lem}
\label{b1}For $\sigma>2$, $\delta\in(0,1)$, $p=1,2$ and $a=(a_{n})_{ n\in\mathbb{Z}}\in\mathbb{N}^{\mathbb{Z}^d}$, then we have
\begin{equation}\label{042807}
\prod_{n\in\mathbb{Z}}\left(1+a_{n}^p\right)e^{-2\delta a_{n}\ln^{\sigma} \lfloor n\rfloor}\leq \exp\left\{3p\left(\frac{p}{\delta}\right)^{\frac 1{\sigma-1}}\cdot\exp\left\{\left(\frac1\delta\right)^{\frac1\sigma}\right\}
\right\}.
\end{equation}
\end{lem}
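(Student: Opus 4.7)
The plan is to bound each factor pointwise by $\Phi(\mu_n):=\sup_{a\geq 0}(1+a^{p})e^{-\mu_n a}$ where $\mu_n:=2\delta\ln^{\sigma}\lfloor n\rfloor$, and then to split the product over $n$ according to whether $\mu_n\geq 1$ (tail) or $\mu_n<1$ (head). For both $p=1$ and $p=2$, a direct derivative computation shows $a\mapsto(1+a^{p})e^{-\mu a}$ is monotone decreasing on $[0,\infty)$ once $\mu\geq 1$: for $p=1$, $\tfrac{d}{da}[(1+a)e^{-\mu a}]=(1-\mu-\mu a)e^{-\mu a}\leq 0$; for $p=2$, $\mu a^{2}-2a+\mu$ has no real positive root when $\mu>1$. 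Hence $\Phi(\mu_n)\leq 1$ on the tail, and the tail product contributes at most $1$. The threshold $\mu_n\geq 1$ corresponds to $\lfloor n\rfloor\geq N:=\lceil\exp\{(1/(2\delta))^{1/\sigma}\}\rceil$, so the head consists of at most $2N+1\leq 3\exp\{(1/\delta)^{1/\sigma}\}$ indices.

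For the head, I would apply Lemma \ref{8.6} to obtain the uniform bound
\[
\Phi(\mu_n)\leq 1+\sup_{a\geq 0}a^{p}e^{-\mu_n a}\leq 1+\Bigl(\frac{p}{e\mu_n}\Bigr)^{p}.
\]
Using $\ln(1+x)\leq\ln(2\max\{1,x\})$ together with the lower bound $\mu_n\geq 2\delta\ln^{\sigma}c(\sigma)$, this yields
\[
\ln\Phi(\mu_n)\leq \ln 2+p\,\ln\bigl(p/(e\mu_n)\bigr)\leq C(\sigma)+p\ln(p/\delta).
\]
The last step is to convert $p\ln(p/\delta)$ into the target power form. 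I would invoke the elementary inequality $\ln y\leq(\sigma-1)y^{1/(\sigma-1)}$ for $y\geq 1$ (a consequence of $e^{t}\geq t$ applied with $t=\ln y/(\sigma-1)$) to get a per-index bound of the shape $\ln\Phi(\mu_n)\leq C'(\sigma)\,p(p/\delta)^{1/(\sigma-1)}$, independent of $n$.

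Putting everything together, the logarithm of the product equals the sum over the head (the tail being trivially $\leq 0$), which is bounded by the cardinality of the head times the maximal per-index contribution, i.e.\ of order $\exp\{(1/\delta)^{1/\sigma}\}\cdot p(p/\delta)^{1/(\sigma-1)}$; exponentiating recovers (\ref{042807}). The principal obstacle is purely one of constant bookkeeping: the rough estimates above produce $\sigma$-dependent prefactors (coming from $c(\sigma)$, from $\ln(2x)=\ln 2+\ln x$, and from the exponent $(\sigma-1)$ in the ``$\ln\leq$ power'' inequality), which must be absorbed into the outer exponential $\exp\{(1/\delta)^{1/\sigma}\}$ to produce the clean universal prefactor $3p$. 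Since $\delta\in(0,1)$ and the right-hand side of the claim is already doubly exponential in $1/\delta$, this absorption is routine.
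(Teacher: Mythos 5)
Your skeleton — bound each factor by $\Phi(\mu_n):=\sup_{a\ge 0}(1+a^{p})e^{-\mu_n a}$ with $\mu_n=2\delta\ln^{\sigma}\lfloor n\rfloor$, show the tail factors with $\mu_n\ge 1$ contribute $\le 1$ by the derivative computation, and bound the head by cardinality times a per-index estimate via Lemma~\ref{8.6} — is internally consistent up through the per-index bound, and your monotonicity check for $p=1,2$ is correct. (One small caveat you should state: $\lfloor n\rfloor<N$ is equivalent to $|n|<N$ only when $N>c(\sigma)$; otherwise the head is empty, which is fine, but the observation should be made.)

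The genuine gap is the final ``absorption,'' which you assert is routine but is not. Following your estimates, the per-index bound is $\ln\Phi(\mu_n)\le \ln 2 + p\ln_{+}\bigl(p/(e\mu_n)\bigr)\le \ln 2+p\ln(p/\delta)\le \ln 2 + p(\sigma-1)(p/\delta)^{1/(\sigma-1)}$, where the elementary inequality $\ln y\le(\sigma-1)y^{1/(\sigma-1)}$ inevitably introduces the factor $\sigma-1>1$. The head cardinality is of order $\exp\{(1/(2\delta))^{1/\sigma}\}$, and the target right-hand side carries $\exp\{(1/\delta)^{1/\sigma}\}$; so the surplus factor $\sigma-1$ (more precisely, roughly $2\sigma-1$ once $\ln 2$ and $p\le 2$ are accounted for) must be absorbed into the ratio $\exp\{(1-(1/2)^{1/\sigma})(1/\delta)^{1/\sigma}\}$. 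This ratio is close to $1$ precisely when $\delta$ sits near the largest value for which the head is nonempty, namely $\delta\approx 1/(2\ln^{\sigma}c(\sigma))$, where the ratio degenerates to $\exp\{(2^{1/\sigma}-1)\ln c(\sigma)\}$. Whether this exceeds $(2\sigma-1)/p$ thus reduces to a lower bound $\ln c(\sigma)\ge \ln((2\sigma-1)/p)/(2^{1/\sigma}-1)$ on the constant $c(\sigma)$ from Lemma~\ref{122203}, which the paper defines only implicitly via a reference and for which no quantitative lower bound is available in the text. For instance, at $\sigma=3$ the necessary condition from the $x=y$ case of Lemma~\ref{122203} gives only $\ln c(3)\gtrsim 4.8$, whereas the absorption with your constants would need $\ln c(3)\gtrsim 6.2$. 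So the last step does not close as written; you would need either a sharper per-index estimate that avoids the spurious $\sigma-1$ (e.g.\ exploiting that $\mu_n$ increases with $|n|$ and summing rather than taking cardinality$\times$max), or an explicit lower bound on $c(\sigma)$ that is not part of the given data.
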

\begin{proof}
The proof of this lemma see the proof of Lemma 4.8 in \cite{cong2024}.
\end{proof}


\end{document}